\documentclass[12pt]{article}

\usepackage{amsmath}
\usepackage{amsthm}
\usepackage{amssymb}
\usepackage{amscd}
\usepackage{xspace}
\usepackage{verbatim}
\usepackage{geometry}
\usepackage{esint}
\usepackage{geometry}
\usepackage{fancyhdr}
\usepackage{xargs}
\usepackage{xifthen}
\usepackage{xcolor}

\newcommandx{\Set}[2][2=]{
    \ifthenelse{\isempty{#2}}
        {\left\{ {#1} \right\}}
        {\left\{  {#1}  \, \middle| \, {#2} \right\}}
}

\newtheorem{theorem}{Theorem}[section]
\newtheorem*{theorem*}{Theorem}
\newtheorem{corollary}[theorem]{Corollary}
\newtheorem{lemma}[theorem]{Lemma}
\newtheorem{proposition}[theorem]{Proposition}
\newtheorem*{proposition*}{Proposition}

\theoremstyle{definition}
\newtheorem{definition}[theorem]{Definition}
\theoremstyle{remark}
\newtheorem{remark}{Remark}[section]
\numberwithin{equation}{section}

\newcommand{\ov}{\overline}

\newcommand{\e}{\varepsilon}

\newcommand{\N}{\mathbb N}

\newcommand{\R}{\mathbb{R}}

\newcommand{\er}{\eqref}

 \DeclareMathOperator{\dist}{dist}

\DeclareMathOperator{\cp}{cap}

\DeclareMathOperator{\diam}{diam}

\newcommandx{\norm}[1][1=\cdot]{\left|{#1}\right|}
\newcommandx{\supnorm}[2][1=\cdot,2=]{\nnorm[#1]_{\infty,#2}}
\newcommandx{\nnorm}[1][1=\cdot]{\left|\left|{#1}\right|\right|}

\date{\today}

\begin{document}
\title{Fine properties of Besov functions $B^r_{q,\infty}$ in metric spaces}
\author{Paz Hashash and Arkady Poliakovsky}
\date{\today}
\maketitle

\begin{abstract}
Let $X$ be a metric space and $\mu$ an $s$-regular Ahlfors measure. Let $Y$ be a metric space. We prove that for Besov functions $u \in B^r_{q,\infty}(X,\mu;Y)$, every point is a {\it general average Lebesgue point} of $u$ outside a $\sigma$-finite set with respect to the Hausdorff measure $\mathcal{H}^{s - rq}$. The proof is based on density-type estimates involving Hausdorff measure. In addition, we prove that for functions $u$ in the fractional Sobolev space $W^{r,q}(X,\mu;Y)$, almost every point with respect to $\mathcal{H}^{s - rq}$ is an {\it average Lebesgue point} of $u$. Finally, if $Y$ is also complete, we prove that for $u \in B^r_{q,\infty}(X,\mu;Y)$, almost every point is a {\it Lebesgue point} outside a set of Hausdorff dimension at most $s - rq$.
\end{abstract}

\tableofcontents
\maketitle

\section{Introduction}
The study of fine properties of functions in Sobolev and Besov spaces has played a central role in real and functional analysis, particularly in understanding the behavior of functions at small scales and in non-smooth settings. One of the foundational results in this direction is the classical Lebesgue differentiation theorem, which asserts that almost every point of a locally integrable function is a Lebesgue point. This result connects the integral properties of a function to its pointwise behavior and serves as a cornerstone in modern analysis. One can find proof of Lebesgue differentiation theorem in \cite{EG,F}.

In the context of function spaces that encode smoothness, such as Sobolev, $BV$, or Besov spaces, whose elements are locally integrable and therefore have a set of non-Lebesgue points of measure zero with respect to the underlying measure (Lebesgue measure in Euclidean space), it is natural to ask whether the set of non-Lebesgue points is in fact smaller than merely having measure zero, due to the additional regularity. If so, one may ask how small this set can be. In other words, if a function is not only locally integrable but also possesses some degree of regularity, does it follow that the set of its non-Lebesgue points becomes smaller?

One importance of such knowledge arises in the study of traces of measurable functions, for example in $\mathbb{R}^N$. Suppose we have a measurable function on a Euclidean domain, such as a solution to a partial differential equation, which we need to restrict (trace) to the boundary of the domain or to a surface of dimension less than $N$. In general, this restriction is meaningless unless we know how small the set of non-Lebesgue points is, for example in terms of capacity, Hausdorff measure, or Hausdorff dimension.

For instance, it is known that Sobolev functions $u \in W^{1,p}(\R^N)$ have Lebesgue points outside a set of $\cp_p$-measure zero \cite{EG,FedererZiemer1972,Kinnunen,Ziemer}; a similar result holds for functions in Bessel potential spaces $L^{\alpha,p}(\R^N)$ \cite{AdamsHelberg2012}; for $BV(\R^N)$ functions, every point outside a countably $\mathcal{H}^{N-1}$-rectifiable set is a Lebesgue point \cite{AFP,EG,Ziemer}; for Besov functions $u \in B^r_{q,\infty}(\R^N)$, it is known that the set of points that are not general average Lebesgue points is $\sigma$-finite with respect to $\mathcal{H}^{N-rq}$ \cite{HP2024}; and for fractional Sobolev functions $u \in W^{r,q}(\R^N)$, $r \in (0,1)$, almost every point is an average Lebesgue point with respect to $\mathcal{H}^{N-rq}$ \cite{HP2024}.

Metric measure spaces equipped with a Borel measure satisfying certain regularity assumptions, such as Ahlfors regularity, provide a general framework to extend many classical results of analysis beyond the Euclidean space. In such settings, questions about the local behavior of functions become more subtle due to the lack of linear structure. Nevertheless, a considerable amount of work in the past decades has aimed to extend Sobolev-type theories, including differentiation and capacity estimates, to such spaces, see for example \cite{Shanmugalingam2000,JuhaHeinonen,
HeinonenKoskelaShanmugalingam2015,
Hajlasz2003,KinnunenLatvala2002}.

In this paper, we study the fine properties of functions $u \colon X \to Y$ belonging to the Besov spaces $B^{r}_{q,\infty}(X,\mu;Y)$ and to the fractional Sobolev spaces $W^{r,q}(X,\mu;Y)$, where $X$ is a metric space endowed with an $s$--Ahlfors regular measure $\mu$. The target space $Y$ is, depending on the result, either an arbitrary metric space or a complete metric space. 

By \emph{fine properties} we mean properties that hold almost everywhere with respect to measuring devices that are typically finer than the underlying measure of the space; in the present setting, these finer devices are Hausdorff measures and Hausdorff dimension. Our main focus is the characterization of the set of Lebesgue points of such functions ($W^{r,q}$,$B^{r}_{q,\infty}$).

We now give precise definitions and statements of the main results of this work. 
\\ 
---
\\
Let $X$ be a set and let $\phi$ be a measure on it. Let $(Z,|\cdot|)$ be a normed space. For $p \in (0,\infty)$, the Lebesgue space $L^p(X,\phi;Z)$ is defined to be the space of $\phi$--measurable functions $u : X \to Z$ such that $\int_X |u|^p\, d\phi < \infty$.
If $X$ is also a metric space and $\phi$ is Borel, we define the local space $L^p_{\text{loc}}(X,\phi;Z)$ to be the space of $\phi$--measurable functions $u : X \to Z$ such that $u \in L^p(B,\phi;Z)$ for every closed ball $B \subset X$. In case $Z=\R$, we write $L^p(X,\phi)$, $L^p_{\text{loc}}(X,\phi)$.

The range space \(Y\) of functions $u:X\to Y$ in all our main results will be one of the following two types: a general metric space, or a complete one. For dealing with the notions of general average Lebesgue points and average Lebesgue points, it is sufficient to assume that \(Y\) is a (general) metric space. However, for treating (usual) Lebesgue points, a stronger structure on the target space is required. In results where \(Y\) is a (general) metric space, we use only the metric \(\rho\) to define the function spaces. In contrast, for the study of Lebesgue points we assume that the target space \(Y\) is also complete.

We now give the definitions of Lebesgue points, average Lebesgue points, and general average Lebesgue points.

\begin{definition}[Lebesgue points, Average Lebesgue points, and general average Lebesgue points]
\label{def:lower average Lebesgue point}
Let $X$ be a metric space and let $\mu$ be a Borel measure\footnote{Throughout, by a Borel measure we mean a positive outer measure for which all Borel sets are measurable.} on $X$ such that every closed ball has positive and finite $\mu$-measure. Let $q \in (0,\infty)$. Let $(Y,\rho)$ be a metric space. Assume that $u:X\to Y$ is $\mu$--measurable, and  $F_u \in L^q_{\text{loc}}(X\times X,\mu\times \mu)$, $F_u(z,v):=\rho(u(z),u(v))$. 
\begin{enumerate}
\item A point $x \in X$ is called a \emph{Lebesgue point} of $u$ if there exists an element $u^*(x)\in Y$ such that
\begin{equation}
\lim_{\e \to 0^+}\fint_{B_{\e}(x)}\rho\left(u(z),u^*(x)\right)^q d\mu(z) = 0.
\end{equation}
    \item A point $x \in X$ is called an \emph{average Lebesgue point} of $u$ if
\begin{equation}
\lim_{\e \to 0^+} \fint_{B_{\e}(x)}\fint_{B_{\e}(x)}\rho\left(u(z),u(v)\right)^q d\mu(v)d\mu(z) = 0.
\end{equation} 
\item A point $x \in X$ is called a \emph{general average Lebesgue point} of $u$ if
\begin{equation}
\liminf_{\e \to 0^+} \fint_{B_{\e}(x)}\fint_{B_{\e}(x)}\rho\left(u(z),u(v)\right)^q d\mu(v)d\mu(z) = 0.
\end{equation}   
\end{enumerate}
\end{definition}
Notice that in case $Y=\R^N$ and $q\geq 1$, using triangle inequality and Jensen's inequality, we have for average Lebesgue points $x$
\begin{equation}
\label{eq:equivalence-average Lebesgue}
\lim_{\e \to 0^+} \fint_{B_{\e}(x)} |u(z) - u_{B_{\e}(x)}|^q \, d\mu(z) = 0\, \Longleftrightarrow\, \lim_{\e \to 0^+} \fint_{B_{\e}(x)}\fint_{B_{\e}(x)}|u(z) - u(v)|^q d\mu(v)d\mu(z) = 0.
\end{equation}
That is, the average oscillation of $u$ around $x$ tends to zero as the radius of the ball tends to zero, where
\begin{equation}
u_{B_{\e}(x)}:=\fint_{B_\e(x)}u(v)d\mu(v)=\frac{1}{\mu(B_\e(x))}\int_{B_\e(x)}u(v)d\mu(v).
\end{equation}
Similarly for general average Lebesgue points $x$
\begin{equation}
\label{eq:equivalence-general average Lebesgue}
\liminf_{\e \to 0^+} \fint_{B_{\e}(x)} |u(z) - u_{B_{\e}(x)}|^q \, d\mu(z) = 0 \, \Longleftrightarrow\,  \liminf_{\e \to 0^+} \fint_{B_{\e}(x)}\fint_{B_{\e}(x)}|u(z) - u(v)|^q d\mu(v)d\mu(z) = 0.
\end{equation}
In other words, along some sequence of radii $\varepsilon_k \to 0$, the average oscillation of $u$ on $B_{\varepsilon_k}(x)$ tends to zero.

The following definition of Besov functions on metric spaces is inspired by the definition given in \cite{GPKS2010}, adapted to the setting of functions with values in a metric space:
\begin{definition}[Besov space $B^r_{q,\infty}(X,\mu;Y)$]
\label{def:Besov functions}
Let \(X\) be a metric space, and let \(\mu\) be a Borel measure on \(X\) such that
\(0 < \mu(B) < \infty\) for every closed ball \(B \subset X\).
Let \((Y,\rho)\) be a metric space.
Let \(0< q < \infty\) and \(0 \le r \le 1\). Let \(u : X \to Y\) be a \(\mu\)--measurable function.
We say that \(u \in B^{r}_{q,\infty}(X,\mu;Y)\) if and only if
\(F_u \in L^q_{\mathrm{loc}}(X \times X,\mu \times \mu)\), where
\(F_u(x,z) := \rho(u(x),u(z))\), and
\begin{equation}
\|u\|_{B^{r}_{q,\infty}(X,\mu;Y)}
:= \sup_{t > 0}
\left(
\int_X \fint_{B_t(x)}
\frac{\rho(u(x),u(z))^q}{t^{rq}}
\, d\mu(z)\, d\mu(x)
\right)^{\frac{1}{q}}
< \infty.
\end{equation}
\end{definition}

\begin{remark}
Definition~\ref{def:Besov functions} coincides with the usual definition of Besov functions in the class $B^r_{q,\infty}$ in the case $r<1$. In the case $r=1$, it naturally leads to the definitions of the space of functions of bounded variation $BV$ and of the Sobolev space $W^{1,q}$ between metric spaces. In the case $r=1$ and $q=1$, the space 
\[
BV(X,\mu;Y):=B^{1}_{1,\infty}(X,\mu;Y)
\]
is called the space of functions of bounded variation.  

In the case $r=1$ and $q>1$, the space 
\[
W^{1,q}(X,\mu;Y):=B^{1}_{q,\infty}(X,\mu;Y)
\]
is called the Sobolev space.
\end{remark}

In order to state our main results we need the following concept of regularity of a measure:
\begin{definition}[Ahlfors $s$-regularity]
Let $X$ be a metric space, and let $\mu$ be a Borel measure on $X$. We say that $\mu$ is \emph{Ahlfors $s$-regular} for some $s\in (0,\infty)$ if there exist constants $c, C > 0$ such that for all $x \in  X$ and all $0 < r < \diam(X)$,
\[
c\, r^s \leq \mu(B_r(x)) \leq C\, r^s.
\]
The number $s$ is called the \emph{Ahlfors dimension} of the measure $\mu$.
\end{definition}

Our first result is the following theorem:

\begin{theorem}[General average Lebesgue points of $B^r_{q,\infty}$]
\label{thm:sigma finiteness of limiting average with respect to Hausdorff measure,intro}
Let $X$ be a metric space, and let $\mu$ be an Ahlfors $s$-regular measure on $X$ for some $s\in (0,\infty)$. Let $0\leq r\leq 1$, $0<q<\infty$ be such that $rq\leq s$. Let $Y$ be a metric space. Let $u\in B^r_{q,\infty}(X,\mu;Y)$. 
Then, there exists a $\mathcal{H}^{s-rq}$
$\sigma$-finite set $D\subset X$, such that every point $x\in X\setminus D$ is a general average Lebesgue point of $u$ as defined in Definition \ref{def:lower average Lebesgue point}.
\end{theorem}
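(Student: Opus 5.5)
We reduce to a density-type estimate for an auxiliary measure. Set $M:=\|u\|^q_{B^r_{q,\infty}}$, and for each scale $t>0$ define the nonnegative function
\[
g_t(x):=\fint_{B_t(x)}\frac{\rho(u(x),u(z))^q}{t^{rq}}\,d\mu(z),
\]
so that $\int_X g_t\,d\mu\le M$ for every $t$. We first compare the double average on a ball with a single integral of $g$. If $z,v\in B_\e(x)$, the triangle inequality together with $(a+b)^q\le C_q(a^q+b^q)$ gives
\[
\rho(u(z),u(v))^q\le C_q\bigl(\rho(u(z),u(w))^q+\rho(u(w),u(v))^q\bigr)
\]
for any $w\in B_\e(x)$. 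Averaging over $w\in B_\e(x)$ (or, more simply, noting $B_\e(x)\subset B_{2\e}(z)$ and using Ahlfors regularity $\mu(B_{2\e}(z))\le C'\mu(B_\e(x))$) we obtain
\[
\fint_{B_\e(x)}\fint_{B_\e(x)}\rho(u(z),u(v))^q\,d\mu(v)\,d\mu(z)\le C\,\e^{rq}\fint_{B_\e(x)}g_{2\e}(z)\,d\mu(z)\le C\,\e^{rq-s}\int_{B_\e(x)}g_{2\e}\,d\mu .
\]
Consequently, if $x$ is not a general average Lebesgue point, there are $\delta>0$ and $\e_0>0$ such that $\int_{B_\e(x)}g_{2\e}\,d\mu\ge c\,\delta\,\e^{s-rq}$ for all $\e<\e_0$.

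The difficulty is that the function $g_{2\e}$ changes with $\e$, so no single finite measure $\nu$ is available to which the standard density theorem ($\nu(B_\e(x))\ge \lambda\e^{s-rq}$ on $E$ implies $\mathcal H^{s-rq}(E)\le C\nu(X)/\lambda$) could be applied. We handle this with a Vitali-type covering argument performed at a single dyadic scale, which is the main obstacle. Fix $k\in\N$ and $\delta=1/m$, and let
\[
E_{k,m}:=\Set{x}{\int_{B_\e(x)}g_{2\e}\,d\mu\ge c\,\e^{s-rq}/m\ \text{ for all }\e<2^{-k}}.
\]
The set of points that are not general average Lebesgue points equals $\bigcup_{k,m}E_{k,m}$, so it suffices to show that each $E_{k,m}$ has finite $\mathcal H^{s-rq}$ measure; then $D:=\bigcup_{k,m}E_{k,m}$ is $\sigma$-finite. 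Given $\eta<2^{-k}$, we use the same radius $\e=\eta$ for every point of $E_{k,m}$. We choose a maximal $2\eta$-separated subset $\{x_i\}$ of $E_{k,m}$; the balls $B_\eta(x_i)$ are then pairwise disjoint and the balls $B_{4\eta}(x_i)$ cover $E_{k,m}$. Because the scale $\eta$ is common to all these points, the function $g_{2\eta}$ is also common to all of them, and therefore
\[
\#\{i\}\cdot c\,\eta^{s-rq}/m\le\sum_i\int_{B_\eta(x_i)}g_{2\eta}\,d\mu\le\int_X g_{2\eta}\,d\mu\le M .
\]
Hence $\mathcal H^{s-rq}_{8\eta}(E_{k,m})\le\sum_i(8\eta)^{s-rq}\le C\,m\,M$. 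Letting $\eta\to0$ yields $\mathcal H^{s-rq}(E_{k,m})\le C m M<\infty$. Here the separated set is countable because $\mu$ is locally finite and doubling; if needed, we first localize to bounded subsets of $X$. The definition of $E_{k,m}$ uses "for all $\e<2^{-k}$", and this is exactly what makes a single common scale usable; this uniformity is what the liminf (as opposed to the lim) structure permits. For this reason the argument yields only general average Lebesgue points.

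We still have to check two routine points. First, measurability of the sets $E_{k,m}$ is not needed, since we are estimating Hausdorff outer measure. Second, in the case $rq=s$ the argument gives counting measure estimates, which is consistent with $\mathcal H^0$. Finally, radii $\e$ with $2\e\ge\diam X$ are irrelevant, because only small $\e$ enter the definition of $E_{k,m}$.
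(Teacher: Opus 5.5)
Your proof is correct, and it is more elementary than the paper's route, although the core device is shared.

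\textbf{The paper's route.} The paper derives this theorem from parts~1--2 of Theorem~\ref{thm:Besov constant controls double average integral} with $\zeta(A)=(\diam A)^{s-rq}$. It introduces measures $\mu_n$ whose density is, up to constants, your $g_{2\e_n}$. It then uses a variable-radius Vitali covering (Lemma~\ref{lem:estimate Hausdorff measure of sup of measures of balls}) and the level-set density lemma (Lemma~\ref{lem:the upper density lemma for a sequence of Radon measures}). Next comes an approximation by simple functions, which needs inner regularity of $\mathcal H^{s-rq}$ and Borel measurability of the double average (Lemma~\ref{lem:the upper density lemma for a sequence of Radon measures1}, Corollary~\ref{cor:double average is Borel}). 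Together these give the strong-type bound $\int_X \liminf_n \bigl(\fint_{B_{\e_n}(x)}\fint_{B_{\e_n}(x)}\rho(u(z),u(y))^q\,d\mu(z)\,d\mu(y)\bigr)\,d\mathcal H^{s-rq}(x)\le C\|u\|^q_{B^r_{q,\infty}(X,\mu;Y)}$. Chebyshev then gives $\sigma$-finiteness.

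\textbf{How your route differs.} Your sets $E_{k,m}$, with a lower bound valid for all $\e<2^{-k}$ and then evaluated at one common scale, are exactly the mechanism inside the proof of Lemma~\ref{lem:the upper density lemma for a sequence of Radon measures}. There, $\phi_{10r_m}(\bigcap_{j\ge n}A_{j,\beta})$ is estimated through the single index $m$. Because you work at a single radius rather than with $\sup_{0<\rho<r_m}$, a maximal separated net replaces Vitali. The integral inequality, Chebyshev and all measurability questions then disappear.

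\textbf{What each approach buys.} Your count gives slightly more than finite measure. $E_{k,m}$ is covered by at most $C\,mM\,\eta^{-(s-rq)}$ balls of radius $2\eta$ for every $\eta<2^{-k}$, which is formally stronger than $\mathcal H^{s-rq}(E_{k,m})<\infty$. Also, taking $\eta=\e_n$ gives the sequence version, Theorem~\ref{cor:sigma finiteness of limiting average with respect to Hausdorff measure}, verbatim. The paper's machinery buys three things:
\begin{itemize}
\item the strong-type integral estimate, whereas yours is a weak-type level-set bound $\mathcal H^{s-rq}(E_{k,m})\le C\,mM$;
\item a framework valid for arbitrary Carath\'eodory measures $\psi$ and weights $\eta_n$, with a $\limsup$/summability variant reused later for the logarithmic Hausdorff measure;
\item variable-radius covering lemmas that still work when no common scale exists, as in the $\limsup$ statement for $W^{r,q}$ (Theorem~\ref{thm:hausdorff-dim-D}). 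This is precisely the limitation you point out at the end.
\end{itemize}

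\textbf{Minor points to tidy.} None of these affects validity.
\begin{itemize}
\item The set of points that are not general average Lebesgue points is contained in, not equal to, $\bigcup_{k,m}E_{k,m}$. Containment is all you use.
\item Take the net strictly $2\eta$-separated, so the closed balls $B_\eta(x_i)$ are genuinely disjoint. Maximality then already gives $E_{k,m}\subset\bigcup_i B_{2\eta}(x_i)$.
\item Finiteness of the net follows from your own counting inequality: every strictly $2\eta$-separated subset of $E_{k,m}$ has at most $mM/(c\,\eta^{s-rq})$ points. So no doubling or localization is needed, and a net of maximal cardinality is automatically maximal.
\item The ``averaging over $w$'' alternative in the comparison step is circular. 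It is the inclusion $B_\e(x)\subset B_{2\e}(z)$ that actually does the work.
\end{itemize}
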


This extends previous work on Besov spaces in euclidean spaces \cite{HP2024} to the setting of metric spaces.

Note that by combining the results of
De~Lellis and Otto \cite{DO} and Ghiraldin and Lamy \cite{GL}, given a divergence-free vector field
$u\in B^{\frac{1}{3}}_{3,\infty}(\Omega, S^1)$, where $\Omega\subset\mathbb{R}^2$ is a bounded open domain and
$S^1\subset\mathbb{R}^2$ is the unit circle,
there exists a $\mathcal{H}^{1}$-rectifiable set $D\subset \Omega$ such that every point
$x\in \Omega\setminus D$ is an average Lebesgue point of $u$ as in Definition~\ref{def:lower average Lebesgue point}. 

Therefore, in the general setting of Theorem~\ref{thm:sigma finiteness of limiting average with respect to Hausdorff measure,intro}, we need only the metric structure of Besov functions, without any additional assumptions such as a Euclidean domain, dimension $2$, the divergence-free condition on the vector field, or the requirement that the image lies in the unit sphere. 
However, under these conditions the result mentioned above is stronger: it gives information about average Lebesgue points, not only about general average Lebesgue points; and it provides geometric information about the exceptional set $D$, namely that it is rectifiable. In our general theorem, by contrast, the set $D$ is only $\sigma$-finite, which is weaker than rectifiability.

We now give the definition of fractional Sobolev space:

\begin{definition}[Fractional Sobolev space \(W^{r,q}(X,\mu;Y)\)]
Let \((X,d,\mu)\) be a metric measure space with \(\mu\) Ahlfors \(s\)-regular for some \(s\in(0,\infty)\), and let \((Y,\rho)\) be a metric space. Let \(0<r<1\) and \(0<q<\infty\). For a \(\mu\)-measurable mapping \(u:X\to Y\), define
\[
F_u(x,z):=\rho\bigl(u(x),u(z)\bigr), \qquad (x,z)\in X\times X.
\]
We say that \(u\in W^{r,q}(X,\mu;Y)\) if \(F_u\in L^q_{\mathrm{loc}}(X\times X,\mu\times\mu)\) and 
\[
\|u\|_{W^{r,q}(X,\mu;Y)}
:=\left(\iint_{X\times X}
\frac{\rho\bigl(u(x),u(z)\bigr)^q}{d(x,z)^{rq+s}}
\,d(\mu\times\mu)(x,z)\right)^{1/q}
<\infty.
\]
We say that \(u\in W^{r,q}_{\mathrm{loc}}(X,\mu;Y)\) if $u:X\to Y$ is $\mu$--measurable and 
\(u\in W^{r,q}(B,\mu;Y)\) for every ball \(B\subset X\).
\end{definition}
Our second main result is the following theorem:

\begin{theorem}[Average Lebesgue points of $W^{r,q}$]
\label{thm:hausdorff-dim-D,intro}
Let $X$ be a metric space, and let $\mu$ be an Ahlfors $s$-regular measure on $X$ for some $s\in (0,\infty)$. Let $Y$ be a metric space. Let $0<r<1$, $0<q<\infty$ be such that $rq\leq s$, and let $u\in W^{r,q}_{\text{loc}}(X,\mu;Y)$. Then $\mathcal{H}^{s - rq}$-almost every point is an average Lebesgue point of $u$ as in Definition \ref{def:lower average Lebesgue point}.
\end{theorem}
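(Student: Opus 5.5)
We plan to control the double average by a localized fractional energy and then use a density theorem for Hausdorff measures. The statement is local, so we fix a ball $B_0$ and work with $u\in W^{r,q}(B_0',\mu;Y)$ on a slightly larger ball $B_0'$. For $x\in X$ we define the local energy density
\[
g(x,\e):=\iint_{B_\e(x)\times B_\e(x)}\frac{\rho(u(z),u(v))^q}{d(z,v)^{rq+s}}\,d\mu(v)\,d\mu(z).
\]
For $z,v\in B_\e(x)$ we have $d(z,v)<2\e$, and Ahlfors regularity gives $\mu(B_\e(x))\ge c\,\e^s$. Combining these,
\[
\fint_{B_{\e}(x)}\fint_{B_{\e}(x)}\rho(u(z),u(v))^q\,d\mu(v)\,d\mu(z)\le \frac{(2\e)^{rq+s}}{c^2\e^{2s}}\,g(x,\e)=C\,\frac{g(x,\e)}{\e^{s-rq}}.
\]
It therefore suffices to show that
\[
\lim_{\e\to0^+}\frac{g(x,\e)}{\e^{s-rq}}=0
\]
for $\mathcal{H}^{s-rq}$-a.e. $x\in B_0$. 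The endpoint case $rq=s$ is handled the same way: there $\mathcal{H}^0$ is counting measure, so we must prove the limit for every $x$. This is immediate, because $g(x,\e)\to0$ by absolute continuity of the integral of the integrable function $F_u^q d^{-rq-s}$, as $B_\e(x)\times B_\e(x)$ shrinks to $\{(x,x)\}$, which has product measure zero.

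For $rq<s$ we push the energy forward to a measure on $X$. We set
\[
\nu(A):=\iint_{\{(z,v)\,:\,z\in A\}}\frac{\rho(u(z),u(v))^q}{d(z,v)^{rq+s}}\,d\mu\,d\mu .
\]
This is a finite Borel measure on $B_0'$. The difficulty is that $g(x,\e)$ is not bounded by $\nu(B_\e(x))$ alone, since $\nu(B_\e(x))$ contains all of the energy with $z\in B_\e(x)$. That is fine for an upper bound, $g(x,\e)\le \nu(B_\e(x))$, but then we must show
\[
\lim_{\e\to0^+}\frac{\nu(B_\e(x))}{\e^{s-rq}}=0 \quad \mathcal{H}^{s-rq}\text{-a.e.},
\]
and this fails wherever the energy concentrates. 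To handle it we take the truncated measures
\[
\nu_\delta(A):=\iint_{\{z\in A,\ d(z,v)<\delta\}}\frac{\rho(u(z),u(v))^q}{d(z,v)^{rq+s}}\,d\mu\,d\mu .
\]
For $\e<\delta/2$ we have $g(x,\e)\le \nu_\delta(B_\e(x))$, and $\nu_\delta(X)\to0$ as $\delta\to0$ by dominated convergence, since the diagonal has $\mu\times\mu$ measure zero.

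The key tool is the standard density estimate for a finite Borel measure $\nu$ on a metric space: for $t>0$,
\[
\mathcal{H}^{\alpha}\Bigl(\Bigl\{x:\limsup_{\e\to0}\frac{\nu(B_\e(x))}{\e^{\alpha}}>t\Bigr\}\Bigr)\le \frac{C}{t}\,\nu(X).
\]
It is proved by a $5r$-covering (Vitali) argument on small balls, and it is presumably the "density-type estimate involving Hausdorff measure" referred to in the abstract. We apply it with $\alpha=s-rq$ and $\nu=\nu_\delta$. For every $\delta$ this gives
\[
\mathcal{H}^{s-rq}\Bigl(\Bigl\{x\in B_0:\limsup_{\e\to0}\frac{g(x,\e)}{\e^{s-rq}}>t\Bigr\}\Bigr)\le \frac{C}{t}\,\nu_\delta(X).
\]
Letting $\delta\to0$, this set is $\mathcal{H}^{s-rq}$-null. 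Taking the union over $t=1/k$ and exhausting $X$ by countably many balls then completes the proof.

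The main obstacle is verifying the covering estimate in a general metric space, where balls have no doubling structure for $\mathcal{H}^{s-rq}$. We would use the $5r$-covering lemma on a family of balls of radius less than $\eta$ to control $\mathcal{H}^{s-rq}_{10\eta}$, and then let $\eta\to0$. If $X$ is not separable, we first restrict to the support of $\mu$, which is separable, since Ahlfors regularity gives a locally finite measure. Measurability of the exceptional set is avoided by working throughout with outer measure.
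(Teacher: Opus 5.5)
Your proof is correct and is essentially the paper's argument. Your $\nu_\delta$ are the paper's truncated energy measures $\mu_n$ (with $\delta=1/n$), their total mass tends to zero by dominated convergence because points are $\mu$-null, and you bound the double average by a constant times $\e^{-(s-rq)}$ times the localized energy, as the paper does. The one real difference is that you apply the single-measure Vitali bound (the content of Lemma~\ref{lem:estimate Hausdorff measure of sup of measures of balls}, with $\e^{s-rq}$ in place of $\zeta$) to each fixed $\nu_\delta$ and then let $\delta\to0$; this is legitimate because the exceptional set does not depend on $\delta$, and it spares you the paper's sequential, integral-form density lemma and the Borel measurability of $g$. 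Your separability aside is not needed, and its correct justification is that Ahlfors regularity makes bounded sets totally bounded, so $X$ itself is separable.
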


Our third main result concerns Lebesgue points, rather than merely average or general average points, of Besov functions \(B^r_{q,\infty}(X,\mu;Y)\). In this result, the target space \(Y\) is assumed to be complete.

\begin{theorem}
[Lebesgue points of $B^r_{q,\infty}$]
\label{cor:Hasdorff dimension of D is at most s-rq,intro}
Let $X$ be a metric space, and let $\mu$ be an Ahlfors $s$-regular measure on $X$ for some $s \in (0,\infty)$. Let $Y$ be a complete metric space. Let $0\leq r\leq 1$ and $0< q < \infty$ be such that $rq \leq s$, and let $u \in B^r_{q,\infty}(X,\mu;Y)$. Then there exists a set $\Theta \subset X$ such that the Hausdorff dimension of $\Theta$ is at most $s - rq$, and every point $x \in X \setminus \Theta$ is a Lebesgue point of $u$.
\end{theorem}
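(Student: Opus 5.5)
We plan to prove Theorem~\ref{cor:Hasdorff dimension of D is at most s-rq,intro} by upgrading the scale-by-scale control from the Besov seminorm into a quantitative decay of dyadic oscillations at every point outside a small set. The decay will give Cauchy sequences in $Y$, whose limits define $u^*(x)$. Set $\e_k:=2^{-k}$ and
\[
\omega_k(x):=\fint_{B_{\e_k}(x)}\fint_{B_{\e_k}(x)}\rho(u(z),u(v))^q\,d\mu(v)\,d\mu(z).
\]
Fix $t\in(0,s)$ with $t> s-rq$; if $rq=0$ the statement is trivial, since any set has dimension at most $s$. We aim to show that the exceptional set has $\mathcal{H}^t$-measure zero. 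Since $t$ is arbitrary, its Hausdorff dimension is then at most $s-rq$.

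\textbf{Step 1: a density estimate.} By Ahlfors regularity and the triangle inequality $\rho(u(z),u(v))^q\le C_q(\rho(u(z),u(y))^q+\rho(u(y),u(v))^q)$, averaged over $y\in B_{\e_k}(x)$, we get
\[
\omega_k(x)\le C\e_k^{-s}\int_{B_{\e_k}(x)}\fint_{B_{2\e_k}(y)}\rho(u(y),u(z))^q\,d\mu(z)\,d\mu(y)=:C\e_k^{-s}\,\nu_k(B_{\e_k}(x)).
\]
Here $\nu_k$ is a measure with total mass $\nu_k(X)\le C\|u\|^q\e_k^{rq}$ by Definition~\ref{def:Besov functions}. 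Fix $\delta>0$ with $\delta q< t-(s-rq)$, and put
\[
E_k:=\{x:\omega_k(x)>\e_k^{q\delta}\}.
\]
Every $x\in E_k$ satisfies $\nu_k(B_{\e_k}(x))\ge c\,\e_k^{s+q\delta}$. A $5r$-covering (Vitali) argument with balls of radius $\e_k$ then gives
\[
\mathcal{H}^t_{10\e_k}(E_k)\le C\,\e_k^{t}\,\frac{\nu_k(X)}{\e_k^{s+q\delta}}\le C\|u\|^q\,2^{-k(t-s+rq-q\delta)}.
\]
This is summable in $k$. Hence for $\Theta_m:=\bigcup_{k\ge m}E_k$ we have $\mathcal{H}^t_\infty$-type bounds, with covering scale $\to0$, tending to $0$. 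Thus $\Theta:=\bigcap_m\Theta_m$ has $\mathcal{H}^t(\Theta)=0$, using the standard fact that $\mathcal{H}^t$ is computed by $\mathcal{H}^t_\eta$ with $\eta\to0$ along covers of $\Theta\subset\Theta_m$.

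\textbf{Step 2: Cauchy property.} For $x\notin\Theta$ there is $m$ with $\omega_k(x)\le 2^{-kq\delta}$ for all $k\ge m$. Without linear structure we cannot average values. Instead, for $k\ge m$ we pick a point $y_k\in B_{\e_k}(x)$ such that both
\[
\fint_{B_{\e_k}(x)}\rho(u(y_k),u(v))^q\,d\mu(v)\le 4\,\omega_k(x)
\quad\text{and}\quad
\fint_{B_{\e_{k+1}}(x)}\rho(u(y_k),u(v))^q\,d\mu(v)\le 4C\,\omega_k(x).
\]
Such a point exists by Chebyshev, since $\mu(B_{\e_{k+1}})\ge c\mu(B_{\e_k})$. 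Comparing $y_k$ and $y_{k+1}$ through the average over $v\in B_{\e_{k+1}}(x)$ gives
\[
\rho(u(y_k),u(y_{k+1}))^q\le C(\omega_k+\omega_{k+1})\le C2^{-kq\delta}.
\]
Then $\sum_k\rho(u(y_k),u(y_{k+1}))<\infty$ (for $q<1$, use $\rho\le(\rho^q)^{1/q}$ termwise; the series is still geometric). By completeness of $Y$, $u(y_k)\to u^*(x)$.

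\textbf{Step 3: the Lebesgue property.} Again by the quasi-triangle inequality,
\[
\fint_{B_{\e_k}(x)}\rho(u(z),u^*(x))^q\,d\mu(z)\le C_q\Big(4\omega_k(x)+\rho(u(y_k),u^*(x))^q\Big)\to0.
\]
For general $\e\in[\e_{k+1},\e_k]$, Ahlfors regularity gives the comparison with the dyadic radius $\e_k$. Hence every $x\notin\Theta$ is a Lebesgue point.

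The main obstacle is Step 1. It needs a careful Vitali covering in a general metric space, using only Ahlfors regularity (boundedly many overlaps is not available, but disjoint subfamilies suffice). It also requires that the exponent be strictly above $s-rq$ to absorb the $\e_k^{q\delta}$ loss. This is exactly why the statement is about dimension rather than $\sigma$-finiteness.
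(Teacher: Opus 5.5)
Your proof is correct, and it takes a more elementary route than the paper.

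\textbf{The paper's route.} The paper first builds an abstract Carath\'eodory framework: density lemmas for sequences of measures with a supremum over radii, culminating in part~3 of Theorem~\ref{thm:Besov constant controls double average integral}. It then introduces the logarithmic Hausdorff measure $\Lambda^{s-rq,\beta}$, with gauge $r^{s-rq}|\ln r|^{-\beta}$. With $\e_n=e^{-n}$ and weights $\eta_n=n^\alpha$, it shows that the set where $n^\alpha\omega_n\not\to0$ is $\Lambda^{s-rq,\beta}$-null whenever $\beta>\alpha+1$. Choosing $\alpha>q$ makes $\sum_n\omega_n^{1/q}$ finite. Near-minimizers $v_n\in Y$ of $v\mapsto\fint_{B_{\e_n}(x)}\rho(u(z),v)^q\,d\mu(z)$ then form a Cauchy sequence. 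The dimension bound comes from the equality of logarithmic and ordinary Hausdorff dimension.

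\textbf{Your route.} You run a single-scale Borel--Cantelli argument with fixed-radius Vitali coverings, directly for $\mathcal{H}^t$ with $t>s-rq$, and you demand geometric decay $\omega_k\le 2^{-kq\delta}$. This bypasses the variable-radius density lemmas, the auxiliary measures $\mu_n$, and the logarithmic measure altogether. It also makes the Cauchy step a geometric series. Your Chebyshev selection of sample points $y_k$ is a valid substitute for the paper's near-minimizers.

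\textbf{What you give up.} The paper proves the finer statement that the non-Lebesgue set is $\Lambda^{s-rq,\beta}$-null for every $\beta>q+1$. Contrary to your closing remark, this loss is not forced by the problem. Run your own covering with thresholds $E_k=\{\omega_k>k^{-\alpha}\}$ and gauge $g_{s-rq,\beta}$. It gives $\Lambda^{s-rq,\beta}_{10\e_k}(E_k)\lesssim\|u\|^q k^{\alpha-\beta}$, which is summable for $\beta>\alpha+1$. Meanwhile $\sum_k k^{-\alpha/q}<\infty$ for $\alpha>q$. Together these recover exactly the paper's threshold.

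\textbf{Bookkeeping.} Your set $\Theta=\bigcap_m\Theta_m$ depends on $t$ through $\delta$. To produce a single exceptional set, take $\Theta$ to be the set of non-Lebesgue points, or intersect your sets over a sequence $t_j\downarrow s-rq$. This set lies in every $t$-dependent set, so it is $\mathcal{H}^t$-null for all $t>s-rq$.
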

\begin{remark}
The result of Theorem~\ref{cor:Hasdorff dimension of D is at most s-rq,intro} holds immediately for functions in the class $W^{r,q}(X,\mu;Y)$, where $0<r<1$ and $0<q<\infty$, because $W^{r,q}(X,\mu;Y)\subset B^r_{q,\infty}(X,\mu;Y)$. See Remark~\ref{inclusion of Sobolev to Besov} for more details.
\end{remark}
 
In \cite{LamyMarconi2024}, X. Lamy and E. Marconi investigated the two-dimensional eikonal equation and showed that its entropy solutions $m:\Omega\subset \R^2\to \R^2$, have the property that the set of non-Lebesgue points has Hausdorff dimension at most~$1$ \cite[Theorem~1.1]{LamyMarconi2024}; see also Remark~1.2 therein. In that remark, the authors point out that the entropy solution $m$ belongs to the Besov space $B^{1/3}_{3,\infty}$, and that the bound on  the Hausdorff dimension of the set of non-Lebesgue points of it can be derived from this regularity. 

Theorem~\ref{cor:Hasdorff dimension of D is at most s-rq,intro} establishes this connection — namely, the connection between Besov regularity $B^r_{q,\infty}$ and the upper bound $s-rq$ for the Hausdorff dimension of the set of non-Lebesgue points — in the greatest possible generality in which Besov spaces can be defined. In particular, this level of generality shows that the above-mentioned connection is a purely metric phenomenon.

In addition, in the above-mentioned remark \cite[Remark~1.2]{LamyMarconi2024}, the authors consider a Hausdorff measure defined via a function of the form $r \mapsto r |\ln r|^{-14}$. In this article, we introduce the notion of logarithmic Hausdorff measure, which will serve as a key tool in the proof of Theorem~\ref{cor:Hasdorff dimension of D is at most s-rq,intro}.

\section{Preliminaries}

We begin by recalling Carathéodory's construction, see also \cite{F,Mattila1995}. Let $X$ be a metric space, let $\mathcal{F}$ be a family of subsets of $X$, and let $\zeta: \mathcal{F} \to [0,\infty]$ be a set function. For $\delta \in (0,\infty]$ and for any subset $A \subset X$, define $\phi_\delta(A)$ to be the infimum of all sums $\sum_{S \in G} \zeta(S)$, where $G \subset \mathcal{F}$ ranges over all countable subfamilies such that $\diam(S) \leq \delta$ for every $S \in G$ and $\bigcup_{S \in G} S \supset A$.

Since for any $0 < \sigma < \delta \leq \infty$ we have $\phi_\delta(A) \leq \phi_\sigma(A)$, it follows that the limit
\begin{equation}
\psi(A) := \lim_{\delta \to 0^+} \phi_\delta(A) = \sup_{\delta > 0} \phi_\delta(A)
\end{equation}
exists.

The set functions $\phi_\delta$ and $\psi$ are outer measures on $X$. The function $\psi$ defines a Borel measure. If the family $\mathcal{F}$ consists of Borel sets, then $\psi$ is a Borel regular measure.

The measure $\psi$ is called the \textbf{result of Carathéodory's construction from $\zeta$ and $\mathcal{F}$}, and the family $\{\phi_\delta\}_{\delta > 0}$ is referred to as the \textbf{approximating measures at size $\delta$}.

In the sequel, we impose the following additional assumptions on the family $\mathcal{F}$ and the function $\zeta$: the family $\mathcal{F}$ consists of all Borel subsets of the metric space $X$, and the function $\zeta$ satisfies the following two additional conditions:

\begin{enumerate}
\item There exists $I\in (0,\infty]$, such that for every $R \in (0,I)$, and $x \in X$, we have
\begin{equation}
\label{eq:property of zeta1}
0<\zeta(B_R(x))<\infty.
\end{equation}

\item There exist constants $\lambda \in (0,\infty)$ and $\gamma \in (0,\infty]$ such that for every $x \in X$ and every radius $0 < R < \gamma$, we have
\begin{equation}
\label{eq:property of zeta}
\zeta(B_{5R}(x)) \leq \lambda\, \zeta(B_R(x)).
\end{equation}
\end{enumerate}

Recall the Vitali covering lemma:
\begin{lemma}[Vitali covering lemma]
\label{lem:Vitali Covering Lemma}
Let $X$ be a metric space, and let $\mathcal{K}$ be a family of non-degenerate closed balls in $X$ such that the radii of all balls in $\mathcal{K}$ are uniformly bounded. Then there exists a pairwise disjoint subcollection $\mathcal{G} \subset \mathcal{K}$ such that
\[
\bigcup_{B\in \mathcal{K}} B \subset \bigcup_{B\in\mathcal{G}} 5B,
\]
where $5B$ denotes the ball with the same center as $B$ but radius multiplied by $5$.
\end{lemma}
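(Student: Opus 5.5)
We prove the Vitali covering lemma (Lemma~\ref{lem:Vitali Covering Lemma}) by the standard dyadic-in-radius greedy selection combined with Zorn's lemma, or equivalently the Hausdorff maximal principle, since the family $\mathcal{K}$ may be uncountable and $X$ need not be separable.

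\textbf{Layering by radius.} Let $R:=\sup\{\operatorname{rad}(B): B\in\mathcal{K}\}<\infty$; by non-degeneracy $R>0$. Here each $B\in\mathcal{K}$ is understood together with a fixed choice of center and radius, so that $5B$ is well defined. For $j\geq 1$ set
\[
\mathcal{K}_j:=\Set{B\in\mathcal{K}}{R/2^{j}<\operatorname{rad}(B)\leq R/2^{j-1}} .
\]
Since all radii are positive, these layers cover $\mathcal{K}$.

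\textbf{Inductive selection.} Construct $\mathcal{G}_1\subset\mathcal{K}_1,\ \mathcal{G}_2\subset\mathcal{K}_2,\dots$ inductively. Let $\mathcal{G}_j$ be a maximal pairwise disjoint subfamily of
\[
\mathcal{H}_j:=\Set{B\in\mathcal{K}_j}{B\cap B'=\emptyset \text{ for all } B'\in\mathcal{G}_1\cup\dots\cup\mathcal{G}_{j-1}} .
\]
Such a maximal family exists by Zorn's lemma. Consider disjoint subfamilies of $\mathcal{H}_j$, ordered by inclusion; the union of a chain is again pairwise disjoint, because any two of its members already lie in a common element of the chain. Set $\mathcal{G}:=\bigcup_j\mathcal{G}_j$. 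This family is pairwise disjoint by construction.

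\textbf{Covering property.} Let $B\in\mathcal{K}$, with $B\in\mathcal{K}_j$, center $x$ and radius $r$. We claim $B$ meets some $B'\in\mathcal{G}_1\cup\dots\cup\mathcal{G}_j$.
\begin{itemize}
\item If $B\notin\mathcal{H}_j$, it meets a ball of an earlier $\mathcal{G}_i$ by definition.
\item If $B\in\mathcal{H}_j$, then either $B\in\mathcal{G}_j$, or maximality of $\mathcal{G}_j$ forces $B$ to meet some member of $\mathcal{G}_j$.
\end{itemize}
Any such $B'$, with center $x'$ and radius $r'$, lies in a layer of index at most $j$, so $r'>R/2^j\geq r/2$, i.e.\ $r<2r'$. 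Pick $y\in B\cap B'$. For $z\in B$,
\[
d(z,x')\leq d(z,x)+d(x,y)+d(y,x')\leq 2r+r'<5r'.
\]
Hence $B\subset 5B'$, which gives $\bigcup_{B\in\mathcal{K}}B\subset\bigcup_{B\in\mathcal{G}}5B$.

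\textbf{Main obstacle.} The only delicate point is the existence of the maximal disjoint families without separability, which the Zorn argument above handles. Everything else is the triangle-inequality estimate. The factor $2$ between layers is what produces the constant $5$; finer layers would give any constant larger than $3$.
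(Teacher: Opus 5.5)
Your proof is correct. Every $B\in\mathcal{K}_j$ meets some selected $B'$ from a layer of index at most $j$, the layer bounds give $r\le R/2^{j-1}<2r'$, and the triangle inequality then yields $B\subset 5B'$. Disjointness of $\mathcal{G}$ and the existence of each maximal $\mathcal{G}_j$ are handled properly.

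The paper gives no proof of its own and cites Theorem~1.2 of Heinonen's book, which organizes the selection differently. There, Zorn's lemma is applied once, to the class of all disjoint subfamilies $\omega\subset\mathcal{K}$ such that every ball of $\mathcal{K}$ meeting a member of $\omega$ already meets a member of $\omega$ of at least half its radius. This class is closed under unions of chains, so a maximal element $\omega_0$ exists. $\omega_0$ must cover: otherwise one could adjoin a ball that misses $\omega_0$ and whose radius exceeds half the supremum of radii of all balls missing $\omega_0$, contradicting maximality. That argument builds the factor-$2$ comparison into the partial order and makes all choices in a single maximality step, with no recursion over scales.

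Your dyadic layering instead makes the largest-first greedy structure and the origin of the constant explicit; as you note, finer layers give any constant larger than $3$. It is also the form that adapts most directly to settings where Zorn's lemma can be avoided. For example, in doubling spaces the disjoint balls of one layer with centers in a bounded set are finitely many, so each $\mathcal{G}_j$ can be produced by a countable greedy exhaustion. Both versions give the refinement that each $B\in\mathcal{K}$ meets a selected ball of radius at least half its own. Neither needs to deliver countability of $\mathcal{G}$, which the paper obtains separately in Lemma~\ref{lem:estimate Hausdorff measure of sup of measures of balls} from disjointness and $\mu(\Omega)<\infty$.
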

For a proof of the Vitali covering lemma, see, for instance, Theorem~1.2 in~\cite{JuhaHeinonen}.

\section{Density-type lemmas involving Carathéodory's construction}
Hereafter, we use the symbol $B_r(x)$ to denote the closed ball centered at $x$ with radius $r\in (0,\infty)$. In the following lemmas, the symbols $\phi_\delta$, $\psi$, $\zeta$, $\mathcal{F}$, and $\lambda$ are as in the previous section. In particular, $\mathcal{F}$ is always assumed to be the family of Borel sets, and $\zeta$ satisfies the additional properties \eqref{eq:property of zeta1} and \eqref{eq:property of zeta}.

Hereafter, we use the symbol $\mathcal{P}(X)$ to denote the power set of a set $X$, that is, the collection of all subsets of $X$.
\begin{lemma}
\label{lem:countability of families of sets}
Let $X$ be a set, and let $F:\mathcal{G}\subset \mathcal{P}(X)\to (0,\infty)$ be a set function.
Assume that there exists a constant $D\in (0,\infty)$ such that for every finite family
$S_1,\dots,S_M\in \mathcal{G}$ of distinct sets, we have
\[
\sum_{i=1}^M F(S_i)\leq D.
\]
Then the set $\mathcal{G}$ is countable.
\end{lemma}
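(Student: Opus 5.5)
We prove the countability of $\mathcal{G}$ by writing it as a countable union of finite subfamilies, sorted by the size of $F$. For each $n\in\N$ set
\[
\mathcal{G}_n:=\Set{S\in\mathcal{G}}[F(S)>1/n].
\]
Because $F$ takes values in $(0,\infty)$, every $S\in\mathcal{G}$ satisfies $F(S)>1/n$ for some $n$. Hence $\mathcal{G}=\bigcup_{n\in\N}\mathcal{G}_n$, and it suffices to show that each $\mathcal{G}_n$ is finite.

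Fix $n$ and take any finite family of distinct sets $S_1,\dots,S_M\in\mathcal{G}_n$. The hypothesis gives
\[
\frac{M}{n}<\sum_{i=1}^M F(S_i)\le D,
\]
so $M<nD$. Therefore $\mathcal{G}_n$ cannot contain $\lceil nD\rceil$ distinct elements. If it were infinite, we could pick $\lceil nD\rceil$ distinct sets from it and contradict this bound. So $\#\mathcal{G}_n\le \lceil nD\rceil$, and in particular $\mathcal{G}_n$ is finite.

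A countable union of finite families is countable, so $\mathcal{G}$ is countable.

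The only point needing care is the strict positivity of $F$. It is what guarantees that the families $\mathcal{G}_n$ exhaust $\mathcal{G}$; if $F$ were allowed to vanish, the sets with $F(S)=0$ could form an uncountable family. Apart from this, the argument is a pigeonhole count, and no real obstacle is expected.
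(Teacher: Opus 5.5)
Your proof is correct and follows essentially the same argument as the paper. You split $\mathcal{G}$ into the level families of $F$ (you use $F(S)>1/n$ where the paper uses $F(S)\ge 1/n$, which changes nothing), show each is finite via the uniform bound on finite sums, and conclude because a countable union of finite sets is countable; your explicit bound $\#\mathcal{G}_n\le\lceil nD\rceil$ is just a quantitative form of the paper's step of letting $M\to\infty$.
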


\begin{proof}
For each $n\in\N$, define $\mathcal{G}_n$ to be the collection of sets $S\in \mathcal{G}$ such that
$F(S)\geq 1/n$. Clearly, $\mathcal{G}=\bigcup_{n\in\N}\mathcal{G}_n$.
We claim that each $\mathcal{G}_n$ is finite. Indeed, if $\mathcal{G}_n$ were infinite, then for every
$M\in\N$ we could choose distinct sets $S_1,\dots,S_M\in\mathcal{G}_n$, which would give
\[
D \ge \sum_{j=1}^M F(S_j) \ge \sum_{j=1}^M \frac{1}{n} = \frac{M}{n}.
\]
Letting $M\to\infty$ yields a contradiction with the finiteness of $D$. Therefore,
$\mathcal{G}_n$ is finite for every $n\in\N$. Since $\mathcal{G}$ is a countable union of finite sets, it is countable.
\end{proof}

\begin{lemma}
\label{lem:estimate Hausdorff measure of sup of measures of balls}
Let $X$ be a metric space, let $\Omega \subset X$ be an open
set, and let $\mu$ be a Borel measure on $\Omega$. Let $t
\in (0,\infty)$, $\delta\in \left(0,\min\{I,\gamma\}\right)$, where $I,\gamma$ are the numbers related to properties \eqref{eq:property of zeta}, \eqref{eq:property of zeta1}, and 
\begin{equation}
A\subset \Set{x\in \Omega}[\sup_{\substack{ 0 < \rho < \delta \\ B_\rho(x) \subset \Omega}} \frac{\mu\big(B_\rho(x)\big)}{\zeta\big(B_\rho(x)\big)}>t].
\end{equation}
Then,
\begin{equation}
\label{estimate for 10dleta Haudorff measure of A}
\phi_{10\delta}(A) \leq \frac{\lambda}{t}
\mu(\Omega).
\end{equation}
\end{lemma}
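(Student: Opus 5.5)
For every $x\in A$ the defining supremum exceeds $t$, so we can choose a radius $\rho_x\in(0,\delta)$ with $B_{\rho_x}(x)\subset\Omega$ and
\[
\zeta\big(B_{\rho_x}(x)\big)<\tfrac1t\,\mu\big(B_{\rho_x}(x)\big).
\]
The family $\mathcal{K}=\{B_{\rho_x}(x)\}_{x\in A}$ consists of non-degenerate closed balls with radii bounded by $\delta$. By the Vitali covering lemma (Lemma~\ref{lem:Vitali Covering Lemma}) there is a pairwise disjoint subfamily $\mathcal{G}\subset\mathcal{K}$ with
\[
A\subset\bigcup_{B\in\mathcal{K}}B\subset\bigcup_{B\in\mathcal{G}}5B .
\]

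Next I show that $\mathcal{G}$ is countable. We may assume $\mu(\Omega)<\infty$, since otherwise there is nothing to prove. Put $F(B):=\mu(B)$ for $B\in\mathcal{G}$; this is positive because $\mu(B)>t\,\zeta(B)>0$ by \eqref{eq:property of zeta1}, using $\rho_x<\delta<I$. For any finitely many distinct balls in $\mathcal{G}$, disjointness and the fact that they are Borel subsets of $\Omega$ give $\sum\mu(B_i)\le\mu(\Omega)$. Lemma~\ref{lem:countability of families of sets} then says $\mathcal{G}$ is countable.

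So $\{5B\}_{B\in\mathcal{G}}$ is a countable cover of $A$ by Borel sets. Each has $\diam(5B)\le 10\rho_x<10\delta$, so it is admissible for $\phi_{10\delta}$. Since $\rho_x<\gamma$, the doubling-type property \eqref{eq:property of zeta} applies, and
\[
\phi_{10\delta}(A)\le\sum_{B\in\mathcal{G}}\zeta(5B)\le\lambda\sum_{B\in\mathcal{G}}\zeta(B)\le\frac{\lambda}{t}\sum_{B\in\mathcal{G}}\mu(B)\le\frac{\lambda}{t}\,\mu(\Omega).
\]
The last inequality uses countable additivity of $\mu$ on the disjoint Borel balls contained in $\Omega$.

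The main obstacle is the countability of the Vitali subfamily, because without separability the Vitali lemma may return uncountably many balls. This is exactly what Lemma~\ref{lem:countability of families of sets} resolves, using the positivity of $\mu$ on the chosen balls. A smaller technical point is that the balls $5B$ need not lie in $\Omega$. This does not matter: only $\zeta(5B)$ appears in the estimate, and it is controlled by \eqref{eq:property of zeta} on all of $X$.
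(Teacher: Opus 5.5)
Your proof is correct and matches the paper's argument step by step: choose admissible radii, apply the Vitali lemma, use Lemma~\ref{lem:countability of families of sets} to make the disjoint subfamily countable, then sum using \eqref{eq:property of zeta}. The only difference is cosmetic: you feed $F=\mu$ into the countability lemma, while the paper uses $F=\zeta$ (bounded through $\zeta(B_i)\le \mu(B_i)/t$), and either choice works.
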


\begin{proof}
If $A=\emptyset$, then the inequality \eqref{estimate for 10dleta Haudorff measure of A} trivially holds, therefore, assume that $A\neq \emptyset$. For each $x \in A$, choose $\rho_x \in (0, \delta)$ such that
$B_{\rho_x}(x) \subset \Omega$ and 
$\mu(B_{\rho_x}(x)) > t \zeta(B_{\rho_x}(x))$.
Let $\mathcal{K}$ be the family of such balls 
$B_{\rho_x}(x)$. Note that the radii of all balls in 
$\mathcal{K}$ are bounded above by $\delta$. Therefore, by Vitali's
covering lemma, Lemma \ref{lem:Vitali Covering Lemma}, there exists a pairwise disjoint subfamily
$\mathcal{G} \subset
\mathcal{K}$ such that
\[
A \subset \bigcup_{x\in A}B_{\rho_x}(x)\subset \bigcup_{B\in\mathcal{G}}5B,
\]
where if $B=B_\rho(x)$, then $5B=B_{5\rho}(x)$. We can assume that $\mu(\Omega)<\infty$, otherwise \eqref{estimate for 10dleta Haudorff measure of A} trivially holds. Therefore, we conclude that $\mathcal{G}$ is countable, since for every choice of a finite collection of distinct balls (and hence pairwise disjoint) from it, say $B_1,\dots,B_N\in \mathcal{G}$, we have by~\eqref{eq:property of zeta1} that $0<\zeta(B_i)<\infty$, and
\begin{equation}
\label{eq:sum of zeta i is finite}
\sum_{i=1}^N 
\zeta(B_i)
\leq \frac{1}{t} \sum_{i=1}^N
\mu\big(B_i\big) = \frac{1}{t}
\mu\left(\bigcup_{i=1}^N B_i\right) \leq
\frac{1}{t} \mu(\Omega)<\infty.
\end{equation}
From \eqref{eq:sum of zeta i is finite} and Lemma \ref{lem:countability of families of sets}, we conclude that $\mathcal{G}$ is countable. Set $\mathcal{G} = \{ B_{\rho_i}(x_i) \}_{i \in \mathbb{N}}$. Hence, using \eqref{eq:property of zeta} 
\[
\phi_{10\delta}(A) \leq \sum_{i=1}^\infty 
\zeta(B_{5\rho_i}(x_i))
\leq \lambda\sum_{i=1}^\infty 
\zeta(B_{\rho_i}(x_i)) \leq \frac{\lambda}{t} \sum_{i=1}^\infty
\mu\big(B_{\rho_i}(x_i)\big) = \frac{\lambda}{t}
\mu\left(\bigcup_{i=1}^\infty B_{\rho_i}(x_i)\right) \leq
\frac{\lambda}{t} \mu(\Omega).
\]
\end{proof}

\begin{lemma}
\label{lem:the upper density lemma for a sequence of Radon measures}
Let $X$ be a metric space. For every $n \in \mathbb{N}$, let $\mu_n$ be a Borel measure on
$X$. Let $t \in [0, \infty)$, and let  $r_n \downarrow 0$ be a sequence
of positive numbers. Let 
\begin{equation}
\label{eq:bound from below on the generalized upper density} 
A\subset \Set{x\in X}[\liminf_{n \to \infty} \left( \sup_{0 < \rho < r_n}
\frac{ \mu_n\big(B_\rho(x)\big)}{ \zeta\big(B_\rho(x)\big)} \right)
 \geq t ].
\end{equation} 
Then, for every open set $\Omega$ containing $A$ we get
\begin{equation}
\label{eq:liminf bounds from above psi A}
\lambda\left( \liminf_{n \to \infty} \mu_n(\Omega)\right) \geq t \psi(A).
\end{equation}
\end{lemma}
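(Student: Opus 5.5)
The plan is to reduce to Lemma~\ref{lem:estimate Hausdorff measure of sup of measures of balls} applied to each $\mu_n$ restricted to $\Omega$, with a slightly smaller threshold and a small radius scale, and then pass to the limit. The case $t=0$ is trivial, so assume $t>0$. We may also assume $L:=\liminf_{n\to\infty}\mu_n(\Omega)<\infty$, since otherwise \eqref{eq:liminf bounds from above psi A} holds trivially.

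Fix $\delta\in(0,\min\{I,\gamma\})$ and $t'\in(0,t)$. For each $x\in A$ the liminf in \eqref{eq:bound from below on the generalized upper density} is at least $t>t'$. Hence there is $N(x)$ such that for all $n\ge N(x)$ we have $\sup_{0<\rho<r_n}\mu_n(B_\rho(x))/\zeta(B_\rho(x))>t'$. Since $r_n\downarrow0$, we may also take $N(x)$ so large that $r_n<\delta$ for $n\ge N(x)$. The inner-ball condition $B_\rho(x)\subset\Omega$ needed in Lemma~\ref{lem:estimate Hausdorff measure of sup of measures of balls} must also be arranged. Because $\Omega$ is open, there is $\eta_x>0$ with $B_{\eta_x}(x)\subset\Omega$. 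Enlarging $N(x)$ so that $r_n<\eta_x$ for $n\ge N(x)$, every admissible $\rho<r_n$ gives a ball contained in $\Omega$.

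Set $A_k:=\{x\in A: N(x)\le k\}$, an increasing sequence with union $A$. For every $n\ge k$ and every $x\in A_k$, the point $x$ lies in the superlevel set of Lemma~\ref{lem:estimate Hausdorff measure of sup of measures of balls} with measure $\mu_n$ (a Borel measure on $\Omega$ by restriction), threshold $t'$ and scale $\delta$. That lemma then gives $\phi_{10\delta}(A_k)\le \frac{\lambda}{t'}\mu_n(\Omega)$ for all $n\ge k$. Taking the liminf in $n$ yields $\phi_{10\delta}(A_k)\le \frac{\lambda}{t'}L$ for every $k$.

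The main obstacle is to pass from $A_k$ to $A=\bigcup_k A_k$ at the level of $\phi_{10\delta}$. The plan is to use that $\phi_{10\delta}$ is Borel regular, because $\mathcal F$ consists of Borel sets, so that it is continuous along increasing sequences of arbitrary sets: $\phi_{10\delta}(A_k)\uparrow\phi_{10\delta}(A)$. This is the standard fact for regular outer measures: take Borel hulls $E_k\supset A_k$ with $\phi_{10\delta}(E_k)=\phi_{10\delta}(A_k)$, replace them by $\bigcap_{j\ge k}E_j$, and use monotone continuity on measurable sets. If Borel regularity of $\phi_{10\delta}$ seemed delicate, I would instead redo the Vitali argument directly for $A$ and use countable subadditivity with a decomposition. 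The regularity route is the cleanest.

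With continuity in hand, $\phi_{10\delta}(A)\le \frac{\lambda}{t'}L$. Letting $\delta\to0^+$ gives $\psi(A)\le \frac{\lambda}{t'}L$, and letting $t'\uparrow t$ yields $t\,\psi(A)\le\lambda\liminf_n\mu_n(\Omega)$, which is \eqref{eq:liminf bounds from above psi A}.
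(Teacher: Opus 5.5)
Everything up to $\phi_{10\delta}(A_k)\le\frac{\lambda}{t'}L$ is correct, but the step you flag as the main obstacle fails as justified. Because $\mathcal F$ consists of Borel sets, $\phi_{10\delta}$ does admit Borel hulls. It is not a Borel measure, however: Borel sets are in general not $\phi_{10\delta}$-measurable. For example, take $\mathcal H^s_\delta$ on $\R$ with $0<s<1$, and test $[0,\delta]$ against $E=[\delta/2,3\delta/2]$. This gives $\mathcal H^s_\delta(E)=\delta^s<2^{1-s}\delta^s=\mathcal H^s_\delta(E\cap[0,\delta])+\mathcal H^s_\delta(E\setminus[0,\delta])$. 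So the ``monotone continuity on measurable sets'' you want to apply to the hulls $\bigcap_{j\ge k}E_j$ is not available. Continuity from below of the approximating measure $\phi_{10\delta}$ along arbitrary increasing sequences is a delicate increasing-sets property. It is known only in special situations (e.g.\ Davies' increasing sets lemma for $\mathcal H^s_\delta$ in Euclidean space), not for a general metric space and a general $\zeta$. Your fallback does not rescue this either. Points of different $A_k$ are detected by different measures $\mu_n$, so one Vitali argument cannot handle them together, and subadditivity only gives a sum over $k$ of bounds each of size $\frac{\lambda}{t'}L$.

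The repair is to interchange the limits, so that continuity from below is used only for $\psi$. That is a genuine Borel regular measure (Borel sets are $\psi$-measurable), so your hull argument is valid for it. The steps are:
\begin{enumerate}
\item Define $N(x)$ using only the density condition and $r_n<\eta_x$, so that $A_k$ does not depend on $\delta$.
\item For fixed $k$ and $\delta$, apply Lemma~\ref{lem:estimate Hausdorff measure of sup of measures of balls} for all $n\ge k$ with $r_n<\delta$, and take $\liminf_n$.
\item Let $\delta\to0^+$ to get $\psi(A_k)\le\frac{\lambda}{t'}L$.
\item Only now pass to $\psi(A)=\lim_k\psi(A_k)$.
\end{enumerate}
This is the paper's route. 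It uses the sets $\bigcap_{j\ge k}A_{j,\beta}$ in place of your $A_k$. It ties the scale to the measure index by estimating $\phi_{10r_m}\bigl(\bigcap_{j\ge k}A_{j,\beta}\bigr)\le\frac{\lambda}{t\beta}\mu_m(X)$ for $m\ge k$ and letting $m\to\infty$, then invokes continuity of $\psi$. It handles $\Omega$ at the end by restricting $\mu_n$ to $\Omega$, rather than through your radii $\eta_x$.
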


\begin{proof}  
We can assume that $t>0$, otherwise \eqref{eq:liminf bounds from above psi A} trivially holds. Fix $\beta \in (0,1)$. For each $n \in \mathbb{N}$, define
\begin{equation}
A_{n,\beta} := \Set{ x \in A }[\sup_{0 < \rho < r_n } \frac{\mu_n\big(B_\rho(x)\big)}{\zeta\big(B_\rho(x)\big)}
 > t \beta ].
\end{equation}
By assumption \eqref{eq:bound from below on the generalized upper density}, we have
\begin{equation}
\label{eq:psi of A minus the liminf of Ajbeta is zero}
A =\bigcup_{k=1}^\infty \bigcap_{j=k}^\infty A_{j,\beta}.
\end{equation}
Indee, by definition, we have $A \supset \bigcup_{k=1}^\infty \bigcap_{j=k}^\infty A_{j,\beta}$.
For the reverse inclusion, let $x\in A$ and assume, by contradiction, that 
$x\notin \bigcup_{k=1}^\infty \bigcap_{j=k}^\infty A_{j,\beta}$.
Then for every $k\in\N$ we have 
$x\notin \bigcap_{j=k}^\infty A_{j,\beta}$.
Hence, there exists a sequence of indices $\{j_l\}_{l\in\N}$ with $j_l\to\infty$ as $l\to\infty$ such that 
$x\notin A_{j_l,\beta}$, for every $l\in\N$.
Therefore, we obtain
\begin{equation}
\label{eq:bound from below on the generalized upper density1} 
\liminf_{n \to \infty} \left( \sup_{0 < \rho < r_n}
\frac{\mu_n\big(B_\rho(x)\big)}{\zeta\big(B_\rho(x)\big)}  \right)
\leq \liminf_{l \to \infty} \left( \sup_{0 < \rho < r_{j_l} }
\frac{\mu_{j_l}\big(B_\rho(x)\big)}{\zeta\big(B_\rho(x)\big)} \right)
 \leq t\beta<t.
\end{equation} 
This contradicts \eqref{eq:bound from below on the generalized upper density}. It proves \eqref{eq:psi of A minus the liminf of Ajbeta is zero}. Since $\psi$ is a Borel regular measure and therefore has the monotonicity property (even when the sets are not $\psi$-measurable), we obtain from \eqref{eq:psi of A minus the liminf of Ajbeta is zero}
\begin{equation}
\label{measure of intersection from n1}
\psi(A) = \psi \left( \bigcup_{k=1}^\infty \bigcap_{j=k}^\infty A_{j,\beta} \right)
= \lim_{k \to \infty} \psi\left( \bigcap_{j=k}^\infty A_{j,\beta} \right).
\end{equation}
Applying Lemma~\ref{lem:estimate Hausdorff measure of sup
of measures of balls} (with $\Omega=X$), for every $n,m\in \N$ such that $m \geq n$, we obtain
\begin{multline}
\label{eq:estimate for K}
\phi_{10r_m} \left( \bigcap_{j=n}^\infty A_{j,\beta} \right)
\leq \phi_{10r_m}(A_{m,\beta})
\\
\leq \phi_{10r_m}\left(\Set{x \in X}[ \sup_{0 < \rho < r_m } \frac{\mu_m\big(B_\rho(x)\big)}{\zeta\big(B_\rho(x)\big)}
 > t \beta ]\right)
\leq \frac{\lambda}{t \beta}\mu_m(X).
\end{multline}
Taking the lower limit as $m \to \infty$ in \eqref{eq:estimate for K} and using that $\lim_{\delta\to 0^+}\phi_\delta=\psi$, we get
\begin{equation}
\label{eq:Haudorff measure of intersection from n}
\psi \left( \bigcap_{j=n}^\infty A_{j,\beta} \right) \leq
\frac{\lambda}{t\beta} \left( \liminf_{m \to \infty} \mu_m(X) \right).
\end{equation}
Taking the limit as $n\to \infty$ in \eqref{eq:Haudorff measure of intersection from n} and using \eqref{measure of intersection from n1}, we get 
\begin{equation}
\label{eq:final estimate with beta}
\psi(A)=\lim_{n \to \infty} \psi \left( \bigcap_{j=n}^\infty A_{j,\beta} \right)
\leq \frac{\lambda}{t\beta} \left( \liminf_{m \to \infty} \mu_m(X) \right).
\end{equation}
Since $\beta\in (0,1)$ is arbitrary, taking the limit as $\beta\to 1^-$ in \eqref{eq:final estimate with beta}, we get
\begin{equation}
\label{eq:final estimate with beta1}
\psi(A)
\leq \frac{\lambda}{t} \left( \liminf_{m \to \infty} \mu_m(X) \right).
\end{equation}
If $\Omega \subset X$ is an open set containing $A$, we may replace the measures $\mu_m$ by the measures $\mu_m^{\Omega}$ defined by $\mu_m^{\Omega}(E) := \mu_m(\Omega \cap E)$. Note that, since $\Omega$ is open and $A\subset \Omega$, the condition \eqref{eq:bound from below on the generalized upper density} still holds for these modified measures. So we get for every open set $\Omega$ with $A\subset \Omega\subset X$
\[
t \psi(A) \leq \lambda \left(\liminf_{m \to \infty} \mu^{\Omega}_m(X)\right)=\lambda \left(\liminf_{m \to \infty} \mu_m(\Omega)\right).
\]
\end{proof}
\begin{remark}
\label{rem:densities}
Recall that for a measure $\mu$ on a metric space $X$, and for $s \in (0,\infty)$, the upper and lower $s$-dimensional densities of $\mu$ at $x \in X$ are respectively defined, up to multiplication by a positive constant, by
\begin{equation}
\Theta^*_{s}(\mu,x) := \limsup_{\rho \to 0^+} \frac{\mu\left(B_\rho(x)\right)}{\rho^{s}}, \quad 
\Theta_{*s}(\mu,x) := \liminf_{\rho \to 0^+} \frac{\mu\left(B_\rho(x)\right)}{\rho^{s}}.
\end{equation}
If $\Theta^*_{s}(\mu,x) = \Theta_{*s}(\mu,x)$, then their common value is denoted by $\Theta_{s}(\mu,x)$.

We refer to the inequality in \eqref{eq:bound from below on the generalized upper density} as a \emph{density-type condition}, as it generalizes the usual upper density in the case where all measures in the sequence $\{\mu_n\}_{n\in\N}$ are equal to the same measure and $\zeta(E):=\left(\diam(E)/2\right)^s$.
\end{remark}
The following lemma is a generalization of Lemma \ref{lem:the upper density lemma for a sequence of Radon measures}. 
\begin{lemma}
\label{lem:the upper density lemma for a sequence of Radon measures1}
Let $X$ be a metric space. For each $n \in \mathbb{N}$, let $\mu_n$ be a Borel measure on $X$. Let $\{r_n\}_{n\in\N}$ be a sequence of positive numbers with $\lim_{n \to \infty} r_n = 0$. Assume the existence of an open set $\Omega \subset X$, a set $\Theta\subset X$ with $\psi(\Theta)=0$, and $\psi$-measurable function $g:\Omega\to [0,\infty]$, such that
\begin{equation}
\label{eq:bound from below on the generalized upper density(1)}
\Omega\setminus \Theta\subset \Set{x\in X}[\liminf_{n \to \infty} \left( \sup_{0 < \rho < r_n}
\frac{\mu_n\big(B_\rho(x)\big)}{\zeta\big(B_\rho(x)\big)}  \right)
 \geq g(x)].
\end{equation}
Then,
\begin{equation}
\label{eq:generalized density theorem for non-negative functions6}
\lambda \left( \liminf_{n \to \infty} \mu_n(\Omega)\right) \geq \int_{\Omega}g(x) \, d\psi(x).
\end{equation}
\end{lemma}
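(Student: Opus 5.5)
We reduce the statement to Lemma~\ref{lem:the upper density lemma for a sequence of Radon measures} by a layer-cake (simple function) approximation of $g$. The plan has four steps.

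\textbf{Step 1 (reduction to a simple function).} Fix $\beta\in(0,1)$. We approximate $g$ from below on a geometric scale. For $k\in\mathbb{Z}$ set
\[
E_k:=\{x\in\Omega\setminus\Theta:\ \beta^{k+1}<g(x)\le \beta^{k}\},\qquad E_\infty:=\{x\in\Omega\setminus\Theta:\ g(x)=\infty\}.
\]
These sets are pairwise disjoint and $\psi$-measurable, since $\Theta$ is $\psi$-null and hence $\psi$-measurable. Because $\psi(\Theta)=0$, we have
\[
\int_\Omega g\,d\psi=\sum_k\int_{E_k}g\,d\psi+\infty\cdot\psi(E_\infty)\le \beta^{-1}\sum_k\beta^{k}\psi(E_k)\cdot\beta+\infty\cdot\psi(E_\infty).
\]
More simply, it suffices to show
\[
\lambda\liminf_n\mu_n(\Omega)\ \ge\ \sum_{k} \beta^{k+1}\psi(E_k)
\]
and, if $\psi(E_\infty)>0$, that $\liminf_n\mu_n(\Omega)=\infty$. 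The latter follows from the former applied with arbitrarily large thresholds, since on $E_\infty$ the density bound exceeds every $t$. Letting $\beta\to1^-$ then gives the claim.

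\textbf{Step 2 (localizing to open sets).} Lemma~\ref{lem:the upper density lemma for a sequence of Radon measures} requires an open set, while the $E_k$ are only measurable. The key difficulty is additivity: we need disjoint open sets $U_k\supset$ (large pieces of) $E_k$ with $\sum_k\liminf_n\mu_n(U_k)\le\liminf_n\mu_n(\Omega)$. Note that superadditivity of $\liminf$ goes the wrong way only for infinitely many sets; for finitely many disjoint sets we have $\sum_{k=1}^K\liminf_n\mu_n(U_k)\le\liminf_n\sum_k\mu_n(U_k)\le\liminf_n\mu_n(\Omega)$. So we truncate to finitely many $k$, say $|k|\le K$, and let $K\to\infty$ at the end by monotone convergence.

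\textbf{Step 3 (this is the main obstacle).} We must find disjoint open sets. The plan is to pass to closed subsets first. If $\psi\llcorner\Omega$ restricted to the relevant pieces is inner regular by closed sets (true for Borel regular $\psi$ on sets of finite measure, possibly after intersecting with sets of finite $\psi$-measure, since $\psi$ is Borel regular as noted in the Preliminaries), choose closed $F_k\subset E_k$ with $\psi(E_k\setminus F_k)$ small. Finitely many disjoint closed sets can then be separated by disjoint open sets
\[
U_k=\{x\in\Omega:\ \dist(x,F_k)<\dist(x,\textstyle\bigcup_{j\ne k}F_j)\},
\]
which are open, pairwise disjoint, and contained in $\Omega$. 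If finiteness of $\psi(E_k)$ fails, we handle that case separately: either use $\sigma$-finiteness arguments, or note that an infinite-measure piece forces the right-hand side of Lemma~\ref{lem:the upper density lemma for a sequence of Radon measures} applied directly on $\Omega$ with $t=\beta^{k+1}$ to be infinite, giving $\liminf_n\mu_n(\Omega)=\infty$.

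\textbf{Step 4 (conclusion).} Apply Lemma~\ref{lem:the upper density lemma for a sequence of Radon measures} to $A=F_k$, $t=\beta^{k+1}$ and the open set $U_k\supset F_k$. The density hypothesis holds on $F_k$ because $F_k\subset\Omega\setminus\Theta$ and $g>\beta^{k+1}$ there. This gives
\[
\beta^{k+1}\psi(F_k)\le\lambda\liminf_n\mu_n(U_k).
\]
Summing over $|k|\le K$, using disjointness as in Step 2, letting the approximation error tend to zero, then $K\to\infty$, and finally $\beta\to1^-$, yields \eqref{eq:generalized density theorem for non-negative functions6}.
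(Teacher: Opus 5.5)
Your proposal is correct and follows essentially the same route as the paper. Both arguments approximate $g$ from below by a simple function on finitely many disjoint $\psi$-measurable pieces and shrink each piece to a closed subset via Borel regularity (you first dispose of infinite-measure pieces by applying Lemma~\ref{lem:the upper density lemma for a sequence of Radon measures} on $\Omega$ itself). They then separate the closed sets by disjoint open subsets of $\Omega$, apply Lemma~\ref{lem:the upper density lemma for a sequence of Radon measures} to each, and sum using finite superadditivity of $\liminf$. The differences are only cosmetic: you use geometric layers $E_k$ instead of an arbitrary simple $h\le g$, and an explicit distance-function separation instead of normality. You also treat $\Theta$ and $\{g=\infty\}$ more carefully than the paper, which glosses over them. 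The first display in your Step 1 is garbled, but the reduction you state right after it is correct.
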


\begin{proof}
The idea is to first prove inequality~\eqref{eq:generalized density theorem for non-negative functions6} for simple functions $h$ that are bounded above by $g$, and then use the definition of the Lebesgue integral to derive inequality~\eqref{eq:generalized density theorem for non-negative functions6}.  Let $h = \sum_{i=1}^k \beta_i \chi_{B_i}$ be a function where $k \in \mathbb{N}$, the $\beta_i$ are positive numbers, the $B_i \subset \Omega$ are pairwise disjoint $\psi$-measurable sets, and $\chi_{B_i}$ denotes the characteristic function of $B_i$. Assume that $g(x) \geq h(x)$ for $\psi$-almost every $x \in \Omega$. In particular, for each $i = 1, \dots, k$, we have $g(x) \geq \beta_i$ for $\psi$-almost every $x \in B_i$. 

We may assume without loss of generality that $\liminf_{n \to \infty} \mu_n(\Omega) < \infty$, otherwise the inequality \eqref{eq:generalized density theorem for non-negative functions6} trivially holds. Therefore, by Lemma~\ref{lem:the upper density lemma for a sequence of Radon measures}, we obtain
\[
\beta_i \psi(B_i) \leq \lambda \left( \liminf_{n \to \infty} \mu_n(\Omega) \right) < \infty.
\]
Since $\psi$ is a Borel regular measure, $B_i$ is a $\psi$-measurable set and $\psi(B_i)< \infty$, for every $\epsilon > 0$, there exists a closed set $C_i \subset B_i$ such that $\psi(B_i \setminus C_i) < \frac{\epsilon}{\beta_i k}$. Since $C_i\subset B_i$, $C_i$ is $\psi$-measurable and $\psi(C_i)<\infty$, we obtain
\begin{equation}
\label{eq:approximating Bi from below by a closed set}
\psi(B_i)-\frac{\epsilon}{\beta_i k}< \psi(C_i).
\end{equation}
Note that since the sets $B_i$ are pairwise disjoint and $C_i \subset B_i$, the closed sets $C_i$ are also pairwise disjoint. As $X$ is a metric space, it is normal; hence, there exist pairwise disjoint open sets $U_i$ such that $C_i \subset U_i\subset \Omega$. Applying Lemma~\ref{lem:the upper density lemma for a sequence of Radon measures} again, we get
\begin{equation}
\label{eq:usage of previous lemma for Ci}
\lambda \left( \liminf_{n \to \infty} \mu_n(U_i) \right) \geq \beta_i \psi(C_i).
\end{equation}
Therefore, since $\mu_n$ is a Borel measure, we get from \eqref{eq:usage of previous lemma for Ci} and \eqref{eq:approximating Bi from below by a closed set}
\begin{multline}
\lambda \left( \liminf_{n \to \infty} \mu_n(\Omega) \right) 
\geq \lambda \left( \liminf_{n \to \infty} \mu_n\left( \bigcup_{i=1}^k U_i \right) \right)
= \lambda \left( \liminf_{n \to \infty} \sum_{i=1}^k \mu_n(U_i) \right) \\
\geq \sum_{i=1}^k \lambda \left( \liminf_{n \to \infty} \mu_n(U_i) \right)
\geq \sum_{i=1}^k \beta_i \psi(C_i)
\geq \sum_{i=1}^k \beta_i \left( \psi(B_i) - \frac{\epsilon}{\beta_i k} \right) 
\\
=\sum_{i=1}^k \beta_i \psi(B_i)-\epsilon= \int_{\Omega} h(x) \, d\psi(x) - \epsilon.
\end{multline}
Since $\epsilon > 0$ is arbitrary, we conclude that
\begin{equation}
\lambda \left( \liminf_{n \to \infty} \mu_n(\Omega) \right) \geq \int_{\Omega} h(x) \, d\psi(x).
\end{equation}
Therefore, by the definition of the Lebesgue integral (approximating $g$ from below by simple functions), we obtain \eqref{eq:generalized density theorem for non-negative functions6} for the $\psi$--measurable function $g$.
\end{proof}

\section{Fine properties of general functions with respect to the Carathéodory resulting measure $\psi$}

The following lemma is a consequence of Lemma~\ref{lem:estimate Hausdorff measure of sup of measures of balls} and is independent of the results proved subsequently. We will use it in the proof of Theorem~\ref{thm:Besov constant controls double average integral}.

\begin{lemma}
\label{lem:the upper density lemma for a sequence of Radon measures:limsupversion}
Let $X$ be a metric space. For every $n \in \mathbb{N}$, let $\mu_n$ be a Borel measure on
$X$. Let 
\begin{equation}
\label{eq:bound from below on the generalized upper density:limsupversion} 
A\subset \Set{x\in X}[\lim_{\delta\to 0^+}\limsup_{n \to \infty} \left( \sup_{0 < \rho < \delta}
\frac{ \mu_n\big(B_\rho(x)\big)}{ \zeta\big(B_\rho(x)\big)} \right)
 >0].
\end{equation} 
Assume the existence of an open set $\Omega$ containing $A$ such that 
\begin{equation}
\label{eq:convergence zero of mu n} 
\sum\limits_{n=1}^{\infty}\mu_n(\Omega)<\infty.
\end{equation} 
Then we have
\[
\psi(A)=0.
\]
\end{lemma}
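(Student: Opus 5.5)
Since the defining quantity for $A$ is positive, $A$ is the increasing union over $k\in\N$ of
\[
A_k:=\Set{x\in A}[\lim_{\delta\to 0^+}\limsup_{n\to\infty}\Bigl(\sup_{0<\rho<\delta}\frac{\mu_n(B_\rho(x))}{\zeta(B_\rho(x))}\Bigr)>1/k].
\]
By countable subadditivity of $\psi$ it suffices to show $\psi(A_k)=0$ for each fixed $k$. Set $t:=1/k$ and fix $\delta_0\in(0,\min\{I,\gamma\})$.

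Take $x\in A_k$ and any $\delta\in(0,\delta_0)$. The expression $\sup_{0<\rho<\delta}\mu_n(B_\rho(x))/\zeta(B_\rho(x))$ is nonincreasing in $\delta$, so its $\limsup_n$ is also nonincreasing in $\delta$. Hence the limit as $\delta\to0^+$ is an infimum, and for every $\delta>0$ we get $\limsup_n \sup_{0<\rho<\delta}(\cdots)>t$. Consequently, for each $N\in\N$ there is some $n\ge N$ with $x\in E_{n,\delta}$, where
\[
E_{n,\delta}:=\Set{x\in\Omega}[\sup_{\substack{0<\rho<\delta\\ B_\rho(x)\subset\Omega}}\frac{\mu_n(B_\rho(x))}{\zeta(B_\rho(x))}>t].
\]
The constraint $B_\rho(x)\subset\Omega$ costs nothing. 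Since $\Omega$ is open and $x\in A_k\subset\Omega$, the small balls around $x$ lie in $\Omega$, and shrinking $\delta$ below $\dist(x,X\setminus\Omega)$ is harmless because the property holds for every $\delta$. To avoid making $\delta$ depend on $x$, I split further: $A_k=\bigcup_m A_{k,m}$ with $A_{k,m}:=\{x\in A_k: B_{2/m}(x)\subset\Omega\}$, and I only use $\delta<1/m$. Then $A_{k,m}\subset\bigcup_{n\ge N}E_{n,\delta}$ for every $N$.

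Next apply Lemma~\ref{lem:estimate Hausdorff measure of sup of measures of balls} to each $E_{n,\delta}$ with the measure $\mu_n$ restricted to $\Omega$. This gives $\phi_{10\delta}(E_{n,\delta})\le \frac{\lambda}{t}\mu_n(\Omega)$. Since $\phi_{10\delta}$ is countably subadditive,
\[
\phi_{10\delta}(A_{k,m})\le \frac{\lambda}{t}\sum_{n\ge N}\mu_n(\Omega).
\]
By \eqref{eq:convergence zero of mu n}, the right-hand side is the tail of a convergent series, so letting $N\to\infty$ gives $\phi_{10\delta}(A_{k,m})=0$ for all small $\delta$. Letting $\delta\to0^+$ then yields $\psi(A_{k,m})=0$, and countable subadditivity gives $\psi(A)=0$.

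The main obstacle is the second paragraph: the limsup over $n$ must be converted into membership in infinitely many $E_{n,\delta}$ with a single $\delta$ that works uniformly in $x$. This is a Borel--Cantelli-type step, and it is exactly where summability of the $\mu_n(\Omega)$ replaces the $\liminf$ used in Lemma~\ref{lem:the upper density lemma for a sequence of Radon measures}. The remaining steps are routine, provided the ball-containment condition in Lemma~\ref{lem:estimate Hausdorff measure of sup of measures of balls} is handled by the decomposition into the sets $A_{k,m}$.
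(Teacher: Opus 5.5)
Your proof is correct and is essentially the paper's argument: the same $\Omega$-constrained superlevel sets (the paper's $A_{n,\beta,\delta}$ with $\beta=1/i$), Lemma~\ref{lem:estimate Hausdorff measure of sup of measures of balls} applied to each, countable subadditivity of $\phi_{10\delta}$ over the tail $n\ge N$, summability of $\mu_n(\Omega)$, then $\delta\to0^+$ and a countable union over the thresholds. Two small remarks: the supremum over $0<\rho<\delta$ is non\emph{decreasing} in $\delta$, not nonincreasing, although your conclusion that the limit as $\delta\to0^+$ is an infimum is still right; and the split into $A_{k,m}$ is harmless but unnecessary, because for fixed $x$ the constrained supremum is also monotone in $\delta$ and coincides with the unconstrained one for small $\delta$, so $x$ lies in infinitely many $E_{n,\delta}$ for every $\delta$ without any uniformity---this pointwise argument is how the paper handles it.
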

\begin{proof}
For $\beta>0$, $\delta>0$, and $n \in \mathbb{N}$, define
\begin{equation}
A_{n,\beta,\delta} := \Set{ x \in A }[\sup_{\substack{0 < \rho < \delta \\ B_\rho(x) \subset \Omega}} \frac{\mu_n\big(B_\rho(x)\big)}{\zeta\big(B_\rho(x)\big)}
 > \beta ].
\end{equation}
Define 
\begin{equation}
\label{eq:def of O}
O:=\bigcup_{\beta>0}\bigcap_{\delta>0}\bigcap_{k=1}^\infty\bigcup_{j=k}^\infty A_{j,\beta,\delta}.
\end{equation}
We will show that the property \eqref{eq:convergence zero of mu n} gives $\psi\left(O\right) = 0$, and the property \eqref{eq:bound from below on the generalized upper density:limsupversion} gives $A=O$. Therefore, $\psi(A)=\psi(O)=0$.

We prove $\psi(O)=0$. By Lemma \ref{lem:estimate Hausdorff measure of sup of measures of balls}
\begin{equation}
\label{estimate for 10dleta Haudorff measure of Anuiuiu}
\phi_{10\delta}(A_{n,\beta,\delta}) \leq \frac{\lambda}{\beta}
\mu_n(\Omega).
\end{equation}
Therefore, by $\sigma$-subaditivity of $\phi_{10\delta}$, estimate \eqref{estimate for 10dleta Haudorff measure of Anuiuiu}, and assumption \eqref{eq:convergence zero of mu n}, we obtain for $k\in\N$
\begin{equation}
\label{estimate for 10dleta Haudorff measure of Angguj}
\phi_{10\delta}\left(\bigcup_{j=k}^\infty A_{j,\beta,\delta}\right)\leq\sum\limits_{j=k}^\infty\phi_{10\delta}\left(A_{j,\beta,\delta}\right) \leq\frac{\lambda}{\beta}\sum\limits_{j=k}^{\infty}\mu_j(\Omega)<\infty.
\end{equation}
Thus, by \eqref{estimate for 10dleta Haudorff measure of Angguj}, we get
\begin{equation}
\label{estimate for 10dleta Haudorff measure of Anggujkklkl}
\phi_{10\delta}\left(\bigcap_{\xi>0}\bigcap_{k=1}^\infty\bigcup_{j=k}^\infty A_{j,\beta,\xi}\right)\leq \phi_{10\delta}\left(\bigcap_{k=1}^\infty\bigcup_{j=k}^\infty A_{j,\beta,\delta}\right) \leq\lim_{k\to\infty}\frac{\lambda}{\beta}\sum\limits_{j=k}^{\infty}\mu_j(\Omega)=0.
\end{equation}
Then, letting $\delta\to 0^+$ and using the definition $\lim_{\delta\to 0^+}\phi_\delta=\psi$, we obtain
\begin{equation}
\label{estimate for 10dleta Haudorff measure of Anggujkklkljh}
\psi\left(\bigcap_{\xi>0}\bigcap_{k=1}^\infty\bigcup_{j=k}^\infty A_{j,\beta,\xi}\right)=0.
\end{equation}
Therefore, since $A_{j,\beta',\xi}\subset A_{j,\beta,\xi}$ for $0<\beta\leq \beta'<\infty$, and fixed $j,\xi$, we get
\begin{equation}
\label{estimate for 10dleta Haudorff measure of Anggujkklkljhiuiuiuui}
\psi(O)=\psi\left(\bigcup_{\beta>0}\bigcap_{\xi>0}\bigcap_{k=1}^\infty\bigcup_{j=k}^\infty A_{j,\beta,\xi}\right)=\psi\left(\bigcup_{i=1}^\infty\bigcap_{\xi>0}\bigcap_{k=1}^\infty\bigcup_{j=k}^\infty A_{j,1/i,\xi}\right)=0.
\end{equation}

We prove $A=O$. By the definition of $O$, we get $O\subset A$. For the opposite inclusion, let $x\in A$.  Assume by contradiction that $x\notin O$. Then $x\notin \bigcap_{\xi>0}\bigcap_{k=1}^\infty\bigcup_{j=k}^\infty A_{j,\beta,\xi}$ for every $\beta>0$. Then, there exist $\xi,k$ such that $x\notin \bigcup_{j=k}^\infty A_{j,\beta,\xi}$. Therefore, for every natural number $j\geq k$
\begin{equation}
\sup_{\substack{0 < \rho < \xi \\ B_\rho(x) \subset \Omega}} \frac{\mu_j\big(B_\rho(x)\big)}{\zeta\big(B_\rho(x)\big)}
\leq \beta.
\end{equation}
Therefore,
\begin{multline}
\lim_{\delta \to 0^+}\limsup_{j\to \infty}\left(\sup_{0 < \rho < \delta} \frac{\mu_j\big(B_\rho(x)\big)}{\zeta\big(B_\rho(x)\big)}\right)
= 
\lim_{\delta \to 0^+}\limsup_{j\to \infty}\left(\sup_{\substack{0 < \rho < \delta \\ B_\rho(x) \subset \Omega}} \frac{\mu_j\big(B_\rho(x)\big)}{\zeta\big(B_\rho(x)\big)}\right)
\\
\leq \limsup_{j\to \infty}\left(\sup_{\substack{0 < \rho < \xi \\ B_\rho(x) \subset \Omega}} \frac{\mu_j\big(B_\rho(x)\big)}{\zeta\big(B_\rho(x)\big)}\right)
\leq \beta.
\end{multline}
Since $\beta>0$ is arbitrarily small, we get that 
\begin{equation}
\lim_{\delta \to 0^+}\limsup_{j\to \infty}\left(\sup_{0 < \rho < \delta} \frac{\mu_j\big(B_\rho(x)\big)}{\zeta\big(B_\rho(x)\big)}\right)=0.
\end{equation}
This contradicts \eqref{eq:bound from below on the generalized upper density:limsupversion}.
This completes the proof.
\end{proof}

The following set-theoretic lemma will help us easily recognize the Borel measurability of functions that appear in the sequel.
\begin{lemma}
\label{lem:uper semicontinuity of h}
Let $X$ be a metric space. Let $\Sigma$ denote the family of all closed balls in $X$, and let $F:\Sigma\to [0,\infty)$ be a set function satisfying the following two properties: for $A,B\in \Sigma$, if $A\subset B$, then $F(A)\leq F(B)$; and for every $x\in X$ and $r_0\in (0,\infty)$,
\begin{equation}
\label{eq:right-continuous property}
\lim_{r\downarrow r_0}F(B_r(x))=F(B_{r_0}(x)).
\end{equation}
Fix $r_0\in (0,\infty)$ and define
\begin{equation}
h(x):=F(B_{r_0}(x)).
\end{equation}
Then $h$ is upper semicontinuous and therefore Borel measurable. 
\end{lemma}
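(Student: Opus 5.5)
To show that $h$ is upper semicontinuous, I will check that for every $x_0\in X$ and every $\e>0$ there is a $\delta>0$ such that $h(x)<h(x_0)+\e$ whenever $d(x,x_0)<\delta$. Equivalently, I will show that $\limsup_{x\to x_0}h(x)\le h(x_0)$. Borel measurability then follows, because each superlevel set $\{h\ge a\}$ is closed, so each sublevel set $\{h<a\}$ is open.

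\textbf{Step 1 (choose a slightly larger radius).} Fix $x_0\in X$ and $\e>0$. The right-continuity property \eqref{eq:right-continuous property}, applied at the center $x_0$ and radius $r_0$, gives some $r_1>r_0$ with
\[
F\bigl(B_{r_1}(x_0)\bigr)<F\bigl(B_{r_0}(x_0)\bigr)+\e=h(x_0)+\e .
\]

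\textbf{Step 2 (containment of balls).} Set $\delta:=r_1-r_0>0$ and take $x$ with $d(x,x_0)<\delta$. For any $z\in B_{r_0}(x)$ the triangle inequality gives
\[
d(z,x_0)\le d(z,x)+d(x,x_0)<r_0+\delta=r_1 ,
\]
so $B_{r_0}(x)\subset B_{r_1}(x_0)$. Both sets are closed balls, so $F$ is defined on both, and the monotonicity of $F$ yields
\[
h(x)=F\bigl(B_{r_0}(x)\bigr)\le F\bigl(B_{r_1}(x_0)\bigr)<h(x_0)+\e .
\]
This is exactly the upper semicontinuity inequality at $x_0$.

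\textbf{Step 3 (measurability).} For each $a\in\R$, the set $\{x: h(x)<a\}$ is open by Step 2. Indeed, if $h(x_0)<a$, choose $\e=a-h(x_0)$; then the $\delta$-neighborhood of $x_0$ lies in the set. Hence $h$ is Borel measurable.

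No step is a real obstacle. The only subtlety is to apply right-continuity at the fixed center $x_0$ rather than at the varying center $x$. Note also that a closed ball $B_r(x)$ is regarded as the pair (center, radius) through the notation; if two different pairs give the same set, $F$ still takes a single value on it, and the argument above uses only monotonicity under inclusion of sets, so this causes no issue.
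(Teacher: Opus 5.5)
Your proof is correct and follows the same route as the paper: the paper argues sequentially (for $x_j\to x_0$, eventually $B_{r_0}(x_j)\subset B_{r_0+\delta}(x_0)$, then it lets $\delta\downarrow 0$ using right-continuity at the fixed center $x_0$). You give the equivalent $\varepsilon$--$\delta$ version with the same ball containment, monotonicity, and right-continuity at $x_0$, and your Borel measurability step (open sublevel sets) matches the paper's (closed superlevel sets).
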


\begin{proof}
Let $\{x_j\}_{j\in\N}\subset X$ and $x_0\in X$ such that $x_j\to x_0$ as $j\to \infty$. Fix $\delta>0$. Then there exists $j_0\in\N$ such that 
\[
B_{r_0}(x_j)\subset B_{r_0+\delta}(x_0)
\quad \text{for every } j\geq j_0.
\]
Hence, by the definition of $h$ and the monotonicity of $F$, we obtain
\begin{equation}
\label{eq:mono of F}
\limsup_{j\to \infty}h(x_j)
=\limsup_{j\to \infty}F(B_{r_0}(x_j))
\leq F(B_{r_0+\delta}(x_0)).
\end{equation}
Letting $\delta\downarrow 0$ and using the assumed right-continuity of $r\mapsto F(B_r(x_0))$, we conclude that
\[
\limsup_{j\to \infty}h(x_j)\leq h(x_0).
\]
Thus, $h$ is upper semicontinuous. Any upper semicontinuous function $f$ from topological space $Z$ to $\R$ is Borel measurable, since upper semicontinuity implies that the upper level sets 
$$
\Set{z\in Z}[f(z)\ge a]=f^{-1}\left([a,\infty)\right)
$$
are closed for every $a\in\mathbb{R}$, and hence Borel. Therefore, $f$ is a Borel function.
\end{proof}

\begin{corollary}
\label{cor:double average is Borel}
Let $X$ be a metric space. Let $\mu$ be a Borel measure on $X$ such that $0<\mu(B)<\infty$ for every closed ball $B=B_\sigma(x)$, $x\in X$, $\sigma\in (0,\infty)$. Let $0\leq W\in L^1_{\mathrm{loc}}(X\times X,\mu\times \mu)$. Define
\begin{equation}
\label{eq:double average is Borel}
G(x):=\fint_{B_{\sigma}(x)}\fint_{B_{\sigma}(x)}W(z,y)\,d\mu(z)\,d\mu(y).
\end{equation}
Then $G$ is Borel measurable.
\end{corollary}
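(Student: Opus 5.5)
The plan is to apply Lemma~\ref{lem:uper semicontinuity of h} to a suitable set function on closed balls, and then write $G$ as a quotient of two Borel functions.

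Define, for a closed ball $B\in\Sigma$,
\[
F_1(B):=\int_{B}\int_{B}W(z,y)\,d\mu(z)\,d\mu(y)=(\mu\times\mu)\text{-integral of }W\chi_{B\times B},\qquad F_2(B):=\mu(B).
\]
Both are finite and nonnegative. For $F_1$ this follows from $W\in L^1_{\mathrm{loc}}(X\times X,\mu\times\mu)$, since $B\times B$ is contained in a closed ball of the product; I would use the max metric on $X\times X$, so that $B\times B$ is itself a closed ball there. For $F_2$ it follows from the hypothesis on $\mu$. Both are monotone under inclusion because $W\ge 0$.

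Next comes right-continuity \eqref{eq:right-continuous property}. Fix $x$ and $r_0$, and let $r\downarrow r_0$. The closed balls $B_r(x)$ decrease to $\bigcap_{r>r_0}B_r(x)=B_{r_0}(x)$, because the balls are closed, i.e.\ $\{d(x,\cdot)\le r\}$. Hence $B_r(x)\times B_r(x)$ decreases to $B_{r_0}(x)\times B_{r_0}(x)$. By dominated convergence, with dominating function $W\chi_{B_{r_0+1}(x)\times B_{r_0+1}(x)}$, which is integrable, we get $F_1(B_r(x))\to F_1(B_{r_0}(x))$. Similarly, continuity from above of $\mu$ (the measure is finite on $B_{r_0+1}(x)$) gives $F_2(B_r(x))\to F_2(B_{r_0}(x))$.

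Applying Lemma~\ref{lem:uper semicontinuity of h} then shows that $h_1(x):=F_1(B_\sigma(x))$ and $h_2(x):=\mu(B_\sigma(x))$ are upper semicontinuous, hence Borel. Since $h_2>0$ everywhere, $G=h_1/h_2^2$ is Borel as a quotient of Borel functions.

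The only genuinely delicate point is justifying that $F_1$ is well defined and finite. This means checking that $W\chi_{B\times B}$ is $(\mu\times\mu)$-integrable, using the notion of $L^1_{\mathrm{loc}}$ on the product. I would handle it by observing that $B\times B$ lies in a closed ball of the product space, and that Fubini/Tonelli identifies the iterated integral with the product integral for the nonnegative function $W$.
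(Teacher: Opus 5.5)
Your proposal is correct and is essentially the paper's proof. It uses the same set functions $F_1(B)=\int_B\int_B W\,d\mu\,d\mu$ and $F_2(B)=\mu(B)$, obtains right-continuity from the closedness of the balls together with local integrability of $W$ and finiteness of $\mu$ on balls, applies Lemma~\ref{lem:uper semicontinuity of h} to get upper semicontinuity, and concludes with $G=F_1/F_2^2$. Your explicit dominating function and the finiteness check via $B\times B$ lying in a ball of $X\times X$ simply spell out details the paper leaves implicit.
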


\begin{proof}
Define
\[
F_1(B_{\sigma}(x))
:=\int_{B_{\sigma}(x)}\int_{B_{\sigma}(x)}W(z,y)\,d\mu(z)\,d\mu(y),
\qquad
F_2(B_{\sigma}(x))
:=\mu(B_{\sigma}(x)).
\]
Both $F_1$ and $F_2$ satisfy the two assumptions of Lemma~\ref{lem:uper semicontinuity of h}; the property \eqref{eq:right-continuous property} holds since the balls $B_{\sigma}(x)$ are closed, $\mu$ is finite on balls and $W$ is locally integrable. More precisely,
\begin{equation}
\lim_{\sigma\downarrow \sigma_0}F_1(B_\sigma(x))=\int_{\bigcap\limits_{\sigma>\sigma_0}B_{\sigma}(x)\times B_{\sigma}(x)}W(z,y)\,d\left(\mu\times \mu\right)(z,y)=F_1(B_{\sigma_0}(x));
\end{equation}
\begin{equation}
\lim_{\sigma\downarrow \sigma_0}F_2(B_\sigma(x))=\mu\left(\bigcap\limits_{\sigma>\sigma_0}B_{\sigma}(x)\right)=F_2(B_{\sigma_0}(x)).
\end{equation}
Hence, the functions
\[
x\mapsto F_1(B_{\sigma}(x))
\quad \text{and} \quad
x\mapsto F_2(B_{\sigma}(x))
\]
are upper semicontinuous and therefore Borel measurable. As $\mu(B_{\sigma}(x))>0$ for every $x\in X$, we conclude that
\[
G(x)=\frac{F_1(B_{\sigma}(x))}{F_2(B_{\sigma}(x))^2}
\]
is Borel measurable.
\end{proof}

The following theorem provides an abstract framework for establishing certain fine properties of a general function $W$ with respect to any measure $\psi$ obtained via Carathéodory’s construction. In the sequel, we will choose $W(z,y):=\rho(u(z),u(y))^q$ and take $\psi$ to be the Hausdorff measure in the case of Besov functions $u\in B^r_{q,\infty}$.
\begin{theorem}
\label{thm:Besov constant controls double average integral}
Let $X$ be a metric space and let $\Omega \subset X$ be an open subset. 
Let $\mu$ be an Ahlfors $s$-regular measure on $\Omega$ for some $s \in (0,\infty)$. Consider any sequences $(\varepsilon_n)_{n\in\mathbb{N}}$ and $(\eta_n)_{n\in\mathbb{N}}$ with 
$\varepsilon_n > 0$, $\eta_n > 0$ and $\lim_{n\to\infty} \varepsilon_n = 0$, and let 
$W \in L^1_{\mathrm{loc}}(\Omega \times \Omega, \mu \times \mu)$ with $W \geq 0$. Finally,
assume that $\zeta$ satisfies, in addition to \eqref{eq:property of zeta} and \eqref{eq:property of zeta1} the following:
\begin{enumerate}
\item There exist constants $\lambda' \in (0,\infty)$ and $\gamma' \in (0,\infty]$ such that for every $x \in X$ and every radius $0 < R < \gamma'$, we have
\begin{equation}
\label{eq:property of zeta3}
\zeta(B_{R}(x)) \leq \lambda'\, \zeta(B_{R}(y))\quad\forall y\in B_{R/2}(x).
\end{equation}

\item For every $R \in (0,\infty)$ 
\begin{equation}
\label{eq:property of zeta5}
f_R(x):=\zeta(B_R(x)) \quad\text{is $\mu$-measurable in $X$}.
\end{equation}
\end{enumerate}
Then:
\\
1. There exists a constant
$C(\zeta,\mu)$, depending on $\zeta,\mu$ only, such that
\begin{multline}
\label{eq:Besov constant is more that integral of lower limiting double integralgen}
\liminf\limits_{n\to\infty}\left(\eta_n\int_\Omega\frac{1}{\e^s_n}\zeta\big(B_{\e_n}(x)\big)\Bigg\{\fint_{B_{\e_n}(x)}\chi_\Omega(
z) W(z,x)d\mu(z)\Bigg\}d\mu(x)\right)
\\
\geq C(\zeta,\mu)\int_\Omega \Bigg\{\liminf\limits_{n\to
\infty}\eta_n\fint_{B_{\e_n/2}(x)}\fint_{B_{\e_n/2}(x)} W(z,y)d\mu(z)d\mu(y)\Bigg\}d\psi(x)\,.
\end{multline}
\\
2. If the left hand side of \eqref{eq:Besov constant is more that integral of lower limiting double integralgen} is finite, then the set $D\subset \Omega$ defined by
\begin{equation}
D=\Set{x\in \Omega}[\liminf\limits_{n\to
\infty}\left(\eta_n\fint_{B_{\e_n/2}(x)}\fint_{B_{\e_n/2}(x)} W(z,y)d\mu(z)d\mu(y)\right)>0]
\end{equation}
is $\sigma$-finite with respect to $\psi$, and for every $x\in \Omega\setminus D$, we have
\begin{equation}
\label{eq:Lebesgue points for Wgen}
\liminf\limits_{n\to
\infty}\left(\eta_n\fint_{B_{\e_n/2}(x)}\fint_{B_{\e_n/2}(x)} W(z,y)d\mu(z)d\mu(y)\right)=0.
\end{equation} 
\\
3. If we have
\begin{equation}
\label{eq:Besov constant is more that integral of lower limiting double integralgensum}
\sum\limits_{n=1}^{\infty}\left(\eta_n\int_\Omega\frac{1}{\e^s_n}\zeta\big(B_{\e_n}(x)\big)\Bigg\{\fint_{B_{\e_n}(x)}\chi_\Omega(
z) W(z,x)d\mu(z)\Bigg\}d\mu(x)\right)
<\infty\,,
\end{equation}
then the set $A\subset \Omega$ defined by
\begin{equation}
\label{eq:def of A}
A:=\Set{x\in \Omega}[\limsup_{n\to\infty}\left(\eta_n\fint_{B_{\e_n/2}(x)}\fint_{B_{\e_n/2}(x)} W(z,y)d\mu(z)d\mu(y)\right)>0]
\end{equation}
is negligible with respect to $\psi$, i.e. $\psi(A)=0$. In particular, for $\psi$--almost every $x\in \Omega$, we have
\begin{equation}
\label{eq:Lebesgue points for Wgensup}
\lim\limits_{n\to
\infty}\left(\eta_n\fint_{B_{\e_n/2}(x)}\fint_{B_{\e_n/2}(x)} W(z,y)d\mu(z)d\mu(y)\right)=0.
\end{equation} 
\end{theorem}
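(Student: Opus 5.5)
The plan is to define measures $\mu_n$ on $X$ that encode the quantity on the left of \eqref{eq:Besov constant is more that integral of lower limiting double integralgen}, and then apply Lemma~\ref{lem:the upper density lemma for a sequence of Radon measures1} (for part 1) and Lemma~\ref{lem:the upper density lemma for a sequence of Radon measures:limsupversion} (for part 3). Concretely, set
\[
d\mu_n(x):=\eta_n\,\frac{\zeta(B_{\e_n}(x))}{\e_n^s}\left(\fint_{B_{\e_n}(x)}\chi_\Omega(z)W(z,x)\,d\mu(z)\right)\chi_\Omega(x)\,d\mu(x).
\]
The density is $\mu$-measurable by \eqref{eq:property of zeta5} and a Fubini/upper-semicontinuity argument in the spirit of Lemma~\ref{lem:uper semicontinuity of h}. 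Then $\mu_n(\Omega)$ is exactly the $n$-th term on the left-hand side.

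The core estimate is a pointwise density bound. For $x\in\Omega$ and $n$ large, with $B_{\e_n/2}(x)\subset\Omega$, we want to compare $\mu_n(B_{\e_n/2}(x))/\zeta(B_{\e_n/2}(x))$ from below with $\eta_n\fint_{B_{\e_n/2}(x)}\fint_{B_{\e_n/2}(x)}W\,d\mu\,d\mu$.

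The first step is the geometric inclusion. For $y\in B_{\e_n/2}(x)$ we have $B_{\e_n/2}(x)\subset B_{\e_n}(y)$. Together with Ahlfors regularity, which gives $\mu(B_{\e_n}(y))\le C\e_n^s$, this yields
\[
\fint_{B_{\e_n}(y)}\chi_\Omega(z) W(z,y)\,d\mu(z)\ \ge\ \frac{1}{C\e_n^s}\int_{B_{\e_n/2}(x)}W(z,y)\,d\mu(z).
\]
The second step handles $\zeta$. Using \eqref{eq:property of zeta3} (with $R=\e_n$ and $y\in B_{\e_n/2}(x)$), then monotonicity and \eqref{eq:property of zeta}, we get $\zeta(B_{\e_n}(y))\ge \zeta(B_{\e_n}(x))/\lambda'\ge \zeta(B_{\e_n/2}(x))/\lambda'$. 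The third step integrates over $y\in B_{\e_n/2}(x)$ and uses $\mu(B_{\e_n/2}(x))^2\ge c^2(\e_n/2)^{2s}$. This gives
\[
\frac{\mu_n(B_{\e_n/2}(x))}{\zeta(B_{\e_n/2}(x))}\ \ge\ C'(\zeta,\mu)\,\eta_n\fint_{B_{\e_n/2}(x)}\fint_{B_{\e_n/2}(x)}W(z,y)\,d\mu(z)\,d\mu(y).
\]
Since $\e_n/2<r_n:=\e_n$, the supremum over $0<\rho<r_n$ dominates the left-hand side.

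For part 1, pass to a subsequence realizing the $\liminf$ of $\mu_n(\Omega)$; the $\liminf$ of the right-hand side only increases along it. Take $g(x)$ equal to $C'$ times the $\liminf$ of the double averages; this is Borel by Corollary~\ref{cor:double average is Borel}. Then apply Lemma~\ref{lem:the upper density lemma for a sequence of Radon measures1} with $\Theta=\emptyset$, and put $C=C'/\lambda$.

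For part 2, write $D=\bigcup_k\{g>1/k\}$. Applying part 1 with $g$ replaced by $\frac1k\chi_{\{g>1/k\}}$ gives $\psi(\{g>1/k\})\le kC^{-1}\cdot\mathrm{LHS}<\infty$. Points outside $D$ satisfy \eqref{eq:Lebesgue points for Wgen} trivially.

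For part 3, the density bound shows that every $x\in A$ satisfies the hypothesis of Lemma~\ref{lem:the upper density lemma for a sequence of Radon measures:limsupversion}. Indeed, for every $\delta>0$, eventually $\e_n/2<\delta$, so the limsup in $n$ of the sup over $\rho<\delta$ is at least $C'$ times the limsup in \eqref{eq:def of A}, which is positive. Combined with $\sum_n\mu_n(\Omega)<\infty$, this gives $\psi(A)=0$.

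The main obstacle is the careful bookkeeping in the pointwise estimate. One issue is requiring $\e_n<\gamma,\gamma',\diam X$ so that the Ahlfors bounds and the $\zeta$-comparisons apply. Another is that $\zeta(B_{\e_n}(y))$ must be compared with $\zeta(B_{\e_n/2}(x))$ in the correct direction. The remaining issue is measurability of the density of $\mu_n$, so that $\mu_n$ is a genuine Borel measure; I would handle this via \eqref{eq:property of zeta5} and Tonelli.
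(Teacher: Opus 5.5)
Your architecture matches the paper's. You use the same measures $\mu_n$ with density $\eta_n H_n$, then Lemma~\ref{lem:the upper density lemma for a sequence of Radon measures1} for part~1 (passing to a subsequence there is harmless but unnecessary), Chebyshev for part~2, and Lemma~\ref{lem:the upper density lemma for a sequence of Radon measures:limsupversion} for part~3.

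\textbf{The gap.} It is in the core density estimate. You test the quotient on $B_{\e_n/2}(x)$, so you must bound the denominator $\zeta(B_{\e_n/2}(x))$ from above by $\zeta(B_{\e_n}(x))$. You justify $\zeta(B_{\e_n}(x))\ge\zeta(B_{\e_n/2}(x))$ by ``monotonicity and \eqref{eq:property of zeta}''. Monotonicity of $\zeta$ is not a hypothesis of the theorem, and none of the stated conditions supplies it:
\begin{itemize}
\item \eqref{eq:property of zeta} bounds the larger ball above by the smaller one, $\zeta(B_{5R})\le\lambda\zeta(B_R)$, which is the wrong direction;
\item \eqref{eq:property of zeta3} only compares balls of the same radius.
\end{itemize}
The step genuinely fails. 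In $\R^N$ with Lebesgue measure, take $\zeta(B_R(x)):=R^{a}\tau(R)$ with $\tau>0$ and $\tau(5R)=\tau(R)$. Then \eqref{eq:property of zeta1}, \eqref{eq:property of zeta}, \eqref{eq:property of zeta3} and \eqref{eq:property of zeta5} all hold, with $\lambda=5^a$ and $\lambda'=1$. Yet $\tau$ can be chosen so that $\tau(\e_n)/\tau(\e_n/2)\to0$ along a suitable $\e_n\to0$. For $W\equiv1$ your displayed lower bound for $\mu_n(B_{\e_n/2}(x))/\zeta(B_{\e_n/2}(x))$ then fails for every fixed $C'>0$. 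For the diameter-based gauges used later in the paper, $\zeta$ is monotone on small balls, so the applications are unaffected. The theorem as stated, however, is not proved by your argument.

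\textbf{The fix.} The paper's route repairs this. Test the quotient on $B_{\e_n}(x)$ instead: take the supremum over $0<\rho<r_n:=2\e_n$ and use $\rho=\e_n$.
\begin{enumerate}
\item Monotonicity of the measure $\mu_n$ gives $\mu_n(B_{\e_n}(x))\ge\mu_n(B_{\e_n/2}(x))$.
\item Property \eqref{eq:property of zeta3} with $R=\e_n$ gives $\zeta(B_{\e_n}(y))\ge\zeta(B_{\e_n}(x))/\lambda'$ for $y\in B_{\e_n/2}(x)$. The denominator $\zeta(B_{\e_n}(x))$ therefore cancels without ever comparing $\zeta$ at two different radii.
\item Your remaining steps then go through unchanged: the inclusion $B_{\e_n/2}(x)\subset B_{\e_n}(y)$ and the two Ahlfors bounds. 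For the upper bound on $\mu(B_{\e_n}(y))$, take $n$ large enough that $B_{3\e_n/2}(x)\subset\Omega$, not merely $B_{\e_n/2}(x)\subset\Omega$.
\end{enumerate}
This yields
\[
\frac{\mu_n(B_{\e_n}(x))}{\zeta(B_{\e_n}(x))}\ \ge\ \frac{c^2}{\lambda' C\,2^{2s}}\,\eta_n\fint_{B_{\e_n/2}(x)}\fint_{B_{\e_n/2}(x)}W\,d\mu\,d\mu,
\]
which is exactly the paper's constant. Parts 1--3 then follow as you describe; in part~3, note that $\e_n<\delta$ for all large $n$.
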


\begin{proof}
{\it Proof of part 1:}
We prove inequality \eqref{eq:Besov constant is more that integral of lower limiting double integralgen}. Since $\mu$ is Ahlfors $s$-regular, there exist constants $c,C$ such that for every $x\in \Omega$ and for every $0<r<\infty$ with $B_r(x)\subset \Omega$, we have:
\begin{equation}
\label{eq:Ahlfors conditiongen}
cr^s\leq \mu(B_r(x))\leq Cr^s.
\end{equation}
Note that for every $x,y\in \Omega$ and $n\in\N$, we get from condition \eqref{eq:Ahlfors conditiongen}
\begin{equation}
\label{eq:use of Ahlfors conditiongen}
\frac{1}{\e_n^s}=\frac{1}{2^s(\e_n/2)^s}\geq \frac{c}{2^s\mu(B_{\e_n/2}(x))},\qquad
\frac{1}{\mu(B_{\e_n}(y))}\geq \frac{1}{C\e_n^s}\geq \frac{c}{C\,2^s\mu(B_{\e_n/2}(x))}.
\end{equation}
Observe that for every $x\in\Omega$, we have $B_{(3/2)\e_n}(x)\subset \Omega$ for all sufficiently large $n$. Hence, if $y\in B_{\e_n/2}(x)$, then $B_{\e_n}(y)\subset B_{(3/2)\e_n}(x)$,
and consequently $\chi_{\Omega}(z)=1$ for every $z\in B_{\e_n}(y)$.
Set 
\begin{equation}
\label{eq:def of Hn}
H_n(y):=\frac{1}{\e^s_n}\zeta\big(B_{\e_n}(y)\big)\Bigg\{\fint_{B_{\e_n}(y)}\chi_\Omega(z) W(z,y)d\mu(z)\Bigg\}.
\end{equation}
For every $x\in\Omega$, for every sufficiently large $n$ such that $B_{(3/2)\e_n}(x)\subset \Omega$ and $\e_n<\gamma'$, and using property \eqref{eq:property of zeta3}, we obtain
\begin{multline}
\label{eq:estimate from below for a density termgen}
\sup\limits_{0<\rho<
2\e_n}\frac{1}{\zeta\big(B_{\rho}(x)\big)}\int_{B_{\rho}(x)}H_n(y)d\mu(y)
\geq \frac{1}{\zeta\big(B_{\e_n}(x)\big)}\int_{B_{\e_n}(x)}H_n(y)d\mu(y)
\\
\geq \frac{1}{\zeta\big(B_{\e_n}(x)\big)}\int_{B_{\e_n/2}(x)}H_n(y)d\mu(y)
\geq \frac{1}{\lambda'\e^s_n}\int_{B_{\e_n/2}(x)}\Bigg\{\fint_{B_{\e_n}(y)}W(z,y)d\mu(z)\Bigg\}d\mu(y).
\end{multline}
Note that if $y \in B_{\varepsilon_n/2}(x)$, then $B_{\varepsilon_n/2}(x) \subset B_{\varepsilon_n}(y)$.
Therefore, we get from \eqref{eq:estimate from below for a density termgen} and \eqref{eq:use of Ahlfors conditiongen}
\begin{multline}
\label{eq:estimate from below for a density term1gen3}
\sup\limits_{0<\rho<
2\e_n}\frac{1}{\zeta\big(B_{\rho}(x)\big)}\int_{B_{\rho}(x)}H_n(y)d\mu(y)
\geq \frac{c}{\lambda' C\,2^s}
\frac{1}{\e^s_n}\int_{B_{\e_n/2}(x)}\Bigg\{\fint_{B_{\e_n/2}(x)}W(z,y)d\mu(z)\Bigg\}d\mu(y)
\\
\geq \frac{c^2}{\lambda'C\,2^{2s}}
\fint_{B_{\e_n/2}(x)}\Bigg\{\fint_{B_{\e_n/2}(x)}W(z,y)d\mu(z)\Bigg\}d\mu(y).
\end{multline}
Set $C':=C'(\zeta,\mu):=\frac{c^2}{\lambda'C\,2^{2s}}$.
Multiplying \eqref{eq:estimate from below for a density term1gen3} by $\eta_n$ and taking the lower limit as $n\to \infty$, we obtain
\begin{multline}
\label{eq:estimate from below for a density term1gen}
\liminf\limits_{n\to \infty}
\Bigg(\sup\limits_{0<\rho<
2\e_n}\frac{\eta_n}{\zeta\big(B_{\rho}(x)\big)}\int_{B_{\rho}(x)}H_n(y)d\mu(y)\Bigg)
\\
\geq C'\liminf\limits_{n\to \infty}\left(\eta_n
\fint_{B_{\e_n/2}(x)}\Bigg\{\fint_{B_{\e_n/2}(x)}W(z,y)d\mu(z)\Bigg\}d\mu(y)\right).
\end{multline}
Let us denote for each $n\in \N$ and Borel set $B\subset \Omega$
\begin{equation}
\label{mundefin}
\tilde{\mu}_n(B):=\int_{B}\eta_nH_n(y)d\mu(y),
\end{equation}
and for every set $E\subset X$, we define $\mu_n(E):=\inf\Set{\tilde{\mu}_n(B)}[B\,\,\text{is Borel},E\subset B]$. Note that $\tilde{\mu}_n(B)=\mu_n(B)$ for Borel sets $B$. In addition, for every $x\in X$ we denote
\begin{equation}
g(x):=C'\liminf\limits_{n\to \infty}\left(\eta_n\fint_{B_{\e_n/2}(x)}\Bigg\{\fint_{B_{\e_n/2}(x)}W(z,y)d\mu(z)\Bigg\}d\mu(y)\right).
\end{equation}
Thus, \eqref{eq:estimate from below for a density term1gen} reeds as
\begin{equation}
\liminf\limits_{n\to \infty}\Bigg(\sup\limits_{0<\rho<
2\e_n}\frac{\mu_n(B_{\rho}(x))}{\zeta(B_{\rho}(x))}\Bigg)\geq g(x),\quad \forall x\in \Omega.
\end{equation} 
Notice that $\mu_n$ is a Borel measure, meaning that it is an outer measure 
for which all Borel sets are measurable, and that, by Corollary \ref{cor:double average is Borel}, $g$ is $\psi$--measurable. Therefore, we get from Lemma \ref{lem:the upper density lemma for a sequence of Radon measures1}
\begin{equation}
\label{eq:generalized density theorem for non-negative functions8gen}
\lambda\left( \liminf_{n \to \infty} \mu_n(\Omega)\right) \geq \int_{\Omega}g(x) \, d\psi(x).
\end{equation}
It proves \eqref{eq:Besov constant is more that integral of lower limiting double integralgen} with $C(\zeta,\mu):=\frac{1}{\lambda }\frac{c^2}{\lambda'C\,2^{2s}}$. 
\\
{\it Proof of part 2:}
For the second part of the theorem, let us denote for every $l\in\N$
\begin{equation}
D_l:=\Set{x\in \Omega}[\liminf\limits_{n\to
\infty}\left(\eta_n\fint_{B_{\e_n/2}(x)}\fint_{B_{\e_n/2}(x)} W(z,y)d\mu(z)d\mu(y)\right)>1/l].
\end{equation}
Note that $D=\bigcup_{l=1}^\infty D_l$, and from \eqref{eq:Besov constant is more that integral of lower limiting double integralgen} and the finiteness assumption of its left part, we get by Chebyshev inequality
\begin{multline}
\psi(D_l)\leq l \int_{\Omega}\liminf\limits_{n\to
\infty}\left(\eta_n\fint_{B_{\e_n/2}(x)}\fint_{B_{\e_n/2}(x)} W(z,y)d\mu(z)d\mu(y)\right)d\psi(x)
\\
\leq l\,C(\zeta,\mu)^{-1}\liminf\limits_{n\to\infty}\left(\eta_n\int_\Omega H_n(x)d\mu(x)\right)
<\infty.
\end{multline}
It proves that $D$ is $\sigma$-finite with respect to $\psi$. We get \eqref{eq:Lebesgue points for Wgen} from the definition of the set $D$.
\\
{\it Proof of part 3:}
For the third part of the theorem, we again choose $\mu_n$ as in \eqref{mundefin}. Then by \eqref{eq:Besov constant is more that integral of lower limiting double integralgensum} and \eqref{mundefin} we have
\begin{equation}
\label{eq:Besov constant is more that integral of lower limiting double integralgensummu}
\sum\limits_{n=1}^{\infty}\mu_n(\Omega)
<\infty\,.
\end{equation}
By \eqref{eq:estimate from below for a density term1gen3} we have
for $2\e_n<\delta$
\begin{multline}
\label{eq:estimate from below for a density term1genjjkkjj}
\sup_{0 < \rho < \delta}
\frac{ \mu_n\big(B_\rho(x)\big)}{ \zeta\big(B_\rho(x)\big)}
\geq \sup\limits_{0<\rho<
2\e_n}\frac{1}{\zeta\big(B_{\rho}(x)\big)}\left(\int_{B_{\rho}(x)}\eta_nH_n(y)d\mu(y)\right)
\\
\geq C'\left(\eta_n
\fint_{B_{\e_n/2}(x)}\Bigg\{\fint_{B_{\e_n/2}(x)}W(z,y)d\mu(z)\Bigg\}d\mu(y)\right).
\end{multline}
In particular,
\begin{multline}
\label{eq:estimate from below for a density term1genjjkkjjth}
\lim_{\delta\to 0^+}\limsup_{n \to \infty}\left( \sup_{0 < \rho < \delta}
\frac{ \mu_n\big(B_\rho(x)\big)}{ \zeta\big(B_\rho(x)\big)} \right)
\\
\geq C'\limsup_{n \to \infty}\left(\eta_n
\fint_{B_{\e_n/2}(x)}\Bigg\{\fint_{B_{\e_n/2}(x)}W(z,y)d\mu(z)\Bigg\}d\mu(y)\right).
\end{multline}
From \eqref{eq:estimate from below for a density term1genjjkkjjth} we see that the assumption \eqref{eq:bound from below on the generalized upper density:limsupversion} of Lemma \ref{lem:the upper density lemma for a sequence of Radon measures:limsupversion} is fulfilled for $x\in A$, where $A$ is defined in \eqref{eq:def of A}, and we also have \eqref{eq:Besov constant is more that integral of lower limiting double integralgensummu}. Therefore, $\psi(A)=0$.
\end{proof}

\section{The set of points that are not general average Lebesgue points of functions in $B^r_{q,\infty}$ is $\sigma$-finite with respect to the Hausdorff measure $\mathcal{H}^{s-rq}$}

A particularly important case of Carathéodory's construction is the Hausdorff measure, which we define separately below:
\begin{definition}[Definition of the Hausdorff measure]
\label{def:Hausdorff measures}
Let $X$ be a metric space, $s\in [0,\infty)$, and $\delta\in (0,\infty]$. The $s$-dimensional approximating Hausdorff measure of size $\delta$ denoted as $\mathcal{H}^s_\delta$ is defined for every $E\subset X$ by
\begin{equation}
\label{eq:def of Haus. mea.}
\mathcal{H}^s_{\delta}(E):= \inf\Set{\sum_{i=1}^\infty \left(\diam(E_i)\right)^s}[E \subset \bigcup_{i=1}^\infty E_i,\, \diam(E_i) \leq \delta ].
\end{equation}
The $s$-dimensional Hausdorff measure is defined to be $\mathcal{H}^s(E)=\lim_{\delta\to 0^+}\mathcal{H}^s_{\delta}(E)$.
\end{definition}
As mentioned in the discussion of Carathéodory's construction, the resulting measure is Borel regular if the family $\mathcal{F}$ consists of Borel sets. The value of the measure in \eqref{eq:def of Haus. mea.} does not change if we restrict the sets $E_i$ to be closed or Borel. Indeed, the diameter of a set coincides with the diameter of its closure. Therefore, $\mathcal{H}^s$ is a Borel regular measure.

We now restate and prove Theorem~\ref{thm:sigma finiteness of limiting average with respect to Hausdorff measure,intro}.
\begin{theorem}
\label{cor:sigma finiteness of limiting average with respect to Hausdorff measure}
Let $X$ be a metric space, and let $\mu$ be an Ahlfors $s$-regular measure on $X$ for some $s\in (0,\infty)$. Let $(Y,\rho)$ be a metric space. Let $0\leq r\leq 1$, $0<q<\infty$ be such that $rq\leq s$, and let $u\in B^r_{q,\infty}(X,\mu;Y)$. 
Then, for any sequence $\e_n>0$ such that
$\lim_{n\to\infty}\e_n=0$ there exists a $\mathcal{H}^{s-rq}$
$\sigma$-finite set $D\subset X$, such that $\forall x\in X\setminus D$
\begin{equation}
\label{eq:fine property of BV^q funcions}
\liminf_{n\to \infty}\fint_{B_{\e_n}(x)}\fint_{B_{\e_n}(x)}\rho\left(u(z),u(y)\right)^qd\mu(y)d\mu(z)=0.
\end{equation}
\end{theorem}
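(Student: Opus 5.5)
The plan is to apply Theorem~\ref{thm:Besov constant controls double average integral}, part~2, with $\Omega=X$, $W(z,y):=\rho(u(z),u(y))^q$, and $\psi=\mathcal{H}^{s-rq}$. Concretely, take $\zeta(E):=(\diam E)^{s-rq}$ on Borel sets and $\eta_n\equiv 1$. To get balls of radius $\e_n$ in the conclusion (rather than $\e_n/2$), apply the theorem to the sequence $\e_n':=2\e_n$. Since $F_u\in L^q_{\mathrm{loc}}$, $W$ is locally integrable, as required.

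\textbf{Step 1: check the hypotheses on $\zeta$.} The delicate point is that $\diam B_R(x)$ may be smaller than $2R$ in a general metric space, so diameters of balls must be controlled from below. Ahlfors regularity gives this. For $R<\diam X$ (say $R$ small), $B_R(x)$ cannot be contained in a ball $B_\tau(x)$ with $C\tau^s<cR^s$. Choosing $\tau=\kappa R$ with $\kappa=(c/(2C))^{1/s}$, we get a point of $B_R(x)$ at distance $\geq\kappa R$ from $x$. Hence
\[
\kappa R\le \diam B_R(x)\le 2R .
\]
This gives the following.
\begin{itemize}
\item Property \eqref{eq:property of zeta1} holds for small $R$, since $\zeta(B_R(x))\in[(\kappa R)^{s-rq},(2R)^{s-rq}]$. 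If $s=rq$, then $\zeta\equiv1$ on nonempty sets, which is also fine.
\item The doubling property \eqref{eq:property of zeta} holds with $\lambda=(10/\kappa)^{s-rq}$ for $5R<\diam X$.
\item Property \eqref{eq:property of zeta3} holds with $\lambda'=(2/\kappa)^{s-rq}$.
\item Measurability \eqref{eq:property of zeta5} holds because $x\mapsto\diam B_R(x)$ is monotone and right-continuous in the sense of Lemma~\ref{lem:uper semicontinuity of h}, hence upper semicontinuous. Right-continuity holds since $\bigcap_{r>R}B_r(x)=B_R(x)$ for closed balls, and the diameter of a decreasing intersection is the limit of the diameters only up to this; a simpler fallback is to use that $\zeta$ is comparable to $R^{s-rq}$, since measurability is used only to integrate.
\end{itemize}
If $\diam X<\infty$, the restrictions $R<\diam X$ are harmless because only small radii matter.

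A cleaner alternative, if measurability of $\zeta$ is troublesome, is to take $\zeta(E):=\min\{(\diam E)^{s-rq},\dots\}$ or simply $\zeta$ defined via $(\diam E)^{s-rq}$ and bound it above by $(2R)^{s-rq}$ in the estimate. Only an upper bound on $\zeta(B_{\e_n}(x))$ enters the left side of the theorem.

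\textbf{Step 2: bound the left-hand side of \eqref{eq:Besov constant is more that integral of lower limiting double integralgen} by the Besov norm.} With $\e_n'=2\e_n$ and $\zeta(B_{\e'_n}(x))\le (2\e_n')^{s-rq}$, we get
\[
\frac{1}{(\e_n')^{s}}\zeta(B_{\e_n'}(x))\fint_{B_{\e_n'}(x)}W(z,x)\,d\mu(z)\le 2^{s-rq}\,\frac{1}{(\e_n')^{rq}}\fint_{B_{\e_n'}(x)}\rho(u(x),u(z))^q\,d\mu(z).
\]
Integrating in $x$, the left-hand side is at most $2^{s-rq}\|u\|^q_{B^r_{q,\infty}}<\infty$ for every $n$, so its liminf is finite.

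\textbf{Step 3: conclude.} Part~2 of Theorem~\ref{thm:Besov constant controls double average integral} yields a $\psi$-$\sigma$-finite set $D$ outside which
\[
\liminf_n \fint_{B_{\e_n}(x)}\fint_{B_{\e_n}(x)}W=0,
\]
which is \eqref{eq:fine property of BV^q funcions}. The Carathéodory measure built from $\zeta=(\diam)^{s-rq}$ on Borel sets is exactly $\mathcal{H}^{s-rq}$ by the remark after Definition~\ref{def:Hausdorff measures}.

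The main obstacle I expect is Step~1: verifying the structural conditions on $\zeta$, in particular the lower bound $\diam B_R(x)\gtrsim R$ from Ahlfors regularity, and measurability. The rest is a direct substitution into the abstract theorem.
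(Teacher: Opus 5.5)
Your proposal is the paper's proof: apply parts 1--2 of Theorem~\ref{thm:Besov constant controls double average integral} with $\Omega=X$, $\eta_n=1$, $\zeta=(\diam)^{s-rq}$, $W(z,y)=\rho(u(z),u(y))^q$ and the sequence $2\e_n$, and bound the left-hand side by $2^{s-rq}\|u\|^q_{B^r_{q,\infty}(X,\mu;Y)}$. Your Step~1 checks hypotheses the paper leaves implicit; the Ahlfors bound $\kappa R\le\diam B_R(x)\le 2R$ is correct and is what makes this $\zeta$ admissible, but drop the claim that $r\mapsto\diam B_r(x)$ is right-continuous, which is false in general because diameters need not pass to decreasing intersections of closed balls, and use your fallback instead: replacing $\zeta(B_{\e_n}(y))$ inside $H_n$ by the constant $(\kappa\e_n)^{s-rq}$ leaves the proof of part~1 intact, with $1/\lambda'$ replaced by $(\kappa/2)^{s-rq}$.
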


\begin{proof}
We intend to use Theorem \ref{thm:Besov constant controls double average integral} with $\Omega=X$, $\eta_n=1$, $\zeta(A):=\left(\text{diam}(A)\right)^{s-rq}$, and $W(z,y):=\rho\left(u(z),u(y)\right)^q$. Since $u\in B^r_{q,\infty}(X,\mu;Y)$, then 
\begin{equation}
\liminf\limits_{n\to\infty}\int_X\Bigg\{\fint_{B_{(2\e_n)}(x)}\frac{\rho\left(u(z),u(x)\right)^q}{(2\e_n)^{rq}}d\mu(z)\Bigg\}d\mu(x)\leq \|u\|^q_{B^{r}_{q,\infty}(X,\mu;Y)}<\infty.
\end{equation} 
From the first and second parts of Theorem \ref{thm:Besov constant controls double average integral} we get for the sequence $2\e_n$ an $\mathcal{H}^{s-rq}$ $\sigma$-finite set $D\subset X$ such that for every $x\in X\setminus D$ we get \eqref{eq:fine property of BV^q funcions}.
\end{proof}

\section{The exceptional set of non-average Lebesgue points of $W^{r,q}$-functions has zero $(s-rq)$-dimensional Hausdorff measure}

The following lemma will help us determine the Borel measurability of a function that will appear in Theorem~\ref{thm:hausdorff-dim-D}.
\begin{lemma}
\label{lem:contability of sup of Borel functions}
Let $X$ be a metric space. Let $\varphi:\mathcal{P}(X)\to [0,\infty]$ be a set function such that whenever $A\subset B\subset X$, one has $\varphi(A)\leq \varphi(B)$. Let $h:(0,\infty)\to (0,\infty)$ be a continuous function. Fix $\sigma_0\in (0,\infty]$. Then
\begin{equation}
\sup_{0 < \sigma < \sigma_0}
\frac{\varphi\big(B_\sigma(x)\big)}{h(\sigma)}
=
\sup_{0 < \sigma < \sigma_0,\ \sigma\in\mathbb{Q}}
\frac{\varphi\big(B_\sigma(x)\big)}{h(\sigma)}.
\end{equation}
In particular, if $\varphi$ is a Borel measure and finite on bounded sets, then the function
\begin{equation}
Q(x):=\sup_{0 < \sigma < \sigma_0}
\frac{\varphi\big(B_\sigma(x)\big)}{h(\sigma)}
\end{equation}
is Borel measurable.
\end{lemma}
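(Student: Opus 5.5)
The plan is to prove the equality of the two suprema directly, and then deduce Borel measurability from it: a countable supremum of Borel functions is Borel.

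\emph{Equality of suprema.} The inequality ``$\geq$'' is trivial, since the right-hand side is a supremum over a smaller index set. For ``$\leq$'', fix $x$ and $\sigma\in(0,\sigma_0)$. Because the balls $B_\sigma(x)$ are closed, they are not right-continuous in an obvious way, but monotonicity suffices. Choose rationals $\sigma_k\downarrow\sigma$ with $\sigma_k<\sigma_0$; this is possible since $\sigma<\sigma_0$. Then $B_\sigma(x)\subset B_{\sigma_k}(x)$, so
\[
\frac{\varphi(B_\sigma(x))}{h(\sigma)}\leq \frac{\varphi(B_{\sigma_k}(x))}{h(\sigma_k)}\cdot\frac{h(\sigma_k)}{h(\sigma)}.
\]
By continuity and positivity of $h$, we have $h(\sigma_k)/h(\sigma)\to 1$. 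Hence
\[
\frac{\varphi(B_\sigma(x))}{h(\sigma)}\leq \liminf_{k}\frac{\varphi(B_{\sigma_k}(x))}{h(\sigma_k)},
\]
and this is at most the rational supremum. The case where the values are $+\infty$ is handled the same way. Taking the supremum over $\sigma$ gives ``$\leq$''.

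\emph{Measurability.} Now let $\varphi$ be a Borel measure that is finite on bounded sets. For each fixed $\sigma>0$, the set function $F(B):=\varphi(B)$ on closed balls is monotone and finite. It is also right-continuous in the sense of \eqref{eq:right-continuous property}, because
\[
\bigcap_{r>r_0}B_r(x)=B_{r_0}(x)
\]
for closed balls, and continuity from above holds for a measure on sets of finite measure. By Lemma~\ref{lem:uper semicontinuity of h}, the map $x\mapsto\varphi(B_\sigma(x))$ is upper semicontinuous, hence Borel. Dividing by the positive constant $h(\sigma)$ keeps it Borel. By the first part, $Q$ is the supremum of countably many Borel functions (indexed by $\sigma\in\mathbb{Q}\cap(0,\sigma_0)$), so $Q$ is Borel measurable, as an extended-real-valued function.

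\emph{Main point of care.} The only delicate step is the approximation from the right. Approximating from the left would fail, since closed balls are not continuous from below. Approximating from the right works because both monotonicity of $\varphi$ and continuity of $h$ point in the right direction.
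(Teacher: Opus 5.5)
Your proof is correct and takes the same approach as the paper: you approximate $\sigma$ from the right by rationals, use monotonicity of $\varphi$ together with continuity and positivity of $h$, and deduce Borel measurability by applying Lemma~\ref{lem:uper semicontinuity of h} to $x\mapsto\varphi(B_\sigma(x))$ for each rational $\sigma$. Your explicit check of the right-continuity hypothesis, via $\bigcap_{r>r_0}B_r(x)=B_{r_0}(x)$ and continuity from above on balls of finite measure, fills in a detail the paper leaves implicit.
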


\begin{proof}
Let $\sigma\in (0,\sigma_0)$. For every $n\in\mathbb{N}$, choose $\sigma_n\in (0,\sigma_0)\cap \mathbb{Q}$ such that $\sigma_n\downarrow \sigma$. Then, using monotonicity of $\varphi$ and that $\sigma_n>\sigma$, we obtain
\begin{equation}
\label{eq:estimate for phi divided by h}
\frac{\varphi\big(B_\sigma(x)\big)}{h(\sigma)}
\leq \frac{\varphi\big(B_{\sigma_n}(x)\big)}{h(\sigma_n)}\,
\frac{h(\sigma_n)}{h(\sigma)}
\leq \left(\sup_{0 < q < \sigma_0,\, q\in\mathbb{Q}}
\frac{\varphi\big(B_q(x)\big)}{h(q)}\right)
\frac{h(\sigma_n)}{h(\sigma)}.
\end{equation}
Since \eqref{eq:estimate for phi divided by h} holds for every $n\in\mathbb{N}$ and $h$ is continuous at $\sigma$, taking the limit as $n\to\infty$ yields
\begin{equation}
\label{eq:estimate for phi divided by h1}
\frac{\varphi\big(B_\sigma(x)\big)}{h(\sigma)}
\leq \sup_{0 < q < \sigma_0,\ q\in\mathbb{Q}}
\frac{\varphi\big(B_q(x)\big)}{h(q)}.
\end{equation}
Since $\sigma\in (0,\sigma_0)$ is arbitrary, we obtain
\begin{equation}
\sup_{0 < \sigma < \sigma_0}
\frac{\varphi\big(B_\sigma(x)\big)}{h(\sigma)}
\leq \sup_{0 < q < \sigma_0,\ q\in\mathbb{Q}}
\frac{\varphi\big(B_q(x)\big)}{h(q)}.
\end{equation}
The reverse inequality is immediate. Assume now that $\varphi$ is a Borel measure and finite on bounded sets. Then, using Lemma \ref{lem:uper semicontinuity of h}, we get for each fixed $\sigma>0$, that the function
$x\mapsto \varphi(B_\sigma(x))$ is upper semicontinuous, and hence Borel measurable. Thus, the function $Q$ is a countable supremum of Borel functions and is therefore itself Borel.
\end{proof}

We now restate and prove Theorem~\ref{thm:hausdorff-dim-D,intro}.
\begin{theorem}
\label{thm:hausdorff-dim-D}
Let $(X,d)$ be a metric space, and let $\mu$ be an Ahlfors $s$-regular measure on $X$ for some $s\in (0,\infty)$. Let $(Y,\rho)$ be a metric space. Let $0<r<1$, $0<q<\infty$ be such that $rq\leq s$, and let $u\in W^{r,q}_{\text{loc}}(X,\mu;Y)$. 
Define
\begin{equation}
\label{eq:def-D1}
D := \Set{x \in X}[
\limsup_{\varepsilon \to 0^+} \fint_{B_{\varepsilon}(x)} \fint_{B_{\varepsilon}(x)}
\rho\left(u(z),u(y)\right)^q\, d\mu(z)\, d\mu(y) > 0].
\end{equation}
Then $\mathcal{H}^{s - rq}(D)=0$. In other words, $\mathcal{H}^{s - rq}$–almost every point is an average Lebesgue point in the sense of Definition~\ref{def:lower average Lebesgue point}.
\end{theorem}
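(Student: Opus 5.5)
The idea is to apply Theorem~\ref{thm:Besov constant controls double average integral} on balls. We take $\zeta(A):=(\diam A)^{s-rq}$ and $W(z,y):=\rho(u(z),u(y))^q$. For the scales we use a dyadic sequence $\e_n:=2^{-n}$ with $\eta_n\equiv 1$. The fractional Sobolev norm gives summability of the dyadic Besov-type quantities, and part~3 of that theorem then gives $\psi(A)=0$ for the limsup-exceptional set along the sequence $\e_n/2=2^{-n-1}$.

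\textbf{Localization.} Since $u\in W^{r,q}_{\mathrm{loc}}$, we may cover $X$ by countably many open balls $\Omega_k$, whenever $X$ is separable. Ahlfors regularity makes $X$ separable: bounded sets are totally bounded via disjoint balls and the measure bound. On each $\Omega_k$ the function $u$ belongs to $W^{r,q}(\widetilde\Omega_k)$ for a slightly larger ball $\widetilde\Omega_k$. We work on $\Omega=\Omega_k$; countable unions of $\mathcal H^{s-rq}$-null sets are null. We must also check the hypotheses \eqref{eq:property of zeta1}, \eqref{eq:property of zeta}, \eqref{eq:property of zeta3}, \eqref{eq:property of zeta5} for $\zeta(B_R(x))=\diam(B_R(x))^{s-rq}$. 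This is the delicate point: the diameter of a ball may be much smaller than $R$ in a general metric space. Ahlfors regularity gives $\diam B_R(x)\simeq R$ for $R<\diam X$ up to constants, since a ball of small diameter $d$ is contained in $B_d(x)$, so $cR^s\le \mu(B_R)\le Cd^s$. Alternatively, and more safely, we could replace $\zeta$ by the comparable gauge $\zeta(B)=R^{s-rq}$ on balls, whose Carathéodory measure is comparable to $\mathcal H^{s-rq}$. Measurability in \eqref{eq:property of zeta5} follows because $x\mapsto \zeta(B_R(x))$ is comparable to a constant; if needed we use the $R^{s-rq}$ version, for which it is constant. The case $rq=s$ is handled the same way with $\zeta\equiv 1$ on nonempty sets.

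\textbf{Summability (main step).} We need
\[
\sum_n \int_\Omega \e_n^{-rq}\fint_{B_{\e_n}(x)}\chi_\Omega(z)\,\rho(u(z),u(x))^q\,d\mu(z)\,d\mu(x)<\infty,
\]
since $\e_n^{-s}\zeta(B_{\e_n}(x))\simeq \e_n^{-rq}$. Split the ball $B_{2^{-n}}(x)$ into the annuli $2^{-m-1}<d(x,z)\le 2^{-m}$ for $m\ge n$. On the annulus of index $m$ we have $2^{nrq}2^{ns}\lesssim 2^{-(m-n)(rq+s)}\,d(x,z)^{-(rq+s)}$. Summing over $n\le m$ gives a geometric series, bounded by a constant times $d(x,z)^{-(rq+s)}$ on each annulus. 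Hence the total is at most $C\iint_{\widetilde\Omega\times\widetilde\Omega}\rho(u(x),u(z))^q d(x,z)^{-rq-s}<\infty$. The diagonal $z=x$ has measure zero, since atoms are excluded by Ahlfors regularity. We invoke part~3 to get $\psi(A)=0$, where $A$ is the set on which the limsup of the double averages over $B_{2^{-n-1}}(x)$ is positive.

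\textbf{From dyadic to continuous radii.} For $2^{-n-2}<\e\le 2^{-n-1}$ we have $B_\e(x)\subset B_{2^{-n-1}}(x)$. Ahlfors regularity gives $\mu(B_{2^{-n-1}}(x))\le C'\mu(B_\e(x))$, so the double average over $B_\e(x)$ is at most $C'^2$ times the one over $B_{2^{-n-1}}(x)$. Thus $D\cap\Omega\subset A$, and $\mathcal H^{s-rq}(D)=0$ because $\psi$ and $\mathcal H^{s-rq}$ are comparable (or equal) null-set-wise. The main obstacle is the summability estimate together with verifying the structural hypotheses on $\zeta$; for the latter, the plan is to use the radius-based gauge.
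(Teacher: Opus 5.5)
Your proof is correct, but it takes a different route from the paper's proof of this theorem.

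\textbf{Your route.} You obtain the result as a corollary of part~3 of Theorem~\ref{thm:Besov constant controls double average integral}. That part is a Borel--Cantelli mechanism (Lemma~\ref{lem:the upper density lemma for a sequence of Radon measures:limsupversion}). You feed it with the dyadic-annuli bound $\sum_n 2^{n(s+rq)}\iint_{\{d(x,z)\le 2^{-n}\}\cap(\Omega\times\Omega)}W\lesssim\iint_{\Omega\times\Omega}W\,d(x,z)^{-s-rq}$, and then compare dyadic and continuous radii. This is exactly the scheme the paper uses later, in Lemma~\ref{lem:varphi-Hasdorff measure of D}, for the logarithmic measure.

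\textbf{The paper's route.} It never sums anything. It introduces the near-diagonal Gagliardo measures $\mu_n(B)=\int_B\int_{B_{1/n}(y)}V(z,y)\,d(z,y)^{-s-rq}\,d\mu(z)\,d\mu(y)$ and notes $\mu_n(B_0)\to0$ on balls by dominated convergence. It then applies the $\liminf$ density lemma with a function (Lemma~\ref{lem:the upper density lemma for a sequence of Radon measures1}, with $\zeta=\diam^\alpha$) to get $g=0$ $\mathcal H^\alpha$-a.e. Finally, the nesting $B_\sigma(y)\subset B_{1/n}(y)$ for $\sigma<1/n$ turns that $\liminf$ into a genuine $\limsup_{\sigma\to0}$ over all radii at once.

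\textbf{What each buys.} The paper's route needs only that the Gagliardo mass near the diagonal vanishes, with no rate. It avoids any dyadic discretisation. It also uses only conditions \eqref{eq:property of zeta1}--\eqref{eq:property of zeta} on $\zeta$, with measurability of $g$ supplied by Lemma~\ref{lem:contability of sup of Borel functions}. Your route reuses the general theorem verbatim and makes transparent why $W^{r,q}$ upgrades the $\liminf$ of Theorem~\ref{cor:sigma finiteness of limiting average with respect to Hausdorff measure} to a limit: the dyadic Besov quantities are summable, not merely bounded. It also adapts at once to weights. For example, $\eta_n=2^{n(\alpha-s+rq)}$ with $\zeta=\diam^\alpha$ recovers the paper's $\mathcal H^\alpha$ statements for every $\alpha\ge0$.

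\textbf{One hypothesis check is not justified as written.} A function comparable to a constant need not be measurable, so \eqref{eq:property of zeta5} for $x\mapsto(\diam B_R(x))^{s-rq}$ does not follow that way. Your fallback ``$\zeta(B)=R^{s-rq}$ on balls'' is not a well-defined set function on Borel sets, since a ball does not determine its radius. The clean fix is to run the proof of part~3 with
$H_n(y):=\e_n^{-s}(2\e_n)^{s-rq}\fint_{B_{\e_n}(y)}\chi_\Omega(z)\,W(z,y)\,d\mu(z)$, which is measurable. Since $\zeta(B_{\e_n}(x))\le(2\e_n)^{s-rq}$, the lower bound \eqref{eq:estimate from below for a density termgen} then holds with $\lambda'=1$, so \eqref{eq:property of zeta3} is not even needed. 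This $H_n$ is exactly the quantity you sum anyway.

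Two smaller remarks. Your lower bound $\diam B_R(x)\gtrsim R$ for \eqref{eq:property of zeta} is correct. Separability is unnecessary, because $X=\bigcup_k B_k(x_0)$ already gives a countable cover by balls.
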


\begin{proof}
The idea of the proof is as follows. We apply Lemma~\ref{lem:the upper density lemma for a sequence of Radon measures1} together with the assumption $u\in W^{r,q}_{\mathrm{loc}}(X,\mu;Y)$ to construct a function $g$ such that $g(x)=0$ for $\mathcal{H}^{s-rq}$–almost every $x$. We then show that the upper limit in $D$ is bounded above by $g$ at $\mathcal{H}^{s-rq}$–almost every $x\in X$.

Set $V(z,y):=\rho\left(u(z),u(y)\right)^q$. Define a Borel measure $\mu_n$ on $X$ as follows. If $B\subset X$ is a Borel set, set
\begin{equation}
\label{eq:F(B) is positivehtyjjj}
\tilde{\mu}_n(B)
:=
\int_{B} \int_{B_{1/n}(y)}
\frac{V(z,y)}{d(z,y)^{s+rq}}\, d\mu(z)\, d\mu(y).
\end{equation}
For an arbitrary set $A\subset X$, define
\[
\mu_n(A)
:=
\inf\Set{\tilde{\mu}_n(B)}[B \text{ is Borel},\ A\subset B].
\]
Note that $\tilde{\mu}_n(B)=\mu_n(B)$ for Borel sets $B$. Observe that, since $\mu$ is Ahlfors regular, every singleton $\{y_0\}$ has measure zero. Thus, since  $u\in W^{r,q}_{\text{loc}}(X,\mu;Y)$, for every ball $B\subset X$ we have by dominated convergence theorem
\begin{equation}
\label{eq:limit of mu n in ball is zero}
\lim\limits_{n\to\infty}\mu_n(B)=\int_{B}\left(\lim\limits_{n\to\infty} \int_{B_{\frac{1}{n}}(y)}
\frac{V(z,y)}{d(z,y)^{s+rq}}d\mu(z)\right)d\mu(y)=\int_{B}\int_{\{y\}}
\frac{V(z,y)}{d(z,y)^{s+rq}}d\mu(z)d\mu(y)=0.
\end{equation}
Next, given $\alpha\geq 0$, and an arbitrary positive sequence $r_n\downarrow 0$, define
\begin{equation}
\label{eq:bound from below on the generalized upper density(1)hj}
g(x):=\liminf_{n \to \infty} \left( \sup_{0 < \sigma < r_n}
\frac{\mu_n\big(B_\sigma(x)\big)}{2^\alpha\sigma^\alpha}  \right). 
\end{equation}
Note that the limit in~\eqref{eq:bound from below on the generalized upper density(1)hj} as $n\to\infty$ exists; however, we do not use this fact.
Then, obviously,
\begin{equation}
\label{eq:bound from below on the generalized upper density(1)hjjjkj}
\liminf_{n \to \infty} \left( \sup_{0 < \sigma < r_n}
\frac{\mu_n\big(B_\sigma(x)\big)}{2^\alpha\sigma^\alpha}\right)\geq g(x)\quad \forall x\in X.
\end{equation}
By Lemma~\ref{lem:contability of sup of Borel functions}, the function $g$ is Borel measurable, and therefore $\mathcal{H}^\alpha$-measurable. Thus, using \eqref{eq:limit of mu n in ball is zero}, and applying Lemma \ref{lem:the upper density lemma for a sequence of Radon measures1} with $\zeta(A)=(\text{diam}(A))^\alpha$, we obtain for every open ball $B_0\subset X$
\begin{equation}
\label{eq:applying generalized density theorem}
0=\lambda \left( \liminf_{n \to \infty} \mu_n(B_0)\right) \geq \int_{B_0}g(x) \, d\mathcal{H}^\alpha(x).
\end{equation}
Thus, by \er{eq:F(B) is positivehtyjjj} and \eqref{eq:applying generalized density theorem}, we deduce
that for every $\alpha\geq 0$, for every positive sequence $r_n\downarrow 0$, and for every open ball $B_0\subset X$ we have
\begin{equation}
\label{eq:bound from below on the generalized upper density(1)hjjkj}
g(x)=0\quad\text{for $\mathcal{H}^\alpha$-almost every }x\in B_0.
\end{equation}
In particular, for the choice $r_n=1/n$, for every $\alpha\geq 0$, for every positive sequence $r_n\downarrow 0$, and for every open ball $B_0\subset X$, we obtain from~\eqref{eq:bound from below on the generalized upper density(1)hjjkj} that, for $\mathcal{H}^\alpha$–almost every $x\in B_0$,
\begin{multline}
\label{eq:bound from below on the generalized upper density(1)hjjkjjkfb}
\limsup_{\sigma\to 0^+} \left(
\frac{1}{\sigma^{s+rq+\alpha}}\int_{B_\sigma(x)} \int_{B_{\sigma}(y)}
V(z,y)\, d\mu(z)\, d\mu(y)  \right)
\\
\leq\limsup_{\sigma\to 0^+} \left(
\frac{1}{\sigma^\alpha}\int_{B_\sigma(x)} \int_{B_{\sigma}(y)}
\frac{V(z,y)}{d(z,y)^{s+rq}}\, d\mu(z)\, d\mu(y)  \right)\quad \Big[\text{since $d(z,y)\leq \sigma$ for $z\in B_\sigma(y)$}\Big]
\\
=\lim_{n \to \infty} \left( \sup_{0 < \sigma < 1/n}
\frac{1}{\sigma^\alpha}\int_{B_\sigma(x)} \int_{B_{\sigma}(y)}
\frac{V(z,y)}{d(z,y)^{s+rq}}\, d\mu(z)\, d\mu(y)  \right)
\\
\leq\lim_{n \to \infty} \left( \sup_{0 < \sigma < 1/n}
\frac{1}{\sigma^\alpha}\int_{B_\sigma(x)} \int_{B_{1/n}(y)}
\frac{V(z,y)}{d(z,y)^{s+rq}}\, d\mu(z)\, d\mu(y)  \right)=2^\alpha g(x)=0.
\end{multline}
Therefore, by \eqref{eq:bound from below on the generalized upper density(1)hjjkjjkfb}, we obtain
\begin{multline}
\label{eq:bound from below on the generalized upper density(1)hjjkjjkfbty}
\limsup_{\sigma\to 0^+} \left(
\frac{1}{\sigma^{s+rq+\alpha}}\int_{B_{\sigma}(x)} \int_{B_{\sigma}(x)}
V(z,y)\, d\mu(z)\, d\mu(y)  \right)
\\
\leq\limsup_{\sigma\to 0^+} \left(
\frac{1}{\sigma^{s+rq+\alpha}}\int_{B_{\sigma}(x)} \int_{B_{2\sigma}(y)}
V(z,y)\, d\mu(z)\, d\mu(y)  \right)
\\
\leq \limsup_{\sigma\to 0^+} \left(
\frac{1}{\sigma^{s+rq+\alpha}}\int_{B_{2\sigma}(x)} \int_{B_{2\sigma}(y)}
V(z,y)\, d\mu(z)\, d\mu(y)  \right)=0\quad\text{for $\mathcal{H}^\alpha$-almost every }x\in B_0.
\end{multline}
Then, since $B_0\subset X$ is an arbitrary open ball, for every $\alpha\geq 0$ we deduce
\begin{equation}
\label{eq:bound from below on the generalized upper density(1)hjjkjjkfbtyyh}
\limsup_{\sigma\to 0^+} \left(
\frac{1}{\sigma^{s+rq+\alpha}}\int_{B_{\sigma}(x)} \int_{B_{\sigma}(x)}
V(z,y)\, d\mu(z)\, d\mu(y)  \right)=0\quad\text{for $\mathcal{H}^\alpha$-almost every }x\in X.
\end{equation}
In particular, the result follows by taking $\alpha=(s-rq)$ in \eqref{eq:bound from below on the generalized upper density(1)hjjkjjkfbtyyh} and using that $\mu\left(B_\sigma(x)\right)\ge c\sigma^s$.
\end{proof}

\section{The logarithmic Hausdorff measure}
In this section, we introduce an auxiliary measure, which we call the logarithmic Hausdorff measure. We will use it to prove Lemma~\ref{lem:varphi-Hasdorff measure of D}, which is the main ingredient in establishing an upper bound on the Hausdorff dimension of non-Lebesgue points of Besov functions, as stated in Theorem~\ref{thm:varphi-Hasdorff measure of D}.

The following definition is a special case of Carathéodory’s construction. It gives rise to a measure that we call the \emph{logarithmic Hausdorff measure}, which we denote by the Greek letter $\Lambda$. More precisely,
\begin{definition}[Definition of the logarithmic Hausdorff measure]
\label{def:ln-Hausdorff measures}
Let $X$ be a metric space and $s\in [0,\infty)$, $\delta\in (0,\infty]$, $\beta\in (0,\infty)$. The $s$-dimensional approximating logarithmic Hausdorff measure of size $\delta$ and decreasing factor $\beta$ denoted as $\Lambda^{s,\beta}_{\delta}$ is defined for every $E\subset X$ by
\begin{equation}
\label{eq:def of ln-Haus. mea.}
\Lambda^{s,\beta}_{\delta}(E):= \inf\Set{\sum_{i=1}^\infty g_{s,\beta}(\diam E_i)}[E \subset \bigcup_{i=1}^\infty E_i,\, \diam(E_i) \leq \delta ],
\end{equation}
where 
\begin{equation}\label{ggg}
g_{s,\beta}:[0,\infty]\to [0,\infty],\quad g_{s,\beta}(r):=
\begin{cases}
\frac{r^s}{|\ln r|^{\beta}}\quad & if\quad  r\in (0,\infty)\setminus\{1\}
\\
0\quad & if\quad  r=0
\\
\infty \quad & if\quad  r\in \{1,\infty\}
\end{cases}.
\end{equation}
The $s$-dimensional logarithmic Hausdorff measure with decreasing factor $\beta$ is defined by
\[
\Lambda^{s,\beta}(E)=\lim_{\delta\to 0^+}\Lambda^{s,\beta}_{\delta}(E).
\]
\end{definition}
We refer to $\beta$ as a \emph{decreasing factor}, since, for every small enough $\delta$, the larger $\beta$ is, the smaller the function $g_{s,\beta}$ becomes, and hence the smaller the measure $\Lambda^{s,\beta}_{\delta}$.

\begin{proposition}
\label{prop:properties of the ln-measure}
Let $X$ be a metric space, $s\in [0,\infty)$, and $\beta\in(0,\infty)$. Let $\Lambda^{s,\beta}$ be the logarithmic measure on $X$. Then the measure $\Lambda^{s,\beta}$ is obtained via Carathéodory's construction from $\zeta(A):=g_{s,\beta}\left(\diam A\right)$, $A\subset X$, where $g_{s,\beta}$ is defined in~\eqref{ggg}. In addition, the function $\zeta$ satisfies the four conditions
\eqref{eq:property of zeta1}, \eqref{eq:property of zeta}, \eqref{eq:property of zeta3}, and \eqref{eq:property of zeta5}.
\end{proposition}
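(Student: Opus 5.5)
The plan is to check the Carathéodory representation first and then verify the four conditions \eqref{eq:property of zeta1}, \eqref{eq:property of zeta}, \eqref{eq:property of zeta3}, \eqref{eq:property of zeta5} one at a time. All four reduce to two ingredients: an elementary analysis of the one-variable function $g_{s,\beta}$ near $0$, and a two-sided comparison of $\diam B_R(x)$ with $R$. I expect the measurability condition \eqref{eq:property of zeta5} to be the main obstacle.

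\textbf{Carathéodory representation.} This is immediate. With $\mathcal{F}$ the Borel sets and $\zeta(A):=g_{s,\beta}(\diam A)$, the approximating measures $\phi_\delta$ coincide with $\Lambda^{s,\beta}_\delta$. The only point to note is that in \eqref{eq:def of ln-Haus. mea.} one may replace each $E_i$ by its closure, which is Borel and has the same diameter, exactly as remarked after Definition~\ref{def:Hausdorff measures}. Hence $\Lambda^{s,\beta}=\psi$, and $\Lambda^{s,\beta}$ is Borel regular.

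\textbf{The function $g_{s,\beta}$.} I will establish the following facts.
\begin{enumerate}
\item On $(0,r_0)$, with $r_0:=\min\{e^{-\beta/s},1/4\}$ (for $s>0$), $g_{s,\beta}$ is positive, finite and increasing. Indeed, its logarithmic derivative is $s/r+\beta/(r|\ln r|)$, which is positive.
\item For every fixed $a\ge 1$, $\sup_{0<r<r_0/a} g_{s,\beta}(ar)/g_{s,\beta}(r)<\infty$. This holds because
\[
\frac{g_{s,\beta}(ar)}{g_{s,\beta}(r)}=a^s\Bigl(\frac{|\ln r|}{|\ln r|-\ln a}\Bigr)^{\beta}\longrightarrow a^s \quad\text{as } r\to 0^+.
\]
\end{enumerate}
For $s=0$ the function is again increasing near $0$ and the ratio bound is identical.

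\textbf{Diameter of balls.} The metric space is general, but these conditions are only ever used together with an Ahlfors $s$-regular measure $\mu$, and I will use that hypothesis here. Diameters of small balls are comparable to their radii: there is $\kappa>0$ with
\[
\kappa R\le \diam B_R(x)\le 2R \qquad\text{for } 0<R<\diam X .
\]
To see the lower bound, suppose every point of $B_R(x)$ lay within $\epsilon R$ of $x$. Then $B_R(x)=B_{\epsilon R}(x)$, so $cR^s\le \mu(B_R(x))=\mu(B_{\epsilon R}(x))\le C\epsilon^sR^s$. This is impossible once $\epsilon<(c/C)^{1/s}$.

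\textbf{Conditions \eqref{eq:property of zeta1}, \eqref{eq:property of zeta}, \eqref{eq:property of zeta3}.} Given the two ingredients, these follow quickly.
\begin{itemize}
\item For \eqref{eq:property of zeta1}, take $I$ small, for instance $2I<r_0$. Then $0<\diam B_R(x)<r_0<1$, so $0<\zeta(B_R(x))<\infty$.
\item For \eqref{eq:property of zeta}, by monotonicity $\zeta(B_{5R}(x))\le g_{s,\beta}(10R)$ and $\zeta(B_R(x))\ge g_{s,\beta}(\kappa R)$. Apply the ratio bound with $a=10/\kappa$ for $R<\gamma$ small.
\item For \eqref{eq:property of zeta3}, let $y\in B_{R/2}(x)$. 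Then $\diam B_R(y)\ge\kappa R$ while $\diam B_R(x)\le 2R$, so the same ratio bound with $a=2/\kappa$ gives $\lambda'$.
\end{itemize}

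\textbf{Condition \eqref{eq:property of zeta5}, the main obstacle.} Here I need $x\mapsto g_{s,\beta}(\diam B_R(x))$ to be $\mu$-measurable for every $R$. Since $g_{s,\beta}$ is Borel on $[0,\infty]$, it suffices that $d_R(x):=\diam B_R(x)$ is Borel.

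The natural first attempt is Lemma~\ref{lem:uper semicontinuity of h} with $F(B)=\diam B$. Monotonicity holds, but right-continuity \eqref{eq:right-continuous property} can fail: $\lim_{r\downarrow R}\diam B_r(x)$ may exceed $\diam B_R(x)$. I will therefore argue more directly. Let $U_r(x)$ denote the open ball.
\begin{itemize}
\item The map $x\mapsto \diam U_r(x)$ is lower semicontinuous, since the condition ``some pair in $U_r(x)$ is at distance $>t$'' is open in $x$.
\item The map $x\mapsto \lim_{r\downarrow R}\diam B_r(x)=:d_R^+(x)$ is upper semicontinuous, by the argument of Lemma~\ref{lem:uper semicontinuity of h}.
\end{itemize}
Moreover $\diam U_R(x)\le d_R(x)\le d_R^+(x)$, and for each fixed $x$ the monotone function $r\mapsto d_r(x)$ has only countably many jumps. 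I would try to write $d_R$ as a countable combination of such semicontinuous functions, for example via $\diam B_R(x)=\lim_{\epsilon\downarrow 0}\sup\{d(y,z): y,z\in U_{R+\epsilon}(x),\ \max(d(x,y),d(x,z))\le R\}$, and extract Borel measurability from that.

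If this fails in full generality, the fallback is as follows. $\mu$ is complete and Borel, and it suffices to show $\{d_R>t\}$ is $\mu$-measurable. This set lies between the open set $\{\diam U_R>t\}$ and the closed set $\{d_R^+\ge t\}$. I would try to show that the difference of these two sets is $\mu$-null, using the Ahlfors bounds together with the countability of the jump radii.
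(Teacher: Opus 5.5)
Your handling of the Carath\'eodory representation and of conditions \eqref{eq:property of zeta1}, \eqref{eq:property of zeta}, \eqref{eq:property of zeta3} is correct, and more careful than the paper's. The paper simply writes $\zeta(B_\sigma(x))=g_{s,\beta}(2\sigma)$, i.e.\ it takes $\diam B_\sigma(x)=2\sigma$. With that identity, \eqref{eq:property of zeta} is the single computation $g_{s,\beta}(10\sigma)/g_{s,\beta}(2\sigma)\le 5^s2^\beta$, and \eqref{eq:property of zeta3}, \eqref{eq:property of zeta5} are dismissed because $x\mapsto\zeta(B_\sigma(x))$ is constant. You are right that this identity is not available in a general metric space; without some extra hypothesis, \eqref{eq:property of zeta1} even fails at an isolated point. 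Your bound $\kappa R\le\diam B_R(x)\le 2R$ from Ahlfors regularity, combined with the monotonicity of $g_{s,\beta}$ on $(0,1)$ and the ratio bound, is a valid substitute for the first three conditions.

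The gap is \eqref{eq:property of zeta5}. Once $\zeta(B_R(x))$ genuinely depends on $x$, you must prove that $d_R(x)=\diam B_R(x)$ is $\mu$-measurable, and neither route you outline does this.

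Your proposed formula is a tautology. The constraint $\max(d(x,y),d(x,z))\le R$ already forces $y,z\in B_R(x)\subset U_{R+\epsilon}(x)$, so the expression equals $\diam B_R(x)$ for every $\epsilon$ and yields no semicontinuity.

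The fallback also fails as stated, because $\{d_R^+\ge t\}\setminus\{\diam U_R>t\}$ need not be $\mu$-null. Countability of jump radii for each fixed $x$ says nothing about a \emph{fixed} $R$ being a jump radius for many $x$. For example, take $X=\{0,2\}^{\mathbb N}$ with $d(x,y)=3^{-k}$ ($k$ the first index where $x,y$ differ) and the uniform product measure, which is Ahlfors regular of dimension $\log 2/\log 3$. At $R=3^{-k}$ every point has $\diam U_R(x)=3^{-k-1}<3^{-k}=d_R(x)=d_R^+(x)$. So for $t\in[3^{-k-1},3^{-k})$ that difference is all of $X$.

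To close the gap you need an actual argument. Here are two options.

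\emph{Proper spaces.} Suppose $X$ is proper; this holds, for instance, if $X$ is complete, since Ahlfors regular spaces are doubling. Then compactness shows that $\{x:\exists\,y,z\in B_R(x),\ d(y,z)\ge t'\}$ is closed. Hence $\{d_R>t\}=\bigcup_n\{x:\exists\,y,z\in B_R(x),\ d(y,z)\ge t+1/n\}$ is $F_\sigma$, and $d_R$ is Borel.

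\emph{Bypassing the condition.} Downstream, $\zeta(B_\rho(x))$ only enters through the density quotients and $H_n$. There it can be replaced by the constant majorant $g_{s,\beta}(2\rho)\ge\zeta(B_\rho(x))$, which is exactly the quantity the paper computes with. Then \eqref{eq:property of zeta3} holds with $\lambda'=1$ and \eqref{eq:property of zeta5} is trivial. However, this changes the statement rather than proving it.
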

\begin{proof}
Notice that, by definition, $\Lambda^{s,\beta}$ is indeed obtained from Carathéodory's construction associated with $\zeta$, when $\zeta$ is defined on all subsets of $X$. For condition~\eqref{eq:property of zeta1}, note that for every $0<\sigma<1/2$,
\begin{equation}
\zeta\big(B_{\sigma}(x)\big)=g_{s,\beta}(2\sigma)=\frac{(2\sigma)^{s}}{|\ln(2\sigma)|^{\beta}}
\quad \Longrightarrow\quad
0<\zeta\big(B_{\sigma}(x)\big)<\infty.
\end{equation}
We now verify condition~\eqref{eq:property of zeta}. There exists a sufficiently small $0<\gamma<1/10$ such that for every $0<\sigma<\gamma$ we have
\begin{multline}
\frac{\zeta\big(B_{5\sigma}(x)\big)}{\zeta\big(B_{\sigma}(x)\big)}
=g_{s,\beta}(10\sigma)\frac{1}{g_{s,\beta}(2\sigma)}
=
\frac{(10\sigma)^{s}}{|\ln(10\sigma)|^{\beta}}
\frac{|\ln (2\sigma)|^{\beta}}{(2\sigma)^{s}}
\\
=\frac{(10\sigma)^{s}}{|\ln\sigma +\ln 10|^{\beta}}
\frac{|\ln \sigma+\ln 2|^{\beta}}{(2\sigma)^{s}}
=
5^{s}
\left(\frac{\left|-1+\frac{\ln 2}{|\ln \sigma|}\right|}{\left|-1+\frac{\ln 10}{|\ln \sigma|}\right|}\right)^\beta
\leq 5^{s}2^\beta.
\end{multline}
Therefore, condition~\eqref{eq:property of zeta} holds with $\lambda=5^{s}2^\beta$. 
Condition~\eqref{eq:property of zeta3} follows since the value of $\zeta\big(B_{\sigma}(x)\big)$ does not depend on the center point $x$, and hence
\begin{equation}
\zeta\big(B_{\sigma}(x)\big)=g_{s,\beta}(2\sigma)=\zeta\big(B_{\sigma}(y)\big) \quad \forall y \in X.
\end{equation}
Condition~\eqref{eq:property of zeta5} holds trivially because the function $x\mapsto \zeta\big(B_{\sigma}(x)\big)$ is constant, and, in particular, a Borel function.
\end{proof}

\begin{proposition}
Let $X$ be a metric space, $s\in [0,\infty)$, and $\beta\in (0,\infty)$. For every $0<\delta\le e^{-1}$ and every $A\subset X$,
\begin{equation}
\label{eq:delta logarithmic measure in A is controlled by delta Hausdorff measure}
\Lambda^{s,\beta}_{\delta}(A)\le \mathcal{H}_{\delta}^{s}(A).
\end{equation}
In particular,
\begin{equation}
\label{eq:logarithmic measure in A is controlled by Hausdorff measure}
\Lambda^{s,\beta}(A)\le \mathcal{H}^{s}(A).
\end{equation}
\end{proposition}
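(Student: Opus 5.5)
The plan is to compare the two approximating measures cover by cover. For $0<r\le\delta\le e^{-1}$ we show the pointwise bound $g_{s,\beta}(r)\le r^s$. Any admissible cover for $\mathcal{H}^s_\delta(A)$ is then also admissible for $\Lambda^{s,\beta}_\delta(A)$, and its $\Lambda$-sum is no larger than its $\mathcal{H}$-sum. Taking the infimum over covers then yields \eqref{eq:delta logarithmic measure in A is controlled by delta Hausdorff measure}.

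First, the pointwise estimate. Fix a countable cover $\{E_i\}$ of $A$ with $\diam E_i\le\delta\le e^{-1}$, and set $r_i:=\diam E_i\in[0,e^{-1}]$.

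If $r_i=0$, then by \eqref{ggg} we have $g_{s,\beta}(0)=0\le (\diam E_i)^s$. Note that $0^s$ equals $0$ for $s>0$ and $1$ for $s=0$, so the inequality holds in either convention.

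If $0<r_i\le e^{-1}$, then $r_i\neq 1$ and $\ln r_i\le -1$. Hence $|\ln r_i|\ge 1$, and since $\beta>0$ we get $|\ln r_i|^\beta\ge 1$. It follows that
\[
g_{s,\beta}(r_i)=\frac{r_i^s}{|\ln r_i|^\beta}\le r_i^s .
\]
Summing over $i$ gives
\[
\Lambda^{s,\beta}_\delta(A)\le\sum_i g_{s,\beta}(\diam E_i)\le\sum_i(\diam E_i)^s .
\]
Taking the infimum over all such covers gives \eqref{eq:delta logarithmic measure in A is controlled by delta Hausdorff measure}. This also covers the case where no cover has a finite sum, since then the right-hand side is $+\infty$.

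Second, we pass to the limit. Both $\Lambda^{s,\beta}_\delta$ and $\mathcal{H}^s_\delta$ increase as $\delta\downarrow 0$, and the limits define $\Lambda^{s,\beta}$ and $\mathcal{H}^s$ respectively (Definitions \ref{def:ln-Hausdorff measures} and \ref{def:Hausdorff measures}). The inequality holds for all $\delta\in(0,e^{-1}]$, so letting $\delta\to 0^+$ gives \eqref{eq:logarithmic measure in A is controlled by Hausdorff measure}.

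There is no real obstacle. The only points needing care are that the threshold $\delta\le e^{-1}$ keeps every diameter away from the singular value $r=1$ and guarantees $|\ln r|\ge 1$, and that degenerate sets with $\diam E_i=0$ are handled by the convention $g_{s,\beta}(0)=0$.
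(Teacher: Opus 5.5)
Your proof is correct and follows essentially the same route as the paper. Both rest on the pointwise bound $g_{s,\beta}(r)\le r^s$ for $0\le r\le\delta\le e^{-1}$, which comes from $|\ln r|\ge 1$; this bound is summed over an arbitrary $\delta$-cover, the infimum is taken, and then $\delta\to 0^+$. Your explicit treatment of the degenerate case $\diam E_i=0$, including the $0^0$ convention when $s=0$, is a detail the paper only mentions in passing.
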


\begin{proof}
For every $\beta>0$ and every $0<\delta\le e^{-1}$, we have $|\ln(\diam(E))|\ge 1$ for every set $E$ satisfying
$0<\diam(E)\le \delta$. Therefore,
\[
(\diam(E))^{s}|\ln(\diam(E))|^{-\beta}\le (\diam(E))^{s}.
\]
Thus, $g_{s,\beta}(\diam(E))\le (\diam(E))^{s}$ for every set $E$ with
$\diam(E)\le \delta$ (including the case $\diam(E)=0$). Using this inequality, for every $\delta$–cover $\{E_i\}$ of a set $A$, we obtain
\[
\sum_i(\diam(E_i))^{s}|\ln(\diam(E_i))|^{-\beta}
\le 
\sum_i(\diam(E_i))^{s}.
\]
Taking the infimum over all such covers yields \eqref{eq:delta logarithmic measure in A is controlled by delta Hausdorff measure}. Passing to the limit as $\delta\to 0^+$ in \eqref{eq:delta logarithmic measure in A is controlled by delta Hausdorff measure}, we obtain \eqref{eq:logarithmic measure in A is controlled by Hausdorff measure}.
\end{proof}

\begin{definition}[Hausdorff dimension and logarithmic Hausdorff dimension]
\label{def:Hausdorff dimension and logarithmic Hausdorff dimension}
Let $X$ be a metric space.  
For a set $E \subset X$ and $\beta\in (0,\infty)$, define the Hausdorff dimension of $E$ and the logarithmic Hausdorff dimension of $E$, respectively, by 
\begin{equation}
\label{eq:def-Log-dim}
\begin{cases}
\mathcal{H}_{\dim} E 
:= \inf\Set{s\ge 0}[\mathcal H^s(E)=0]
= \sup\Set{s\ge 0}[\mathcal H^s(E)=\infty],
\\
\Lambda_{\dim} E 
:= \inf\Set{s\ge 0}[\Lambda^{s,\beta} (E)=0]
= \sup\Set{s\ge 0}[\Lambda^{s,\beta}(E)=\infty].
\end{cases}
\end{equation}
\end{definition}

Note that we omit $\beta$ in the notation $\Lambda_{\dim} E$. The justification is given in the following proposition, which states that the logarithmic Hausdorff measure and the usual Hausdorff measure determine the same dimension:
\begin{proposition}
\label{prop:equality between Hausdorff-D}
Let $X$ be a metric space, let $\beta\in (0,\infty)$, $s\in [0,\infty)$, and let $E\subset X$.  
Then the Hausdorff dimension defined via $\mathcal{H}^s$ coincides with the dimension defined via the logarithmic Hausdorff measure $\Lambda^{s,\beta}$:
\begin{equation}
\label{eq:equality between dimensions}
\mathcal{H}_{\dim} E = \Lambda_{\dim} E.
\end{equation}
In particular, the logarithmic dimension does not depend on $\beta$.
\end{proposition}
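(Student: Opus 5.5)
The plan is to compare the two measures in both directions, with a small loss in the exponent, and then read off the dimensions. One direction is already available: by \eqref{eq:logarithmic measure in A is controlled by Hausdorff measure}, $\Lambda^{t,\beta}(E)\le \mathcal H^t(E)$ for every $t\ge 0$. For the other direction, fix $t\ge0$ and $\epsilon>0$. For $0<r\le\delta$ with $\delta$ small we have $r^{\epsilon}|\ln r|^{\beta}\le 1$, because $r^\epsilon|\ln r|^\beta\to0$ as $r\to0^+$. Hence
\[
r^{t+\epsilon}\le \frac{r^{t}}{|\ln r|^{\beta}}=g_{t,\beta}(r),\qquad 0<r\le\delta,
\]
and this inequality also holds trivially at $r=0$. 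Comparing $\delta$-covers term by term then gives $\mathcal H^{t+\epsilon}_\delta(E)\le \Lambda^{t,\beta}_\delta(E)$ for all small $\delta$. Letting $\delta\to0^+$ yields
\[
\mathcal H^{t+\epsilon}(E)\le \Lambda^{t,\beta}(E)\le \mathcal H^{t}(E)\qquad\text{for all } t\ge0,\ \epsilon>0.
\]

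Next I would check that the two expressions in the definition of $\Lambda_{\dim}$ agree, so that $\Lambda_{\dim}$ is well defined. The key input is monotonicity in $t$ with a gap. For $t<t'$ and small $\delta$ we have $g_{t',\beta}(r)=r^{t'-t}g_{t,\beta}(r)\le \delta^{t'-t}g_{t,\beta}(r)$, so $\Lambda^{t',\beta}_\delta\le\delta^{t'-t}\Lambda^{t,\beta}_\delta$. Thus if $\Lambda^{t,\beta}(E)<\infty$, then $\Lambda^{t',\beta}(E)=0$. This is the same argument as for $\mathcal H^t$, and it shows that $\{t:\Lambda^{t,\beta}(E)=0\}$ is an up-interval, $\{t:\Lambda^{t,\beta}(E)=\infty\}$ is a down-interval, and the infimum of the first equals the supremum of the second.

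Finally I would deduce \eqref{eq:equality between dimensions}. Let $d=\mathcal H_{\dim}E$. If $t>d$, then $\mathcal H^t(E)=0$, hence $\Lambda^{t,\beta}(E)=0$, which gives $\Lambda_{\dim}E\le d$. If $t<d$, pick $\epsilon>0$ with $t+\epsilon<d$. Then $\mathcal H^{t+\epsilon}(E)=\infty$, so the lower comparison gives $\Lambda^{t,\beta}(E)=\infty$, hence $\Lambda_{\dim}E\ge d$. The boundary cases $d=0$ and $d=\infty$ are handled by the same inequalities. Since $\beta$ appears nowhere in the conclusion, the independence of $\Lambda_{\dim}$ from $\beta$ follows.

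The only genuinely delicate step is the lower comparison. One must choose $\delta$ depending on $\epsilon$ and $\beta$, keep $\delta<1$ so that $g_{t,\beta}$ stays finite, and treat covers containing sets of diameter $0$; the convention $g_{t,\beta}(0)=0$, together with $0^{t+\epsilon}=0$, takes care of those. Everything else is routine.
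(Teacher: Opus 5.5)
Your proposal is correct and follows essentially the paper's argument. Both compare gauges as $\mathcal H^{t+\epsilon}(E)\le\Lambda^{t,\beta}(E)\le\mathcal H^{t}(E)$ and then read off the critical exponent; the paper uses $\mathcal H^{t-\epsilon}$ on the right, while you reuse the sharper bound from the preceding proposition, and you also add an explicit check, which the paper takes for granted, that the infimum and supremum in the definition of $\Lambda_{\dim}$ coincide.
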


\begin{proof}
Fix $\beta\in (0,\infty)$. Let $s>0$ and $\e>0$ be arbitrary. There exists $r_0=r_0(\varepsilon,\beta)$ such that
for all $0<r<r_0$,
\begin{equation}
\label{eq:gauge-comparison}
r^{s+\varepsilon}
\;\le\;
\frac{r^s}{|\ln r|^\beta}
\;\le\;
r^{s-\varepsilon}.
\end{equation}
Inequalities \eqref{eq:gauge-comparison} follow from $
\lim_{r\to 0^+} r^\e |\ln r|^\beta = 0,\, \lim_{r\to 0^+} r^{-\e} |\ln r|^{\beta} = \infty.$
Let $\delta\in(0,r_0)$ and let $\{E_i\}$ be a $\delta$--cover of $E$.
Using \eqref{eq:gauge-comparison}, we obtain
\[
\sum_i \diam(E_i)^{s+\varepsilon}
\;\le\;
\sum_i g_{s,\beta}(\diam(E_i))\;\le\;
\sum_i \diam(E_i)^{s-\varepsilon}.
\]
Taking the infimum over all such covers and then letting $\delta\to 0^+$ yields
\begin{equation}
\label{eq:measure-comparison}
\mathcal H^{s+\e}(E)
\;\le\;
\Lambda^{s,\beta}(E)
\;\le\;
\mathcal H^{s-\e}(E).
\end{equation}
Let $d:=\mathcal{H}_{\dim} E$. If $s>d$, choose $\e>0$ such that $s-\e>d$. Then $\mathcal H^{s-\e}(E)=0$, and by \eqref{eq:measure-comparison} we obtain $\Lambda^{s,\beta}(E)=0$. Hence $\Lambda_{\dim}E \le s$. Since $s$ is an arbitrary number greater than $d$, we get $\Lambda_{\dim} E \le d$. If $s<d$, choose $\e>0$ such that $s+\e<d$. Then $\mathcal H^{s+\e}(E)=\infty$, and again by \eqref{eq:measure-comparison} we have $\Lambda^{s,\beta}(E)=\infty$. Hence $\Lambda_{\dim}E \ge s$. Since $s$ is an arbitrary number less than $d$, we get $\Lambda_{\dim} E \ge d$. Combining the two inequalities yields \eqref{eq:equality between dimensions}.
\end{proof}

\section{Hausdorff dimension of the set of Lebesgue points for $B^r_{q,\infty}$-functions}
\begin{lemma}
\label{lem:taking almost subsequence}
Let $\{A_n\}_{n\in\N}\subset \R$ be such that $\lim_{n\to \infty}A_n=0$. Let $\{\ell_n\}_{n\in\N}\subset \N$ be any sequence such that $\lim_{n\to \infty}\ell_n=\infty$. Then,
\begin{equation}
\lim_{n\to \infty}A_{\ell_n}=0.
\end{equation}
\end{lemma}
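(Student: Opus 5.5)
The plan is to argue directly from the $\varepsilon$--$N$ definition of convergence. We may combine it with the definition of $\ell_n\to\infty$ in $\N$. Note that $\{\ell_n\}$ need not be increasing or injective, so this is not literally a subsequence. For that reason we should not appeal to the standard subsequence fact; the argument has to go through the definitions.

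First, fix $\varepsilon>0$. Since $A_n\to 0$, there is $N\in\N$ such that $|A_m|<\varepsilon$ for every $m\ge N$. Second, since $\ell_n\to\infty$, there is $K\in\N$ such that $\ell_n\ge N$ for every $n\ge K$. This uses exactly the meaning of divergence to $+\infty$, applied with the threshold $N$.

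Combining the two, for every $n\ge K$ we get $m:=\ell_n\ge N$, and hence $|A_{\ell_n}|<\varepsilon$. Since $\varepsilon$ was arbitrary, $\lim_{n\to\infty}A_{\ell_n}=0$.

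There is no genuine obstacle here. The only point requiring care is the observation above, that repetitions or non-monotonicity of $\ell_n$ are harmless because the estimate $|A_m|<\varepsilon$ holds uniformly for all indices $m\ge N$.
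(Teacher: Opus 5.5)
Your proposal is correct and follows essentially the same argument as the paper. Both fix $\varepsilon>0$, take a threshold $N$ from $A_n\to 0$, then use $\ell_n\to\infty$ to find $K$ with $\ell_n\ge N$ for all $n\ge K$, which gives $|A_{\ell_n}|<\varepsilon$ without any monotonicity of $\ell_n$.
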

The point of the lemma is that $A_{\ell_n}$ is not necessarily a subsequence of $A_n$, since we do not assume that $\ell_n<\ell_{n+1}$ for every $n\in\N$.
\begin{proof}
Let $\e>0$. Then there exists $N_0$ such that $|A_n|<\e$ for all $n>N_0$. Since $\lim_{n\to \infty}\ell_n=\infty$, there exists $L$ such that $\ell_n>N_0$ for all $n>L$. Thus, for arbitrary $\e>0$, there exists $L$ such that for every $n>L$ we have $|A_{\ell_n}|<\e$. Therefore, $\lim_{n\to \infty}A_{\ell_n}=0$.
\end{proof}

\begin{lemma}
\label{lem:D equals Dseq}
Let $X$ be a metric space. Let $\Sigma$ be the collection of all closed balls in $X$. Let $F:\Sigma\to [0,\infty]$ be a monotone increasing set function, meaning that $A\subset B$, $A,B\in \Sigma$, implies $F(A)\leq F(B)$. Fix $\xi\in (0,\infty)$ and $\alpha\in [0,\infty)$. Define 
\begin{equation}
\label{def: def sequential D}
D_{seq}:=\Set{x\in X}[\limsup_{n\to \infty} \left(\frac{n^\alpha}{\left(e^{-n}\right)^{\xi}}F\left(B_{e^{-n}}(x)\right)\right)>0]\footnote{"seq" is for sequence},
\end{equation}
and
\begin{equation}
\label{def: def non-sequential D}
D:=\Set{x\in X}[\limsup_{\sigma\to 0^+} \left(\frac{|\ln \sigma|^\alpha}{\sigma^{\xi}}F\left(B_{\sigma}(x)\right)\right)>0].
\end{equation}
Then,
\begin{equation}
D=D_{seq}.
\end{equation}
\end{lemma}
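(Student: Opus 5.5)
We prove the two inclusions separately. The inclusion $D_{seq}\subset D$ is immediate. If $x\in D_{seq}$, set $\sigma_n:=e^{-n}\to 0^+$. Then $|\ln\sigma_n|^\alpha=n^\alpha$, so the quantity in \eqref{def: def non-sequential D} evaluated along $\sigma_n$ is exactly the sequence in \eqref{def: def sequential D}. Its $\limsup$ is positive, and the $\limsup$ over $\sigma\to0^+$ dominates the $\limsup$ along any particular sequence tending to $0^+$. Hence $x\in D$.

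For the reverse inclusion $D\subset D_{seq}$ we discretize the radius using monotonicity of $F$. Let $x\in D$. There are $\tau>0$ and $\sigma_k\to0^+$ with
\[
\frac{|\ln \sigma_k|^\alpha}{\sigma_k^{\xi}}F(B_{\sigma_k}(x))>\tau .
\]
For each $k$ we choose the integer $\ell_k:=\lfloor|\ln\sigma_k|\rfloor$, so that $e^{-\ell_k-1}<\sigma_k\le e^{-\ell_k}$ and $\ell_k\to\infty$. Monotonicity of $F$ then gives $F(B_{\sigma_k}(x))\le F(B_{e^{-\ell_k}}(x))$.

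It remains to compare the prefactors. We have $\sigma_k^{-\xi}\le e^{\xi(\ell_k+1)}=e^{\xi}(e^{-\ell_k})^{-\xi}$. We also have $|\ln\sigma_k|^\alpha<(\ell_k+1)^\alpha\le 2^\alpha\ell_k^\alpha$ once $\ell_k\ge1$ (recall $\alpha\ge0$). Therefore
\[
\frac{\ell_k^\alpha}{(e^{-\ell_k})^{\xi}}F(B_{e^{-\ell_k}}(x))\ge e^{-\xi}2^{-\alpha}\tau
\]
for all large $k$.

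The only subtle point, and the only place needing care, is that $\{\ell_k\}$ need not be strictly increasing, so it need not be a subsequence. We handle this with Lemma~\ref{lem:taking almost subsequence}. Suppose instead that the sequence $A_n:=n^\alpha e^{n\xi}F(B_{e^{-n}}(x))$ has $\limsup A_n=0$, i.e. $A_n\to0$ since $A_n\ge0$. Since $\ell_k\to\infty$, Lemma~\ref{lem:taking almost subsequence} gives $A_{\ell_k}\to0$. This contradicts the uniform lower bound $A_{\ell_k}\ge e^{-\xi}2^{-\alpha}\tau>0$ just obtained. Hence $\limsup_n A_n>0$, so $x\in D_{seq}$.

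If $F$ takes the value $\infty$, both sides are simply infinite and the argument goes through unchanged. This completes $D=D_{seq}$.
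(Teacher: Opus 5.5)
Your proof is correct and follows essentially the same route as the paper. Both bracket the radius as $e^{-(\ell+1)}<\sigma\le e^{-\ell}$, use monotonicity of $F$ to pass to $B_{e^{-\ell}}(x)$, and invoke Lemma~\ref{lem:taking almost subsequence} to handle the non-monotone indices. The paper argues by contrapositive, showing that if $x\notin D_{seq}$ then every sequence $\sigma_n\to 0^+$ gives limit $0$ via the factor $e^{\xi}(1+1/\ell_n)^\alpha$, whereas you argue by contradiction with the cruder constant $e^{\xi}2^{\alpha}$. One minor point: for $\alpha=0$ your strict inequality $|\ln\sigma_k|^\alpha<(\ell_k+1)^\alpha$ should read $\le$, which is all you use.
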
  

\begin{proof}
Note that $n^\alpha=|\ln e^{-n}|^\alpha$. Therefore, by the definition of the upper limit, we have $D_{seq}\subset D$. We now show that $X\setminus D_{seq}\subset X\setminus D$, and therefore $D\subset D_{seq}$. Let $x\in X\setminus D_{seq}$. We need to prove that for every sequence $\sigma_n$ of positive numbers converging to zero, we have
\begin{equation}
\label{eq:limit with general sigma n}
\lim_{n\to \infty} \left(\frac{|\ln \sigma_n|^\alpha}{\sigma_n^{\xi}}F\left(B_{\sigma_n}(x)\right)\right)=0.
\end{equation}
Without loss of generality assume that $\sigma_n\leq e^{-1}$ for every $n\in\N$. For every $n\in\N$ choose $\ell_n\in\N$ such that
\begin{equation}
\label{eq:choosing subsequence}
e^{-(\ell_n+1)}<\sigma_{n}\leq e^{-\ell_n}.  
\end{equation}
Thus, using \eqref{eq:choosing subsequence} and the monotonicity of $F$, we obtain
\begin{multline}
\label{eq:estimate for general sigma n}
\left|\ln\sigma_{n}\right|^\alpha\frac{1}{\sigma_{n}^\xi}F\left(B_{\sigma_{n}}(x)\right)
\leq |\ell_n+1|^\alpha\left(e^{(\ell_n+1)}\right)^{\xi}F\left(B_{e^{-\ell_n}}(x)\right)
\\
=e^{\xi}\left|1+\frac{1}{\ell_n}\right|^\alpha \left(\frac{\ell_n^{\alpha}}{\left(e^{-\ell_n}\right)^{\xi}}F\left(B_{e^{-\ell_n}}(x)\right)\right).
\end{multline}
Note that by \eqref{eq:choosing subsequence} and since $\sigma_n$ tends to zero when $n\to \infty$, we get $\lim_{n\to \infty}\ell_n=\infty$, and therefore, by the assumption that $x\notin D_{seq}$ and Lemma \ref{lem:taking almost subsequence}, we obtain
\begin{equation}
\label{eq:convergence with subsequence}
\lim_{n\to \infty}\left(\frac{\ell_n^{\alpha}}{\left(e^{-\ell_n}\right)^{\xi}}F\left(B_{e^{-\ell_n}}(x)\right)\right)=0.
\end{equation}
Therefore, taking the limit as $n\to \infty$ in \eqref{eq:estimate for general sigma n} and using \eqref{eq:convergence with subsequence} and that $\lim_{n\to \infty}\ell_n=\infty$, we obtain \eqref{eq:limit with general sigma n}. We conclude that $x\notin D$.
\end{proof}

\begin{lemma}
\label{lem:varphi-Hasdorff measure of D}
Let $X$ be a metric space, and let $\mu$ be an Ahlfors $s$-regular measure on $X$ for some $s\in (0,\infty)$. Let $0\leq r\leq 1$, $0<q<\infty$ be such that $rq\leq s$. Let $(Y,\rho)$ be a metric space, and let $u\in B^r_{q,\infty}(X,\mu;Y)$. Define for $\alpha \in [0,\infty)$ the set
\begin{equation}
\label{def: def D alpha}
D_\alpha:=\Set{x\in X}[\limsup_{\e \to 0^+} \left(|\ln\e|^\alpha\fint_{B_{\e}(x)} \fint_{B_{\e}(x)}
\rho\left(u(z),u(y)\right)^q\, d\mu(z)\, d\mu(y)\right)>0].
\end{equation}
Then, 
\begin{equation}
\label{def: def D alpha with en}
D_\alpha=\Set{x\in X}[\limsup_{n \to \infty} n^\alpha\fint_{B_{\e_n}(x)} \fint_{B_{\e_n}(x)}
\rho\left(u(z),u(y)\right)^q\, d\mu(z)\, d\mu(y)>0],\quad \e_n:=e^{-n},
\end{equation}
and if $\beta>1+\alpha$, then
\begin{equation}
\label{eq: D alpha is negligible w.r.t logarithmic measure}
\Lambda^{s-rq,\beta}(D_\alpha)=0.
\end{equation} 
In particular, the Hausdorff dimension of $D_\alpha$ is at most $s - rq$.
\end{lemma}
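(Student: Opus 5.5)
The plan has three parts: reduce to a sequence of scales, apply part 3 of Theorem~\ref{thm:Besov constant controls double average integral}, and compare dimensions.

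\emph{Step 1: the sequential description \eqref{def: def D alpha with en}.} Apply Lemma~\ref{lem:D equals Dseq} with $\xi=0$ and
\[
F(B_\sigma(x)):=\fint_{B_\sigma(x)}\fint_{B_\sigma(x)}\rho(u(z),u(y))^q\,d\mu\,d\mu .
\]
The difficulty is that this double average is not monotone in the ball. I would instead use
\[
F(B_\sigma(x)):=\int_{B_\sigma(x)}\int_{B_\sigma(x)}\rho(u(z),u(y))^q\,d\mu\,d\mu ,
\]
which is monotone, and take $\xi=2s$. Ahlfors regularity gives $\mu(B_\sigma(x))^2\simeq\sigma^{2s}$, so $F(B_\sigma(x))/\sigma^{2s}$ is comparable to the double average up to constants. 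Positivity of the limsup is unaffected by these constants, so Lemma~\ref{lem:D equals Dseq} yields \eqref{def: def D alpha with en}.

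\emph{Step 2: the measure estimate.} Apply part~3 of Theorem~\ref{thm:Besov constant controls double average integral} with
\[
\Omega=X,\quad W(z,y)=\rho(u(z),u(y))^q,\quad \zeta(A)=g_{s-rq,\beta}(\diam A),
\]
the scales $2\e_n=2e^{-n}$ (so that the balls $B_{\e_n}$ appear), and weights $\eta_n=n^\alpha$. Proposition~\ref{prop:properties of the ln-measure} supplies the four required properties of $\zeta$, so $\psi=\Lambda^{s-rq,\beta}$. The summand in \eqref{eq:Besov constant is more that integral of lower limiting double integralgensum} is
\[
n^\alpha\,\frac{\zeta(B_{2\e_n}(x))}{(2\e_n)^s}\int_X\fint_{B_{2\e_n}(x)}W\,d\mu\,d\mu
\le n^\alpha\,\frac{(4\e_n)^{s-rq}}{|\ln(4\e_n)|^\beta\,(2\e_n)^s}\,(2\e_n)^{rq}\,\|u\|^q_{B^r_{q,\infty}}.
\]
This is at most $C\,n^{\alpha}(n-\ln 4)^{-\beta}\|u\|^q$, which is summable precisely when $\beta>1+\alpha$. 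Part 3 then gives $\psi(A)=0$, where $A$ is the limsup set built from $n^\alpha\fint_{B_{\e_n}}\fint_{B_{\e_n}}W$. By Step 1 this set is exactly $D_\alpha$, which proves \eqref{eq: D alpha is negligible w.r.t logarithmic measure}.

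\emph{Step 3: dimension.} Since $\Lambda^{s-rq,\beta}(D_\alpha)=0$, Definition~\ref{def:Hausdorff dimension and logarithmic Hausdorff dimension} gives $\Lambda_{\dim}D_\alpha\le s-rq$. Proposition~\ref{prop:equality between Hausdorff-D} then shows $\mathcal H_{\dim}D_\alpha\le s-rq$. An alternative route is to observe that $\mathcal H^{s-rq+\epsilon}(D_\alpha)\le\Lambda^{s-rq,\beta}(D_\alpha)=0$ for every $\epsilon>0$.

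\emph{Expected obstacle.} The main subtlety is Step 1, the monotonicity issue: $F$ must be chosen monotone for Lemma~\ref{lem:D equals Dseq} to apply, and one must check that the two-sided Ahlfors bounds keep positivity of the limsup invariant. A secondary point is the edge case $s-rq=0$. There the gauge is $|\ln r|^{-\beta}$, which is still admissible, but the factor $5^{s-rq}$ in the doubling constant $\lambda$ should be checked.
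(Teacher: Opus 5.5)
Your proposal is correct and follows the paper's proof step for step. The paper applies part~3 of Theorem~\ref{thm:Besov constant controls double average integral} with $W=\rho(u(z),u(y))^q$, $\eta_n=n^\alpha$, scales $2e^{-n}$ and $\zeta=g_{s-rq,\beta}\circ\diam$, gets summability from $\sum n^{\alpha-\beta}<\infty$, identifies the limsup set with $D_\alpha$ via Lemma~\ref{lem:D equals Dseq} using the monotone $F(B)=\int_B\int_B\rho(u(z),u(y))^q\,d\mu\,d\mu$ with $\xi=2s$ and Ahlfors regularity, and concludes with Proposition~\ref{prop:equality between Hausdorff-D}, exactly as you do. The differences are cosmetic: you identify the sets before invoking the theorem, and you should sum only over $n\ge n_0$, as the paper does, so that $|\ln(4e^{-n})|=n-\ln 4$ is positive and large.
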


\begin{proof}
We intend to use the third part of Theorem~\ref{thm:Besov constant controls double average integral}.
Let us set
\begin{equation}
\label{eq:data for using theorem}
W(z,y):=\rho\big(u(z),u(y)\big)^q,\quad
\eta_n:=n^\alpha,\quad
\varepsilon_n:=e^{-n},\quad \zeta(A):=g_{s-rq,\beta}\!\left(\diam(A)\right),
\end{equation}
where $g_{s-rq,\beta}$ is defined in~\eqref{ggg}. Note first that, by Proposition~\ref{prop:properties of the ln-measure}, our choice of $\zeta$ is admissible, namely, it satisfies the four conditions
\eqref{eq:property of zeta1}, \eqref{eq:property of zeta}, \eqref{eq:property of zeta3}, and \eqref{eq:property of zeta5}, which are required in order to apply Theorem~\ref{thm:Besov constant controls double average integral}. We now turn to the verification of condition~\eqref{eq:Besov constant is more that integral of lower limiting double integralgensum}. There exists $n_0\in \N$ such that $|n-2\ln 2|\geq \tfrac{1}{2}n$ for every $n\geq n_0$. Thus, for every $n\geq n_0$, we have by \eqref{eq:data for using theorem}

\begin{multline}
\label{eq:calculation of zeta}
\frac{\eta_n}{(2\e_n)^s}\zeta\big(B_{(2\e_n)}(x)\big)
=\frac{n^\alpha}{(2\e_n)^s}g_{s-rq,\beta}(4\e_n)=\frac{n^\alpha}{(2\e_n)^s}\frac{(4\e_n)^{s-rq}}{|\ln (4\e_n)|^{\beta}}
\\
=\frac{n^{\alpha}\,2^{s-rq}}{|n-2\ln 2|^\beta}(2\e_n)^{-rq}
\leq \frac{2^{s-rq+\beta}}{n^{\beta-\alpha}}(2\e_n)^{-rq}.
\end{multline}
Therefore, using \eqref{eq:calculation of zeta} and that $\beta-\alpha>1$, we obtain
\begin{multline}
\label{eq:Besov constant is more that integral of lower limiting double integralgensumolilohgjg}
\sum\limits_{n=n_0}^{\infty}\left(\eta_n\int_X\frac{1}{(2\e_n)^s}\zeta\big(B_{(2\e_n)}(x)\big)\Bigg\{\fint_{B_{(2\e_n)}(x)} W(z,x)d\mu(z)\Bigg\}d\mu(x)\right)
\\
=\sum\limits_{n=n_0}^{\infty}\frac{2^{s-rq+\beta}}{n^{\beta-\alpha}}\int_X\Bigg\{\fint_{B_{(2\e_n)}(x)} \frac{\rho\left(u(z),u(x)\right)^q}{(2\e_n)^{rq}}d\mu(z)\Bigg\}d\mu(x)
\\
\leq 2^{s-rq+\beta}
\|u\|^q_{B^r_{q,\infty}(X,\mu;Y)}\left(\sum\limits_{n=n_0}^{\infty}\frac{1}{n^{\beta-\alpha}}\right)
<\infty\,.
\end{multline}
Thus we can apply the
third part of
Theorem \ref{thm:Besov constant controls double average integral} to deduce that the set $A$ defined as
\begin{equation}
\label{eq:def-D for general rytry}
A:=\Set{x\in X}[\limsup_{n \to \infty} n^\alpha\fint_{B_{\e_n}(x)} \fint_{B_{\e_n}(x)}
\rho\left(u(z),u(y)\right)^q\, d\mu(z)\, d\mu(y)>0],
\end{equation}
satisfies
\begin{equation}
\label{eq:logarithmic measure of A is zero}
\Lambda^{s-rq,\beta}(A)=0.
\end{equation} 
Pay attention to the fact that the third part of Theorem~\ref{thm:Besov constant controls double average integral} yields the negligibility of a set with respect to a general measure $\psi$ arising from Carath\'eodory’s construction, whereas here we take $\psi=\Lambda^{s-rq,\beta}$. Note that the logarithmic measure $\Lambda^{s-rq,\beta}$, which is defined via a specific choice of the function $\zeta$, allows us to ensure the finiteness of the expression on the left-hand side of \eqref{eq:Besov constant is more that integral of lower limiting double integralgensumolilohgjg}, by using the Besov property together with the finiteness of the series $\sum_{n=1}^\infty\frac{1}{n^{\beta-\alpha}}$, where $\beta-\alpha>1$.

Since $\mu$ is an Ahlfors measure, we have 
\begin{equation}
\label{eq:Ahlfors property with power 2}
C^{-2}(\e_n)^{-2s}\leq \mu(B_{\e_n}(x))^{-2}\leq c^{-2}(\e_n)^{-2s}.
\end{equation}
Thus, \eqref{eq:Ahlfors property with power 2} and \eqref{eq:def-D for general rytry} give
\begin{equation}
\label{eq:def-D for general rytry1}
A=\Set{x\in X}[\limsup_{n \to \infty} \frac{n^\alpha}{\e_n^{2s}}F(B_{\e_n}(x))>0],\,F(B):=\int_{B} \int_{B}
\rho\left(u(z),u(y)\right)^q\, d\mu(z)\, d\mu(y);
\end{equation}
where the set function $F$ is defined for every Borel set $B$. 
By Lemma \ref{lem:D equals Dseq} with $\xi=2s$, we get
\begin{equation}
\label{eq:def-D for general rklkkkl4}
A=\Set{x\in X}[\limsup_{\e \to 0^+} \left(\frac{|\ln\e|^\alpha}{\e^{2s}} F(B_{\e}(x))\right)>0],
\end{equation}
and, again using the fact that $\mu$ is Ahlfors regular, we obtain from \eqref{eq:def-D for general rklkkkl4} and \eqref{def: def D alpha}
\begin{equation}
\label{eq:def-D for general rklkkkl}
A=\Set{x\in X}[\limsup_{\e \to 0^+} \left(|\ln\e|^\alpha\fint_{B_{\e}(x)} \fint_{B_{\e}(x)}
\rho\left(u(z),u(y)\right)^q\, d\mu(z)\, d\mu(y)\right)>0]=D_\alpha.
\end{equation}
By \eqref{eq:logarithmic measure of A is zero} and \eqref{eq:def-D for general rklkkkl}, we deduce \eqref{eq: D alpha is negligible w.r.t logarithmic measure}.
Finally, by the definition of the logarithmic dimension, Definition \ref{def:Hausdorff dimension and logarithmic Hausdorff dimension}, we have
$\Lambda_{\dim} D_{\alpha}\leq s-rq$, and thus, by Proposition~\ref{prop:equality between Hausdorff-D}, we conclude that
\[
\mathcal{H}_{\dim} D_\alpha=\Lambda_{\dim} D_{\alpha}\leq s-rq.
\]
This completes the proof of the lemma.
\end{proof}

\begin{lemma}
\label{lem:sum of metrics}
Let $(Y,\rho)$ be a metric space. Let $\{v_n\}_{n\in\N}$ be any sequence such that 
\begin{equation}
\sum_{n=1}^\infty\rho(v_n,v_{n+1})<\infty.
\end{equation}
Then $\{v_n\}_{n\in\N}$ is a Cauchy sequence.
\end{lemma}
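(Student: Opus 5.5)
The plan is the standard triangle-inequality estimate: show that the tail sums of consecutive distances tend to zero, which is exactly the Cauchy condition. Set $S:=\sum_{n=1}^\infty\rho(v_n,v_{n+1})<\infty$ and define the tails $T_k:=\sum_{n=k}^\infty\rho(v_n,v_{n+1})$. Since the series of nonnegative terms converges, $T_k\to 0$ as $k\to\infty$.

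For indices $m>n$, apply the triangle inequality in $(Y,\rho)$ repeatedly along the chain $v_n,v_{n+1},\dots,v_m$:
\[
\rho(v_n,v_m)\le \sum_{j=n}^{m-1}\rho(v_j,v_{j+1})\le T_n .
\]
This follows by a simple induction on $m-n$.

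Given $\e>0$, choose $N$ with $T_N<\e$. Because the terms are nonnegative, the tails are monotone, so $T_n\le T_N<\e$ for all $n\ge N$. Hence for all $m>n\ge N$ we get $\rho(v_n,v_m)<\e$, and by symmetry of $\rho$ this holds for all $m,n\ge N$. That is the Cauchy property.

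There is no real obstacle here. The only point to state explicitly is the induction behind the chained triangle inequality, together with the monotonicity $T_n\le T_N$ for $n\ge N$.
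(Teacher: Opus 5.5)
Your proposal is correct and follows essentially the same argument as the paper. Both pick a tail index beyond which the remaining sum of consecutive distances is below $\varepsilon$, then bound $\rho(v_j,v_{j+m})$ by the chained triangle inequality.
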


\begin{proof}
Let $\e>0$. There exists $M\in\N$ sufficiently large such that
\[
\sum_{n=M}^\infty\rho(v_n,v_{n+1})<\e.
\]
Therefore, for every $j\geq M$ and every $m\in\N$, by the triangle inequality we obtain
\begin{equation}
\rho(v_j,v_{j+m})
\leq \sum_{n=j}^{j+m-1}\rho(v_n,v_{n+1})
\leq \sum_{n=M}^{\infty}\rho(v_n,v_{n+1})
<\e.
\end{equation}
Thus, $\{v_n\}_{n\in\N}$ is Cauchy.
\end{proof}

The following theorem shows that for $\beta>q+1$, $\Lambda^{s-rq,\beta}$–almost every point is a Lebesgue point of a Besov function in the class $B^r_{q,\infty}$. In other words, the exceptional set of non–-Lebesgue points has $\Lambda^{s-rq,\beta}$–measure zero. Consequently, its logarithmic Hausdorff dimension is at most $s-rq$, and therefore its (usual) Hausdorff dimension is also at most $s-rq$, that proves Theorem~\ref{cor:Hasdorff dimension of D is at most s-rq,intro}. More precisely:

\begin{theorem}[Lebesgue points of Besov functions $B^r_{q,\infty}$]
\label{thm:varphi-Hasdorff measure of D}
Let $X$ be a metric space, and let $\mu$ be an Ahlfors $s$-regular measure on $X$ for some $s\in (0,\infty)$. Let $0\leq r\leq 1$, $0<q<\infty$ be such that $rq\leq s$. Let $(Y,\rho)$ be a complete metric space, and let $u\in B^r_{q,\infty}(X,\mu;Y)$. Let us denote 
\begin{equation}
\label{eq:def of N}
\mathcal{N}:=\Set{x\in X}[\limsup\limits_{\e\to 0^+}\fint_{B_{\e}(x)}
\rho\left(u(z),y\right)^q\, d\mu(z)>0\quad \forall y\in Y].
\end{equation}
If $\beta>q+1$, then
\begin{equation}
\Lambda^{s-rq,\beta}(\mathcal{N})=0.
\end{equation} 
In particular, for every $x\in X\setminus \mathcal{N}$, there exists $u^*(x)\in Y$, such that 
\begin{equation}
\label{eq:def-D for general rLebesgue}
\lim_{\varepsilon \to 0^+} \left(\fint_{B_{\varepsilon}(x)}
\rho\left(u(z),u^*(x)\right)^q\, d\mu(z)\right) = 0.
\end{equation}
Therefore, almost every point is a Lebesgue point of $u$ with respect to the measure $\Lambda^{s-rq,\beta}$. In particular, the exceptional set $\mathcal{N}$ of points that are not Lebesgue points has Hausdorff dimension at most $s-rq$.
\end{theorem}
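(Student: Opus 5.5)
The plan is to reduce the statement to Lemma~\ref{lem:varphi-Hasdorff measure of D} with a suitable exponent $\alpha$, and then use a dyadic-type (here $e^{-n}$) telescoping argument together with completeness of $Y$.

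\textbf{Choice of $\alpha$ and the good set.} Take $\alpha$ with $q<\alpha$ and $\beta>1+\alpha$; this is possible precisely because $\beta>q+1$. For technical reasons with $q<1$ it is convenient to take $\alpha>\max\{q,1\}$ when $\beta$ allows; in general just $\alpha>q$ suffices as below. Lemma~\ref{lem:varphi-Hasdorff measure of D} gives $\Lambda^{s-rq,\beta}(D_\alpha)=0$. It then suffices to show $\mathcal N\subset D_\alpha$, i.e.\ that every $x\notin D_\alpha$ is a Lebesgue point. For such $x$, with $\e_n=e^{-n}$ and $a_n:=\fint_{B_{\e_n}(x)}\fint_{B_{\e_n}(x)}\rho(u(z),u(y))^q\,d\mu\,d\mu$, we have $n^\alpha a_n\to0$, so $a_n\le n^{-\alpha}$ for large $n$.

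\textbf{Constructing $u^*(x)$.} By Ahlfors regularity, $\mu(B_{\e_{n+1}}(x))\ge c'\mu(B_{\e_n}(x))$, so the double average over $B_{\e_n}\times B_{\e_{n+1}}$ is $\le C a_n$. Choose points $v_n\in B_{\e_n}(x)$ with $\fint_{B_{\e_n}(x)}\rho(u(z),v_n)^q d\mu(z)\le a_n$, where $v_n=u(y_n)$ for a suitable $y_n$; this is possible since the $y$-average of the inner integral equals $a_n$. Then
\[
\rho(v_n,v_{n+1})^q\le C_q\fint_{B_{\e_{n+1}}(x)}\bigl(\rho(v_n,u(z))^q+\rho(u(z),v_{n+1})^q\bigr)d\mu(z)\le C(a_n+a_{n+1})\le C n^{-\alpha},
\]
using quasi-triangle $(a+b)^q\le C_q(a^q+b^q)$. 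Hence $\rho(v_n,v_{n+1})\le Cn^{-\alpha/q}$, which is summable since $\alpha/q>1$. By Lemma~\ref{lem:sum of metrics} and completeness, $v_n\to u^*(x)\in Y$, with tail $\rho(v_n,u^*(x))\le C\sum_{k\ge n}k^{-\alpha/q}\to0$.

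\textbf{Lebesgue point.} Then
\[
\fint_{B_{\e_n}(x)}\rho(u(z),u^*(x))^qd\mu\le C_q\bigl(a_n+\rho(v_n,u^*(x))^q\bigr)\to0.
\]
For general $\e\in(\e_{n+1},\e_n]$, Ahlfors regularity gives $\fint_{B_\e}\le C\fint_{B_{\e_n}}$, so the full limit as $\e\to0^+$ is $0$. Thus $x\notin\mathcal N$, giving $\mathcal N\subset D_\alpha$, so $\Lambda^{s-rq,\beta}(\mathcal N)=0$. The dimension bound then follows from Proposition~\ref{prop:equality between Hausdorff-D}: $\Lambda^{s-rq,\beta}(\mathcal N)=0$ gives $\Lambda_{\dim}\mathcal N\le s-rq$, hence $\mathcal H_{\dim}\mathcal N\le s-rq$.

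The main obstacle is the summability step: it is exactly where the logarithmic weight $n^\alpha$ with $\alpha>q$ is needed. That requirement forces $\beta>q+1$, and the selection of the $v_n$ must be made within $Y$ without any linear averaging.
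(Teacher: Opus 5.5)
Your proposal is correct and follows the paper's proof. You pick $\alpha>q$ with $\beta>\alpha+1$, invoke Lemma~\ref{lem:varphi-Hasdorff measure of D} to get $\Lambda^{s-rq,\beta}(D_\alpha)=0$, and show $\mathcal N\subset D_\alpha$: from $n^\alpha a_n\to0$ you build $v_n$ with $\sum_n\rho(v_n,v_{n+1})<\infty$, then use completeness of $Y$ and Ahlfors regularity to pass from the radii $e^{-n}$ to all $\e$. The differences are cosmetic: you take $v_n=u(y_n)$ for a good point $y_n\in B_{\e_n}(x)$, so that $\fint_{B_{\e_n}(x)}\rho(u(z),v_n)^q\,d\mu(z)\le a_n$ exactly, and a single intermediate point $z\in B_{\e_{n+1}}(x)$, whereas the paper takes near-infimizers over $Y$ with slack $2^{-n}$ and averages over two points; also, it is $y_n$, not $v_n$, that lies in $B_{\e_n}(x)$.
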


\begin{proof}
Assume that $\beta>q+1$. Then, there exists $\alpha>q$ with $\beta>\alpha+1$. Let $D_\alpha$ be as in Lemma \ref{lem:varphi-Hasdorff measure of D}. We prove first that for every $x\in X\setminus D_\alpha$, there exists $u^*(x)\in Y$ such that
\begin{equation}
\label{eq:LP outside D alpha}
\lim_{\varepsilon \to 0^+} \left(\fint_{B_{\varepsilon}(x)}
\rho\left(u(z),u^*(x)\right)^q\, d\mu(z)\right) = 0.
\end{equation}
Let $x\in X\setminus D_\alpha$. Then, by \eqref{def: def D alpha with en} of 
Lemma \ref{lem:varphi-Hasdorff measure of D}, and raising to the power $1/q$, we have
\begin{equation}
\label{eq:def-D for general rytryjyj}
\lim_{n \to \infty} n^{\alpha/q}\left(\fint_{B_{\e_n}(x)} \fint_{B_{\e_n}(x)}
\rho\left(u(z),u(w)\right)^q\, d\mu(z)\, d\mu(w)\right)^{\frac{1}{q}} =0,\quad \e_n:=e^{-n}.
\end{equation}
In particular, using that $\alpha>q$, we get from \eqref{eq:def-D for general rytryjyj}
\begin{equation}
\label{eq:def-D for general rytryjyjsum}
\sum\limits_{n=1}^\infty\left(\fint_{B_{\e_n}(x)} \fint_{B_{\e_n}(x)}
\rho\left(u(z),u(w)\right)^q\, d\mu(z)\, d\mu(w)\right)^{\frac{1}{q}} <\infty.
\end{equation}
Obtaining the finiteness of the series in~\eqref{eq:def-D for general rytryjyjsum} is the main idea of the proof, as we shall see below. This finiteness is made possible by the presence of the factor $n^{\alpha/q}$, with $\alpha>q$, which arises from the expression of $D_\alpha$ in~\eqref{def: def D alpha with en}. 
This finiteness will enable us to construct, for a given point $x\notin D_\alpha$, a Cauchy sequence $\{v_n\}$ in $Y$ such that $u$ converges to its limit in the integral sense. We now proceed with the formal argument.

By monotonicity of integral 
\begin{multline}
\label{eq:def-D for general rytryjyjsumfh}
\fint_{B_{\e_n}(x)} \fint_{B_{\e_n}(x)}
\rho\left(u(z),u(w)\right)^q\, d\mu(z)\, d\mu(w)\geq \fint_{B_{\e_n}(x)} \left(\inf\limits_{v\in Y}\fint_{B_{\e_n}(x)}
\rho\left(u(z),v\right)^q\, d\mu(z)\right) d\mu(w)
\\
=\inf\limits_{v\in Y}\fint_{B_{\e_n}(x)}
\rho\left(u(z),v\right)^q\, d\mu(z).
\end{multline}
Therefore, by \eqref{eq:def-D for general rytryjyjsum} and \eqref{eq:def-D for general rytryjyjsumfh} we have
\begin{equation}
\label{eq:def-D for general rytryjyjsumjj}
\sum\limits_{n=1}^\infty\left(\inf\limits_{v\in Y}\fint_{B_{\e_n}(x)}
\rho\left(u(z),v\right)^q\, d\mu(z)\right)^{\frac{1}{q}} <\infty.
\end{equation}
By the definition of the infimum, for every $n\in\N$, there exists $v_n\in Y$ such that
\begin{equation}
\label{eq:def-D for general rytryjyjsumjjjjjjj}
\left(\fint_{B_{\e_n}(x)}
\rho\left(u(z),v_n\right)^q\, d\mu(z)\right)^{\frac{1}{q}}<\left(\inf\limits_{v\in Y}\fint_{B_{\e_n}(x)}
\rho\left(u(z),v\right)^q\, d\mu(z)\right)^{\frac{1}{q}}+2^{-n},
\end{equation}
and therefore, from \eqref{eq:def-D for general rytryjyjsumjj}, we get
\begin{equation}
\label{eq:def-D for general rytryjyjsumjjjkjjry1}
\sum\limits_{n=1}^\infty\left(\fint_{B_{\e_n}(x)}
\rho\left(u(z),v_n\right)^q\, d\mu(z)\right)^{\frac{1}{q}} <\infty.
\end{equation}
By the triangle inequality we have for $\mu$-almost every $z,w\in X$
\begin{equation}
\label{eq:triangle inequality fro rho v_n, v_n+1}
\rho(v_n,v_{n+1})\leq \rho\left(v_n,u(z)\right)+\rho\left(u(z),u(w)\right)+\rho\left(u(w),v_{n+1}\right).
\end{equation}
Therefore, from \eqref{eq:triangle inequality fro rho v_n, v_n+1}, we obtain, for a constant $C_q$ depending only on $q$,
\begin{equation}
\label{eq:triangle inequality fro rho v_n, v_n+1 in power q}
\rho(v_n,v_{n+1})^q\leq C_q\Big(\rho\left(v_n,u(z)\right)^q+\rho\left(u(z),u(w)\right)^q+\rho\left(u(w),v_{n+1}\right)^q\Big).
\end{equation}
Note that in case $0<q\leq 1$, we can choose $C_q=1$. Using \eqref{eq:triangle inequality fro rho v_n, v_n+1 in power q}, we have
\begin{multline}
\label{eq:expression for rho vn, vn+1}
\rho(v_n,v_{n+1})^q=\fint_{B_{\e_{n+1}}(x)}\left( \fint_{B_{\e_n}(x)}
\rho\left(v_n,v_{n+1}\right)^q\, d\mu(z)\right) d\mu(w)
\\
\leq C_q \fint_{B_{\e_n}(x)}
\rho\left(v_n,u(z)\right)^q\, d\mu(z)
+C_q\fint_{B_{\e_{n+1}}(x)}\left( \fint_{B_{\e_n}(x)}
\rho\left(u(z),u(w)\right)^q\, d\mu(z)\right) d\mu(w)
\\
+C_q\fint_{B_{\e_{n+1}}(x)}
\rho\left(u(w),v_{n+1}\right)^q\, d\mu(w).
\end{multline} 
Therefore, by \eqref{eq:expression for rho vn, vn+1} to the power $1/q$, \er{eq:def-D for general rytryjyjsumjjjkjjry1} and \er{eq:def-D for general rytryjyjsum} we have
\begin{equation} 
\label{eq:def-D for general rytryjyjsumjjjkjjrygrf}
\sum\limits_{n=1}^\infty\rho(v_n,v_{n+1}) <\infty.
\end{equation}
By Lemma \ref{lem:sum of metrics}, $\{v_n\}_{n\in\N}$ is a Cauchy sequence. Thus since $Y$ is a complete metric space, there exists $u^*(x)\in Y$ such that
\begin{equation} 
\label{eq:limit of rho vn, u*}
\lim\limits_{n\to\infty}\rho(v_n,u^*(x))=0.
\end{equation}
From \eqref{eq:def-D for general rytryjyjsumjjjkjjry1}, we have
\begin{equation}
\label{eq:average limit for vn}
\lim\limits_{n\to\infty}\fint_{B_{\e_n}(x)}
\rho\left(u(z),v_n\right)^q\, d\mu(z)=0.
\end{equation}
By the triangle inequality and raising to the power $q$, we get for $\mu$--almost every $z\in X$
\begin{equation}
\label{eq:triangle inequality for rho with u*}
\rho\left(u(z),u^*(x)\right)^q\leq C'_q\Big(\rho\left(u(z),v_n\right)^q+ \rho\left(v_n,u^*(x)\right)^q\Big).
\end{equation} 
Therefore, by \eqref{eq:limit of rho vn, u*},\eqref{eq:average limit for vn}, and \eqref{eq:triangle inequality for rho with u*}, we obtain \eqref{eq:LP outside D alpha}.

Now we prove that $\Lambda^{s-rq,\beta}(\mathcal{N})=0$ and $\mathcal{H}_{\dim}\mathcal{N}\leq s-rq$, where the set $\mathcal{N}$ is defined in \eqref{eq:def of N}. For every $y\in Y$ and Borel set $B\subset X$, set $H_y(B):=\int_{B}
\rho\left(u(z),y\right)^q\, d\mu(z)$.
Since $\mu$ is Ahlfors regular and using Lemma \ref{lem:D equals Dseq}, we get
\begin{multline}
\label{eq:representation of N through sequence}
\mathcal{N}=\bigcap_{y\in Y}\Set{x\in X}[\limsup\limits_{\e\to 0^+}\fint_{B_{\e}(x)}
\rho\left(u(z),y\right)^q\, d\mu(z)>0]
\\
=\bigcap_{y\in Y}\Set{x\in X}[\limsup\limits_{\e\to 0^+}\frac{1}{\e^s}H_y(B_\e(x))>0]
=\bigcap_{y\in Y}\Set{x\in X}[\limsup\limits_{n\to \infty}\frac{1}{\e_n^s}H_y(B_{\e_n}(x))>0]
\\
=\bigcap_{y\in Y}\Set{x\in X}[\limsup\limits_{n\to \infty}\fint_{B_{\e_n}(x)}
\rho\left(u(z),y\right)^q\, d\mu(z)>0].
\end{multline}
Thus, \eqref{eq:LP outside D alpha} and \eqref{eq:representation of N through sequence} give
\begin{equation}
\label{eq:N is a subset of D alpha}
X\setminus D_\alpha\subset X\setminus \mathcal{N}\quad \Longrightarrow \quad \mathcal{N}\subset D_\alpha.
\end{equation}
Therefore, by Lemma \ref{lem:varphi-Hasdorff measure of D}, since $\beta>\alpha+1$, we get 
\begin{equation}
\label{eq:logarithmic measure of N is negligible}
\Lambda^{s-rq,\beta}(\mathcal{N})\leq \Lambda^{s-rq,\beta}(D_\alpha)=0,
\end{equation}
and $\mathcal{H}_{\dim}D_\alpha\leq s-rq$.
By \eqref{eq:N is a subset of D alpha}, we get $\mathcal{H}_{\dim}\mathcal{N}\leq s-rq$. This completes the proof.
\end{proof}
\begin{remark}\label{inclusion of Sobolev to Besov}
The result of Theorem~\ref{thm:varphi-Hasdorff measure of D} holds immediately for functions in the class $W^{r,q}(X,\mu;Y)$, where $0<r<1$ and $0<q<\infty$, because $W^{r,q}(X,\mu;Y)\subset B^r_{q,\infty}(X,\mu;Y)$. This inclusion follows from the Ahlfors regularity of $\mu$:
\begin{multline}
\label{eq:Sobolev inside Besov}
\int_X \fint_{B_t(x)}
\frac{\rho(u(x),u(z))^q}{t^{rq}}
\, d\mu(z)\, d\mu(x)
\leq \frac{1}{c}\int_X \int_{B_t(x)}
\frac{\rho(u(x),u(z))^q}{t^{rq+s}}
\, d\mu(z)\, d\mu(x)
\\
\leq \frac{1}{c}\int_X \int_{B_t(x)}
\frac{\rho(u(x),u(z))^q}{d(x,z)^{rq+s}}
\, d\mu(z)\, d\mu(x)
\leq \frac{1}{c}\int_X \int_{X}
\frac{\rho(u(x),u(z))^q}{d(x,z)^{rq+s}}
\, d\mu(z)\, d\mu(x).
\end{multline}
Since the right-hand side of \eqref{eq:Sobolev inside Besov} does not depend on $t$, we obtain $W^{r,q}(X,\mu;Y)\subset B^r_{q,\infty}(X,\mu;Y)$.
\end{remark}

\end{document}